\newif\ifslide
\theoremstyle{plain}
\newtheorem{theorem}{Theorem}
\newtheorem{theorem}{Theorem}[section]
\newtheorem{corollary}[theorem]{Corollary}
\newtheorem{lemma}[theorem]{Lemma}
\newtheorem{proposition}[theorem]{Proposition}
\newtheorem{definition-lemma}[theorem]{Definition-Lemma}
\newtheorem{question}[theorem]{Question}
\newtheorem{red-question}[theorem]{\textcolor{red}{Question}}
\newtheorem{conjecture}[theorem]{Conjecture}
\theoremstyle{definition}
\newtheorem{definition}[theorem]{Definition}
\newtheorem{remark}[theorem]{Remark}
\newtheorem{example}[theorem]{Example}
\DeclareMathOperator{\can}{can}
\def\ideal#1.{I_{#1}}
\def\ring#1.{\mathcal {O}_{#1}}
\def\fring#1.{\hat{\mathcal {O}}_{#1}}
\def\proj#1.{\mathbb {P}(#1)}
\def\pr #1.{\mathbb {P}^{#1}}
\def\dpr #1.{\hat{\mathbb {P}}^{#1}}
\def\af #1.{\mathbb A^{#1}}
\def\Hz #1.{\mathbb F_{#1}}
\def\Hbz #1.{\overline{\mathbb F}_{#1}}
\def\fb#1.{\underset #1 {\times}}
\def\rest#1.{\underset {\ \ring #1.} \to \otimes}
\def\au#1.{\operatorname {Aut}\,(#1)}
\def\deg#1.{\operatorname {deg } (#1)}
\def\pic#1.{\operatorname {Pic}\,(#1)}
\def\pico#1.{\operatorname{Pic}^0(#1)}
\def\picg#1.{\operatorname {Pic}^G(#1)}
\def\ner#1.{NS (#1)}
\def\rdown#1.{\llcorner#1\lrcorner}
\def\rfdown#1.{\lfloor{#1}\rfloor}
\def\rup#1.{\ulcorner{#1}\urcorner}
\def\rcup#1.{\lceil{#1}\rceil}
\def\n1#1.{\operatorname {N_1}(#1)}  
\def\cn1#1.{\overline{\operatorname {N^1}(#1)}} 
\def\cone#1.{\operatorname {NE}(#1)}     
\def\ccone#1.{\overline{\operatorname {NE}}(#1)}
\def\none#1.{\operatorname {NF}(#1)}
\def\cnone#1.{\overline{\operatorname {NF}}(#1)}
\def\mone#1.{\operatorname {NM}(#1)} 
\def\cmone#1.{\overline{\operatorname {NM}}(#1)}
\def\coef#1.{\frac{(#1-1)}{#1}}
\def\vit#1.{D_{\langle #1 \rangle}}
\def\mm#1.{\overline {M}_{0,#1}}
\def\H1#1.{H^1(#1,{\ring #1.})}
\def\ac#1.{\overline {\mathbb F}_{#1}}
\def\adj#1.{\frac {#1-1}{#1}}
\def\spn#1.{\overline{#1}}
\def\pek#1.#2.{\Cal P^{#1}(#2)}
\def\plk#1.#2.{\Cal P^{\leq #1}(#2)}
\def\ev#1.{\operatorname{ev_{#1}}}
\def\ilist#1.{{#1}_1,{#1}_2,\dots}
\def\bminv#1.{(\nu_1,s_1;\nu_2,s_2;\dots ;\nu_{#1},s_{#1};\nu_{r+1})}
\def\zinv#1.{(\nu_1,s_1;\nu_2,s_2;\dots ;\nu_{#1},s_{#1};0)}
\def\iinv#1.{(\nu_1,s_1;\nu_2,s_2;\dots ;\nu_{#1},s_{#1};\infty)}
\def\scr #1.{\mathcal #1}
\def\llist#1.#2.{{#1}_1,{#1}_2,\dots,{#1}_{#2}}
\def\ulist#1.#2.{{#1}^1,{#1}^2,\dots,{#1}^{#2}}
\def\lomitlist#1.#2.{{#1}_1,{#1}_2,\dots,\hat {{#1}_i}, \dots, {#1}_{#2}}
\def\lomitlistz#1.#2.{{#1}_0,{#1}_1,\dots,\hat {{#1}_i}, \dots, {#1}_{#2}}
\def\loc#1.#2.{\Cal O_{#1,#2}}
\def\fderiv#1.#2.{\frac {\partial #1}{\partial #2}}
\def\deriv#1.#2.{\frac {d #1}{d #2}}
\def\map#1.#2.{#1 \longrightarrow #2}
\def\rmap#1.#2.{#1 \dasharrow #2}
\def\emb#1.#2.{#1 \hookrightarrow #2}
\def\non#1.#2.{\text {Spec }#1[\epsilon]/(\epsilon)^{#2}}
\def\Hi#1.#2.{\text {Hilb}^{#1}(#2)}
\def\sym#1.#2.{\operatorname {Sym}^{#1}(#2)}
\def\Hb#1.#2.{\text {Hilb}_{#1}(#2)}
\def\Hm#1.#2.{\Hom_{#1}(#2)}
\def\prd#1.#2.{{#1}_1\cdot {#1}_2\cdots {#1}_{#2}}
\def\Bl #1.#2.{\operatorname {Bl}_{#1}#2}
\def\pl #1.#2.{#1^{\otimes #2}}
\def\mgn#1.#2.{\overline {M}_{#1,#2}}
\def\ialist#1.#2.{{#1}_1 #2 {#1}_2, #2\dots}
\def\pair#1.#2.{\langle #1, #2\rangle}
\def\vandermonde#1.#2.{\left|
\begin{matrix}
1 & 1 & 1 & \dots & 1\\
{#1}_1 & {#1}_2 & {#1}_3 & \dots & {#1}_{#2}\\
{#1}_1^2 & {#1}_2^2 & {#1}_3^2 & \dots & {#1}_{#2}^2\\
\vdots & \vdots & \vdots & \ddots & \vdots\\
{#1}_1^{#2-1} & {#1}_2^{#2-1} & {#1}_2^{#2-1} & \dots & {#1}_{#2}^{#2-1}\\
\end{matrix}
\right|
}
\def\vandermondet#1.#2.{\left|
\begin{matrix}
1 & {#1}_1   & {#1}_1^2 & \dots & {#1}_1^{#2-1}\\
1 & {#1}_2   & {#1}_2^2 & \dots & {#1}_2^{#2-1}\\
1 & {#1}_3   & {#1}_3^2 & \dots & {#1}_3^{#2-1}\\
\vdots & \vdots & \vdots & \ddots & \vdots\\
1 & {#1}_{#2}& {#1}_{#2}^2 & \dots & {#1}_{#2}^{#2-1}\\
\end{matrix}
\right|
}
\def\gr#1.#2.{\mathbb{G}(#1,#2)}
\def\alist#1.#2.#3.{{#1}_1 #2 {#1}_2 #2\dots #2 {#1}_{#3}}
\def\zlist#1.#2.#3.{#1_0 #2 #1_1 #2\dots #2 #1_{#3}}
\def\lomitlist30#1.#2.#3.{{#1}_0,{#1}_1 #2 \dots #2\hat {{#1}_i} #2\dots #2 {#1}_{#3}}
\def\lmap#1.#2.#3.{#1 \overset{#2}{\longrightarrow} #3}
\def\mes#1.#2.#3.{#1 \longrightarrow #2 \longrightarrow #3}
\def\ses#1.#2.#3.{0\longrightarrow #1 \longrightarrow #2 \longrightarrow #3 \longrightarrow 0}
\def\les#1.#2.#3.{0\longrightarrow #1 \longrightarrow #2 \longrightarrow #3}
\def\res#1.#2.#3.{#1 \longrightarrow #2 \longrightarrow #3\longrightarrow 0}
\def\Hi#1.#2.#3.{\text {Hilb}^{#1}_{#2}(#3)}
\def\ten#1.#2.#3.{#1\underset {#2}{\otimes} #3}
\def\lomitlist30#1.#2.#3.{{#1}_0 #2 {#1}_1 #2 \dots #2 \hat {{#1}_i} #2 \dots #2 {#1}_{#3}}
\def\mderiv#1.#2.#3.{\frac {d^{#3} #1}{d #2^{#3}}}
\def\Hom{\operatorname{Hom}}
\def\Proj{\operatorname{Proj}}
\def\Supp{\operatorname{Supp}}
\def\Bs{\operatorname{\mathbf B}}
\def\Exc{\operatorname{Exc}}
\def\Eff{\operatorname{Eff}}
\def\deg{\operatorname{deg}}
\def\Bir{\operatorname{Bir}}
\def\det{\operatorname{det}}
\def\Div{\operatorname{Div}}
\def\mult{\operatorname{mult}}
\def\mov{\operatorname{Mov}}
\def\rest{\operatorname{res}}
\def\bs{\operatorname{Bs}}
\def\p{\mathbb P}
\def\e{\Cal E}
\def\e1{E_1}
\def\e2{E_2}
\def\mapdown#1{\big\downarrow\rlap{$\vcenter{\hbox{$\scriptstyle#1$}}$}}
\def\mapse#1{
{\vcenter{\hbox{$\mathop{\smash{\raise1pt\hbox{$\diagdown$}\!\lower7pt
\hbox{$\searrow$}}\vphantom{p}}\limits_{#1}\vphantom{\mapdown{}}$}}}}
\def\VR#1.{height#1pt&\omit&&\omit&&\omit&&\omit&&\omit&\cr}
\def\VRT#1.{height#1pt&\omit&&\omit&\cr}
\title {On the number of minimal models of a log smooth threefold}
\date{\today}
\author{Paolo Cascini}
\address{Department of Mathematics\\
Imperial College London\\
180 Queen's Gate\\
London SW7 2AZ, UK}
\email{p.cascini@imperial.ac.uk}
\author{Vladimir Lazi\'c}
\address{Mathematisches Institut\\
Universit\"at Bayreuth\\
95440 Bayreuth\\
Germany}
\email{vladimir.lazic@uni-bayreuth.de}
\thanks{The first author was partially supported by an EPSRC Grant. The second author was supported by the DFG-Forschergruppe 790 ``Classification of Algebraic Surfaces and Compact Complex Manifolds". We would like to thank T.~Dorsch, A.-S.~Kaloghiros, Th.~Peternell and C.~Xu for useful discussions.}
\begin{document}

\begin{abstract}
We give a topological bound on the number of minimal models of a class of three dimensional log smooth pairs of general type. 
\end{abstract}

\maketitle
\tableofcontents

\section{Introduction}

The aim of this paper is to prove that the number of log terminal models of certain $3$-dimensional complex terminal projective log smooth pairs is completely determined by their underlying topology as complex manifolds. 

The Minimal Model Program predicts that a complex projective manifold $X$ which is not uniruled has a representative in its birational class with exceptional properties. In other words, if $K_X$ is pseudoeffective, we can associate to $X$ a minimal model -- a variety $Y$ birational to $X$ which admits a nef canonical divisor. Even though minimal models of smooth projective threefolds are in general not smooth nor unique, their singularities are classified \cite{Mori85,Reid85} and moreover, the first author and D.-Q. Zhang \cite{CZ12} provided a topological bound on their singularities. 

It is an important and long-standing conjecture that the number of minimal models of a smooth projective variety is finite up to isomorphism. This is known for projective varieties of general type \cite{BCHM10}, and for threefolds with positive Kodaira dimension \cite{Kawamata97a}. Furthermore, Shokurov's log geography \cite{CS11} (which uses the full Minimal Model Program) and the relative version of the Cone conjecture of Morrison and Kawamata \cite{Kawamata97a} 
imply the conjecture in general, see Theorem \ref{t_finiteness}.

This paper represents the first attempt to bound the number of minimal models of a given log smooth pair of dimension $3$ with respect to the underlying topology as a complex manifold. Our main result is the following.

\begin{theorem}\label{t_main}
Let $p$ and $\rho$ be positive integers, and let $\varepsilon$ be a positive rational number. Let $(X,\sum_{i=1}^p S_i)$ be a $3$-dimensional log smooth pair such that:
\begin{enumerate}
\item[(i)] $X$ is not uniruled,
\item[(ii)] $S_1,\dots,S_p$ are distinct prime divisor which are not contained in $\Bs(K_X+\sum_{i=1}^pa_i S_i)$ for all $0\leq a_i\leq 1$, 
\item[(iii)] the divisors $S_i$ span $\Div_{\mathbb R}(X)$ up to numerical equivalence, 
\item[(iv)] $\rho(X)\le \rho$ and $\rho(S_i)\le \rho$ for all $i=1,\dots,p$.
\end{enumerate}
Let $I$ be the total number of irreducible components of intersections of each two and each three of the divisors $S_1,\dots,S_p$. 

There exists a constant $C$ that depends only on $p$, $\rho$, $\varepsilon$ and $I$ such that for any $\Delta=\sum_{i=1}^p \delta_i S_i$ with $\delta_i\in [\varepsilon, 1-\varepsilon]$ and $(X,\Delta)$ terminal, the number of log terminal models of $(X,\Delta)$ is at most $C$. 
\end{theorem}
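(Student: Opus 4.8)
The plan is to reduce the statement, via Shokurov's log geography, to bounding the number of chambers in a polyhedral decomposition of the polytope of admissible boundaries, and then to bound that number of chambers by controlling the walls between them using the topological bound on singularities of \cite{CZ12}; the hypotheses (i)--(iv) and the datum $I$ enter precisely to turn that topological input into a function of $p$, $\rho$, $I$ alone. Concretely, let $\mathcal P\subset\mathbb R^{p}$ be the (open, rational polyhedral) set of tuples $(\delta_1,\dots,\delta_p)$ with $\delta_i\in(\varepsilon/2,1-\varepsilon/2)$ for which $(X,\sum\delta_iS_i)$ is terminal; by log smoothness this is cut out by the inequalities $\delta_i+\delta_j<1$ over incident pairs $S_i\cap S_j\ne\emptyset$, and every $\Delta$ as in the statement lies in it. By \cite{BCHM10} together with Theorem \ref{t_finiteness}, there is a finite decomposition of $\overline{\mathcal P}$ into rational polytopes such that the set of log terminal models of $(X,\sum\delta_iS_i)$ is constant on the interior of each piece. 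A perturbation of $\Delta$ into the interior of the nef cone of a given model then shows that every log terminal model of a fixed pair $(X,\Delta_0)$, $\Delta_0\in\mathcal P$, occurs as a model of one of these pieces; hence the number of log terminal models of $(X,\Delta_0)$ is at most the number of pieces, and it is enough to bound the latter by a constant depending only on $p,\rho,\varepsilon,I$.

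\emph{From chambers to walls, and from walls to intersection numbers.} The walls of the decomposition lie on affine hyperplanes of $\mathbb R^p$: a wall appears exactly where, for some $\mathbb Q$-factorial terminal model $Y$ occurring in the process and some curve $C$ spanning an extremal ray of $\overline{NE}(Y)$, the divisor $K_Y+\sum\delta_iS_{i,Y}$ becomes trivial on $C$. By condition (ii) none of the $S_i$ is contracted on the way to $Y$, so $K_Y\equiv\sum_i\lambda_iS_{i,Y}$ with the $\lambda_i$ expressing $K_X\equiv\sum_i\lambda_iS_i$, and the wall lies on
\[
H_{Y,C}\;=\;\Big\{\,(\delta_1,\dots,\delta_p)\;:\;\sum_{i=1}^{p}(\lambda_i+\delta_i)\,(S_{i,Y}\cdot C)=0\,\Big\}.
\]
Since $n$ affine hyperplanes cut $\mathbb R^p$ into $O(n^p)$ regions and $p$ is fixed, it suffices to bound the number of distinct hyperplanes $H_{Y,C}$ — equivalently, the number of distinct projective tuples $[\,S_{1,Y}\cdot C:\dots:S_{p,Y}\cdot C\,]$ arising this way.

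\emph{The crux: boundedness of the intersection numbers.} This last point is the main obstacle, and it is where the topology of $X$ enters. First I would verify that (i)--(iv) force the topological invariants of $X$ appearing in \cite{CZ12} to depend only on $p,\rho,I$: condition (iii) expresses $K_X$ and each $K_{S_i}$ as combinations of the $S_i$, log smoothness together with $I$ pins down the relevant intersection numbers on $X$ and on the $S_i$, and (iv) bounds $\rho(X)$ and $\rho(S_i)$. Then \cite{CZ12} bounds the number and analytic types of the singularities of every log terminal model, and hence the Gorenstein index of each $Y$ and the Cartier index of each Weil divisor $S_{i,Y}$; this bounds the denominators of the numbers $S_{i,Y}\cdot C$. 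To bound their absolute values I would use that $\mathcal P$ is relatively compact and bounded away from the faces $\{\delta_i=1\}$, which gives a uniform lower bound on the log discrepancies of $(X,\Delta)$; combined with the bounded singularities and bounded Picard numbers, this should place all the models $Y$ in a single bounded family, inside which the degrees of extremal curves against the fixed divisors $S_{i,Y}$ are uniformly bounded. Bounded denominators and bounded absolute values leave only finitely many tuples $[\,S_{1,Y}\cdot C:\dots:S_{p,Y}\cdot C\,]$, hence finitely many walls and finitely many chambers, all bounded by $p,\rho,\varepsilon,I$. The two points I expect to require real work are extracting the exact topological quantities needed by \cite{CZ12} from (i)--(iv), and promoting ``bounded singularities'' to genuine boundedness of the family of minimal models — it is exactly here that $\varepsilon$, which keeps all the pairs uniformly terminal, is indispensable.
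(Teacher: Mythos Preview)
Your reduction to counting chambers in Shokurov's log geography is the same as the paper's, and identifying the walls with hyperplanes is also on target. The proposal diverges from the paper precisely at the step you yourself flag as the hard one, and there the argument has a genuine gap.

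\medskip

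\textbf{The gap.} You want to bound the numbers $S_{i,Y}\cdot C$ by first invoking \cite{CZ12} to bound the singularities of each model $Y$, and then ``promoting bounded singularities to genuine boundedness of the family of minimal models''. That promotion is not available: bounded analytic singularity types do not by themselves place the $\mathbb Q$-factorial terminal threefolds $Y$ in a bounded family, and nothing in the hypotheses gives you a polarisation of bounded degree on $Y$. Even if one had such boundedness, it would control numbers like $(-K_Y)\cdot C$, not $S_{i,Y}\cdot C$: the divisors $S_{i,Y}$ are strict transforms under a birational map that itself varies over the chambers, so they are not ``fixed'' in any family of $Y$'s. There is also a smaller issue earlier: to apply \cite{CZ12} you need the Betti numbers of $X$, and it is not clear that (i)--(iv) together with $I$ determine $b_3(X)$. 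In short, the route through boundedness of models is not one you can close with the tools at hand.

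\medskip

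\textbf{What the paper does instead.} The paper never attempts to bound the family of models. It separates the walls into two kinds. Walls across which the set of contracted divisors changes are handled by a purely convex--geometric lemma (Lemma~\ref{lemma:polytopes}/\ref{lemma:polytopesBase}): the regions $\mathcal B_\nu=\{B:\text{exactly the }F_i,\ i\in\nu,\text{ are contracted}\}$ satisfy a monotonicity property \eqref{eq:polytopes} that forces only boundedly many adjacent-connected pieces. Inside a fixed $\mathcal B_\nu$ the remaining walls come from \emph{flips}, and here the control is via discrepancies rather than intersection numbers. If $X'\dashrightarrow X''$ is a flip and $C\subset X''$ is a flipped curve, then the exceptional divisor $E$ of the blow-up of $C$ satisfies $a(E,X,B)<1$; since $(X,B)$ is log smooth terminal, Lemma~\ref{l_echo} shows there are only boundedly many such valuations $E$, with centres among boundedly many curves in $\bigcup_B\Bs_+(K_X+B)\cap S$ (Lemmas~\ref{l_corner} and \ref{l_valuation}, which is where $\rho(X)$, $\rho(S_i)$, $\varepsilon$ and $I$ enter). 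The wall through $B$ is then the locus where $a(E,X',B')=a(E,X'',B'')$, and by Lemma~\ref{lem:discrepbound} this is an \emph{integral} affine equation in the multiplicities $\mult_E S_i'$, $\mult_E S_i''$, with all entries bounded by $\mu/\varepsilon$ for an explicit $\mu=\mu(p,\rho,\varepsilon,I)$. That gives the finite hyperplane arrangement directly, with no appeal to boundedness of families or to \cite{CZ12}.
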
 

The proof is an easy consequence of our main technical result, Theorem \ref{l_disconnected} below.

Now, say that two log smooth pairs $(X_1,\Delta_1)$ and $(X_2,\Delta_2)$ are of the same \emph{topological type} is there is a homeomorphism $\varphi\colon \map X_1.X_2.$ which is a homeomophism between $\Supp\Delta_1$ and $\Supp\Delta_2$. Then we have the following corollary.

\begin{corollary}\label{corollary}
Let $\varepsilon$ be a positive number. Let $\mathfrak X$ be the collection of all log smooth $3$-fold terminal pairs $(X,\Delta=\sum_{i=1}^p\delta_iS_i)$ such that $X$ is not uniruled, $\varepsilon\leq\delta_i\leq1-\varepsilon$ for all $i$, $S_1,\dots,S_p$ are distinct prime divisor not contained in $\Bs(K_X+\sum_{i=1}^pa_i S_i)$ for all $0\leq a_i\leq 1$, and $S_i$ span $\Div_{\mathbb R}(X)$ up to numerical equivalence. 

Then for every $(X_0,\Delta_0)\in\mathfrak{X}$ there exists a constant $N$ such that for every $(X,\Delta)\in\mathfrak{X}$ of the topological type as $(X_0,\Delta_0)$, the number of log terminal models of $(X,\Delta)$ is bounded by $N$.
\end{corollary}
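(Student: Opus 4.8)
The plan is to deduce the corollary from Theorem \ref{t_main} by showing that the hypotheses of that theorem are encoded in the topological type of $(X_0,\Delta_0)$. Fix $(X_0,\Delta_0)\in\mathfrak X$ and let $\varphi\colon X\to X_0$ be a homeomorphism restricting to a homeomorphism $\Supp\Delta\to\Supp\Delta_0$, with $(X,\Delta)\in\mathfrak X$. First I would record that the integer $p$ — the number of prime components of $\Delta$ — is a topological invariant: $\varphi$ carries the decomposition of $\Supp\Delta$ into its irreducible components (equivalently, into the closures of the connected components of the complement of the pairwise intersection locus, since the pair is log smooth) bijectively to that of $\Supp\Delta_0$, so $p$ is the same for $(X,\Delta)$ and $(X_0,\Delta_0)$. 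The same argument applied to the strata of the simple normal crossings divisor $\sum S_i$ shows that the number $I$ of irreducible components of the double and triple intersections $S_i\cap S_j$ and $S_i\cap S_j\cap S_k$ is also determined by the topological type, since these strata and their components are preserved by $\varphi$.

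Next I would control the Picard numbers. Since $X$ and $S_i$ are smooth projective, $\rho(X)=\dim_{\mathbb R} N^1(X)_{\mathbb R}$ is bounded above by the second Betti number $b_2(X)$, which is a topological invariant and hence equals $b_2(X_0)$; likewise $\rho(S_i)\le b_2(S_i)=b_2(\varphi(S_i))$. Thus setting $\rho:=\max\{b_2(X_0),\, b_2(S_i^0)\}$, where $S_i^0$ are the components of $\Delta_0$, gives a uniform bound $\rho(X)\le\rho$ and $\rho(S_i)\le\rho$ valid for every $(X,\Delta)\in\mathfrak X$ of the same topological type. The remaining hypotheses (i), (ii) and (iii) of Theorem \ref{t_main} are built into the definition of $\mathfrak X$, and the bound $\varepsilon\le\delta_i\le 1-\varepsilon$ is as well. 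Therefore $(X,\Delta)$ satisfies all the hypotheses of Theorem \ref{t_main} with the parameters $p$, $\rho$, $\varepsilon$, $I$ just described, so the number of log terminal models of $(X,\Delta)$ is at most the constant $C=C(p,\rho,\varepsilon,I)$ produced by that theorem. Since $p$, $\rho$ and $I$ depend only on $(X_0,\Delta_0)$ (and $\varepsilon$ is fixed), we may take $N:=C$, which depends only on $(X_0,\Delta_0)$, completing the proof.

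The only real point requiring care — and the step I would expect to need the most attention — is the claim that a homeomorphism of pairs respects the combinatorial structure of the snc divisor $\sum S_i$, i.e. that it sends components to components and strata to strata so that $p$ and $I$ are genuinely invariant. This is where log smoothness is used: each $S_i$ is a topological manifold, the singular locus of $\Supp\Delta$ as a topological space is exactly the union of the pairwise intersections, its singular locus in turn is the union of the triple intersections, and these loci are preserved by any homeomorphism of the pair; peeling off strata dimension by dimension recovers the components of $\Delta$ and of the multiple intersections. Once this is granted the rest is bookkeeping, feeding topologically determined parameters into Theorem \ref{t_main}.
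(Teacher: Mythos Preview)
Your proposal is correct and follows essentially the same approach as the paper's own proof, which simply observes that the number $I$ of irreducible components of the double and triple intersections is preserved under a homeomorphism of pairs and then invokes Theorem \ref{l_disconnected} (equivalently Theorem \ref{t_main}). Your version is in fact more thorough: you make explicit that $p$ is a topological invariant and that $\rho(X)$ and $\rho(S_i)$ are bounded by the Betti numbers $b_2(X_0)$ and $b_2(S_i^0)$, points the paper leaves implicit.
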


These results lend strong support to a conjecture that the number of minimal models of a smooth projective threefold is completely determined only by its topology. This belief also has roots in other results in the field. According to philosophy starting with \cite{Kollar86}, vanishing and injectivity theorems in cohomology hold due to topological reasons, and Koll\'ar's effective basepoint freeness gives bounds that depend only on the dimension, see also the related Conjecture \ref{con1} below. The finite generation of adjoint rings can be proved only by using the Kawamata-Viehweg vanishing \cite{CL10a}, and the number of minimal models of a given pair is closely related to these rings \cite{CorLaz12,KKL12}.

In the proofs we use the full force of the $3$-dimensional MMP. Our main tools are Shokurov's log geography \cite{Shokurov96} and the techniques involved in the proof of termination of $3$-fold flips. The log geography has played an important role in studying the birational geometry of projective varieties: for instance, it was recently used to prove the Sarkisov Program for Mori fibre spaces \cite{HM10b}. We believe that a more accurate study of Fano threefolds combined with the results of this paper will give a new insight on the classification of Fano threefolds \cite{Cor09}. Furthermore, our results combined with the Cone conjecture suggest that there is a topological bound on the number of faces of the fundamental domain of the action of the group of birational automorphisms on the movable cone of a Calabi-Yau manifold; in particular, this relates to recent results on the Cone conjecture for Calabi-Yau log surface pairs \cite{Totaro10}.

\medskip 

Thus, our results are related to the following question:
\begin{question} \label{question}
Let $(X,\sum S_i)$ be a log smooth projective variety of dimension $n$, where $S_1,\dots,S_p$  are distinct prime divisors, and let $\varepsilon>0$ be a rational number.  

Does there exist a constant $M$ depending only on the topology of $X,S_1,\dots,S_p$ and on $\varepsilon$ such that the number of log terminal models of $(X,B)$, where the coefficients of $B$ lie in the interval $[\varepsilon,1-\varepsilon]$, is bounded by $M$?
\end{question}

Note that this number is expected to be finite, cf.\ Theorem \ref{t_shokurov}. We give a positive answer to Question \ref{question} in the case of  surfaces in Theorem \ref{t_su} below, and this provides an effective version of the finite generation of adjoint rings on surfaces, cf.\ Theorem \ref{t_efgs}, which generalises some of the results from \cite{CZ12}. Furthermore, our main result, Theorem \ref{l_disconnected}, gives a positive answer to Question \ref{question} in the case of non-uniruled terminal threefold pairs $(X,B)$ such that the prime divisors contained in the support of $B$ span $\Div_{\mathbb R}(X)$ up to numerical equivalence.
\section{Preliminary results}

We work over the field of complex numbers $\mathbb C$. The size of a set $S$ is denoted by $\#S$. We denote by $\mathbb R_+$ and $\mathbb Q_+$ the sets of non-negative real and rational numbers. The notation $N=N(a_1,\dots,a_k)$ means that the constant $N$ depends only on the parameters $a_1,\dots,a_k$.

\subsection{Divisors, valuations and models}
We first recall some standard definitions \cite{KM98,Lazarsfeld04,BCHM10}.

Let $X$ be a normal projective variety and $\mathbf R\in \{\mathbb Z,\mathbb Q,\mathbb R\}$. We denote by $\Div_{\mathbf R}(X)$ the group of $\mathbf R$-divisors on $X$, and $\sim_{\mathbf R}$ denotes $\mathbf R$-linear equivalence of $\mathbb R$-divisors. If $A=\sum a_iC_i$ is an $\mathbb{R}$-divisor on $X$, then $\rfdown A.=\sum \rfdown a_i.C_i$ is the round-down of $A$. 

If $X$ is a normal projective variety and if $D$ is an integral divisor on $X$, we denote by $\bs|D|$ the base locus of $D$. If $S$ is a prime divisor on $X$ such that $S\nsubseteq\bs|D|$, then $|D|_S$ denotes the image of the linear system $|D|$ under restriction to $S$. If $D$ is an $\mathbb R$-divisor on $X$, we denote
$$\Bs(D)=\bigcap_{D\sim_\mathbb R D'\geq0}\Supp D',$$
and we call $\Bs(D)$ the {\em stable base locus} of $D$. If $A$ is any ample divisor on $X$, then $\Bs_+(D)=\bigcap_{\varepsilon>0}\Bs(D-\varepsilon A)$ is the \emph{augmented base locus} of $D$, and we clearly have $\Bs(D)\subseteq\Bs_+(D)$.

A {\em log pair} $(X, \Delta)$ consists of a normal variety $X$ and an $\mathbb R$-divisor $\Delta\ge 0$ such that $K_X+\Delta$ is $\mathbb R$-Cartier. We say that $(X,\Delta)$ is {\em log smooth} if $X$ is smooth and $\Supp\Delta$ has simple normal crossings. A projective birational morphism $f\colon\map Y.X.$ is a {\em log resolution} of the pair $(X, \Delta)$ if $Y$ is smooth, $\Exc f$ is a divisor and the support of $f_*^{-1}\Delta+\Exc f$ has simple normal crossings. A birational map $f\colon \rmap X.Y.$ between normal projective varieties is a \emph{contraction} if $f$ does not extract a divisor.

Let $f\colon \rmap X.Y.$ be a birational contraction between normal projective varieties and let $D$ be an $\mathbb R$-Cartier divisor on $X$ such that $D_Y=f_*D$ is also $\mathbb R$-Cartier. Then $f$ is 
$D$-\emph{nonpositive} (respectively $D$-\emph{negative}) if for some resolution $(p,q)\colon \map W.X\times Y.$ of $f$, we may write
$$p^*D=q^*D_Y+E,$$
where $E\ge 0$ is $q$-exceptional (respectively $E\ge 0$ is $q$-exceptional and $\Supp E$ contains the strict transforms of all $f$-exceptional divisors).     

A {\em geometric valuation\/} $\Gamma$ on a normal variety $X$ is a valuation on $k(X)$ given by the order of vanishing at the generic point of a prime divisor on some birational model $f\colon Y\to X$. If $D$ is an $\mathbb R$-Cartier divisor on $X$, by abusing notation we use $\mult_\Gamma D$ to denote $\mult_\Gamma f^*D$. In this paper, we do not distinguish between a prime divisor on a birational model of $X$ and the corresponding geometric valuation. In this notation, if $\Gamma$ is a prime divisor on $Y$, then $c_X(\Gamma)=f(\Gamma)$ is the \emph{centre} of $\Gamma$ on $X$.

Given a log pair $(X,\Delta)$ and a geometric valuation $\Gamma$, let $f\colon\map Y.X.$ be the birational morphism such that $\Gamma$ is a divisor on $Y$. The {\em discrepancy} $a(\Gamma,X,\Delta)$ of $(X,\Delta)$ with respect to $\Gamma$ is the coefficient of $\Gamma$ in the divisor $K_Y-f^*(K_X+\Delta)$. Discrepancies are used to define singularities of pairs that we use in this paper (canonical, terminal, klt, log canonical, plt), see \cite{KM98}.

\medskip 

We will use the following lemma  in Section \ref{sec:threefolds}.

\begin{lemma}\label{lem:discrepbound}
Let $(X,\sum_{i=1}^p b_iS_i)$ be a log smooth terminal threefold pair, where $S_1,\dots,S_p$ are distinct 
prime divisors. Let
$$f\colon \rmap X.X'.$$
be a birational contraction to a  terminal threefold $X'$. Let $S_i'$ be the proper transform of $S_i$ in $X'$ for 
every $i$. Let $Y$ be a smooth variety, let $g\colon Y\longrightarrow X$ be a birational morphism, and let $E\subseteq Y$ be an $(f\circ 
g)$-exceptional prime divisor such that the centre of $E$ on $X'$ is a curve. Then
\begin{equation}\label{e_a}
a\Big(E,X',\sum_{i=1}^pb_iS_i'\Big)=a(E,X',0)-\sum_{i=1}^pb_i\mult_E S_i',
\end{equation}
where $0<a(E,X',0)<\rho(Y/X')$.
\end{lemma}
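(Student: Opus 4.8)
The plan is to split the identity \eqref{e_a} into an essentially formal part and an inequality part, and to attack each separately. For the formal identity, I would pull everything back to the smooth model $Y$. Choose a common resolution $(p,q)\colon W\to X\times X'$ of $f$ through which $g$ factors (or, more simply, work directly on $Y$ after enlarging it so that $E$ appears and $f\circ g$ is a morphism to $X'$); write $h = f\circ g\colon Y\to X'$. Since $X'$ is terminal and $f$ is a birational contraction, the proper transforms $S_i'$ are $\mathbb R$-Cartier, and one has on $Y$ the two relations
\[
K_Y = h^*K_{X'} + \sum_F a(F,X',0)\, F,\qquad
h^*S_i' = \widetilde{S_i} + \sum_F (\mult_F S_i')\, F,
\]
where the sums run over $h$-exceptional prime divisors $F$ and $\widetilde{S_i}$ denotes the proper transform of $S_i'$ on $Y$ (equivalently of $S_i$, since $g$ is a morphism and $f$ a contraction). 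Subtracting $\sum_i b_i$ times the second relation from the first and reading off the coefficient of the particular divisor $E$ — which is $h$-exceptional by hypothesis and which does not appear in any $\widetilde{S_i}$ — gives exactly
\[
a\Big(E,X',\sum_{i=1}^p b_i S_i'\Big) = a(E,X',0) - \sum_{i=1}^p b_i \,\mult_E S_i'.
\]
This step is routine bookkeeping with discrepancies once one is careful that $E$ is not a component of $h^{-1}_*\big(\sum b_i S_i'\big)$.

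The substantive part is the two-sided bound $0 < a(E,X',0) < \rho(Y/X')$. The left inequality is immediate: $X'$ is terminal, so every exceptional discrepancy over $X'$ is strictly positive; in particular $a(E,X',0) > 0$. For the upper bound I would use that the centre of $E$ on $X'$ is a \emph{curve} (not a point), together with the fact that $X'$ is a terminal threefold, so by the classification of terminal threefold singularities \cite{Mori85,Reid85} the singular locus of $X'$ is a finite set of points; hence $X'$ is smooth at the generic point of the curve $Z = c_{X'}(E)$. Over a neighbourhood of the generic point of $Z$, resolving and extracting divisors whose centre dominates $Z$ is like blowing up a smooth surface inside a smooth threefold, so the discrepancy of $E$ is bounded by the number of successive blow-ups needed, which in turn is controlled by the relative Picard number $\rho(Y/X')$. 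More precisely: run an argument by induction on $\rho(Y/X')$, factoring $h$ (over the generic point of $Z$) through a single smooth blow-up $Y_1 \to X'$ along a curve; the strict transform of $E$ on $Y_1$ has discrepancy over $X'$ equal to $a(E, Y_1, 0) + 1$ if $E \ne $ the new exceptional divisor, and $a(E,X',0) = 1$ in the base case when $E$ is extracted by the first blow-up. Since each blow-up drops $\rho(\,\cdot\,/X')$ by one and adds at most $1$ to the discrepancy, we get $a(E,X',0) \le \rho(Y/X') - 1 < \rho(Y/X')$, with the strict inequality coming from $\rho(Y/X') \ge 1$.

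I expect the main obstacle to be making the "blow-up in codimension two" reduction rigorous: one must pass to the local ring at the generic point of $Z$, where $X'$ is regular of dimension $3$, argue that any divisorial valuation with centre $Z$ factors (after localization) through a sequence of blow-ups of smooth centres dominating the generic point of $Z$, and keep track of how the relative Picard number of the successive models over $X'$ controls the length of that sequence — while also checking that localizing does not change $a(E,X',0)$ or decrease $\rho(Y/X')$. An alternative, cleaner route for the upper bound is to invoke the known structure of toroidal/divisorial extractions over a smooth point of a curve directly, or to cite the standard fact that for a smooth variety $X'$ and a geometric valuation $E$ with centre of codimension $c$, one has $a(E,X',0) \ge c - 1$ with the "excess" bounded linearly by the number of blow-ups, hence by $\rho(Y/X')$; but in any presentation this codimension-two local analysis is the crux, and everything else is formal.
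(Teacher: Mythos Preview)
Your treatment of the identity \eqref{e_a} and of the strict positivity $a(E,X',0)>0$ matches the paper's one-line dismissal; the paper additionally notes that $a(E,X',0)$ is a positive integer, since a terminal threefold is smooth along any curve. For the upper bound the paper takes a different route from your localization: it slices with a general ample surface $T\subseteq X'$ meeting $Z=c_{X'}(E)$ in a smooth point, lets $W\subseteq Y$ be the proper transform, uses adjunction on a common resolution to show $a(E,X',0)=a(W\cap E,T,0)$, and then argues that this surface discrepancy is bounded by $\rho(W/T)$, which in turn is at most the number of $(f\circ g)$-exceptional divisors on $Y$ and hence at most $\rho(Y/X')$.

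There is, however, a genuine gap in your inductive step. You assert that factoring through one blow-up $Y_1\to X'$ gives $a(E,X',0)=a(E,Y_1,0)+1$, so that each blow-up adds at most $1$ to the discrepancy. The correct relation is
\[
a(E,X',0)=a(E,Y_1,0)+\mult_E F_1,
\]
where $F_1$ is the first exceptional divisor, and $\mult_E F_1$ need not equal $1$. Over a smooth surface germ, for instance, blow up a point to get $E_1$ (discrepancy $1$), then a point on $E_1$ to get $E_2$ (discrepancy $2$), then the node $E_1\cap E_2$: the resulting $E_3$ has discrepancy $1+1+2=4$, while only three blow-ups were used. The identical phenomenon occurs for blow-ups of curves on a threefold whose centres dominate $Z$. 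Thus the induction does not close as written, and your argument does not establish $a(E,X',0)<\rho(Y/X')$.
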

\begin{proof}
We easily calculate \eqref{e_a}
and note that $a(E,X',0)$ is a positive integer since $X'$ is terminal. Let $T\subseteq X'$ be a general ample surface, and let $W$ be its proper transform on $Y$. After possibly replacing $X$ with a smaller open subset of $X$, we may assume that $T\cap c_{X'}(E)$ is a smooth point of $X'$. 
Then the induced map $W\longrightarrow T$ is a birational morphism and $W$ is obtained from $T$ by blowing up $\rho(W/T)$ times. 

Let $(p,q)\colon Z\longrightarrow Y\times X'$ be a resolution of $f\circ g$. Then since $T$ is general we have $T':=q^*T=q^{-1}_*T$, and hence
$$K_Z+T'=q^*(K_{X'}+T)+\Gamma$$
for some $q$-exceptional  divisor $\Gamma\geq0$. Restricting this equality to $T'$ and pushing forward to $X$, we obtain $a(E,X',0)=a(W\cap E,T,0)$. Since $T\cap c_{X'}(E)$ is smooth, it is easy to see from the discrepancy formulas that $a(W\cap E,T,0)\leq\rho(W/T)$. Finally, observe that since $T$ is general, $\rho(W/T)$ is bounded by the number of $(f\circ g)$-exceptional divisors on $Y$, hence it is bounded by $\rho(Y/X')$. 
\end{proof}

\begin{lemma}\label{l_canonical}
Let $(X,\Delta)$ be a canonical projective pair, and let $f\colon \rmap X.Y.$ be a $(K_X+\Delta)$-nonpositive birational contraction. Assume that $f$ does not contract any component of $\Delta$, and let $\Delta_Y=f_*\Delta$. 

Then $(Y,\Delta_Y)$ is canonical. Additionally, if $f$ is $(K_X+\Delta)$-negative and $(X,\Delta)$ is terminal, then $(Y,\Delta_Y)$ is terminal. 
\end{lemma}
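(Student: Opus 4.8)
The plan is to compare the discrepancies of $(X,\Delta)$ and $(Y,\Delta_Y)$ on a common resolution. First I would check that $(Y,\Delta_Y)$ is a genuine log pair: $\Delta_Y=f_*\Delta\ge 0$ since $f_*$ takes effective divisors to effective divisors, and $K_Y+\Delta_Y=f_*(K_X+\Delta)$ is $\mathbb R$-Cartier because this is built into the definition of $(K_X+\Delta)$-nonpositivity (and $f_*K_X=K_Y$ as $f$ extracts no divisors). Since canonicity (resp.\ terminality) of $(Y,\Delta_Y)$ is equivalent to the inequality $a(\Gamma,Y,\Delta_Y)\ge 0$ (resp.\ $>0$) for every geometric valuation $\Gamma$ whose centre on $Y$ has codimension at least $2$, it suffices to fix one such $\Gamma$ and prove the corresponding inequality.

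Next I would choose a resolution $(p,q)\colon\map W.X\times Y.$ of $f$ on which $\Gamma$ is a prime divisor, both $p$ and $q$ are morphisms, and which dominates the resolution furnished by the definition of nonpositivity, so that
$$p^*(K_X+\Delta)=q^*(K_Y+\Delta_Y)+E$$
with $E\ge 0$ and $E$ $q$-exceptional; if in addition $f$ is $(K_X+\Delta)$-negative, then $\Supp E$ contains the strict transforms on $W$ of all $f$-exceptional divisors. Writing the discrepancy expansions of $K_W$ with respect to $(X,\Delta)$ and to $(Y,\Delta_Y)$ and subtracting, using the displayed relation, yields for every prime divisor $\Gamma_i$ of $W$ the identity
$$a(\Gamma_i,Y,\Delta_Y)=a(\Gamma_i,X,\Delta)+\mult_{\Gamma_i}E.$$

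The conclusion then follows from a short case analysis applied to $\Gamma$. If the centre of $\Gamma$ on $X$ has codimension at least $2$, then $a(\Gamma,X,\Delta)\ge 0$ because $(X,\Delta)$ is canonical (resp.\ $>0$ if it is terminal), and since $\mult_\Gamma E\ge 0$ the desired inequality for $a(\Gamma,Y,\Delta_Y)$ holds. If instead the centre of $\Gamma$ on $X$ is a prime divisor $D$, then $\Gamma$ is the strict transform of $D$, and since the centre of $\Gamma$ on $Y$ has codimension at least $2$, the map $f$ contracts $D$; by hypothesis $D$ is not a component of $\Delta$, so $a(\Gamma,X,\Delta)=-\mult_D\Delta=0$, whence $a(\Gamma,Y,\Delta_Y)=\mult_\Gamma E\ge 0$, giving canonicity. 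For the terminal statement, $f$ is moreover $(K_X+\Delta)$-negative, so $\Gamma$, being the strict transform of the $f$-exceptional divisor $D$, lies in $\Supp E$; hence $\mult_\Gamma E>0$ and $a(\Gamma,Y,\Delta_Y)>0$.

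The argument is essentially bookkeeping. The step I expect to require the most care is the divisorial case above, where one must simultaneously use the hypothesis that $f$ does not contract any component of $\Delta$ (to see that $a(\Gamma,X,\Delta)$ vanishes) and, in the terminal case, the precise form of $(K_X+\Delta)$-negativity (to make $\mult_\Gamma E$ strictly positive); one should also take a little care that the relation $p^*(K_X+\Delta)=q^*(K_Y+\Delta_Y)+E$ together with the support condition on $E$ persists after passing to the higher model $W$ on which $\Gamma$ becomes a divisor.
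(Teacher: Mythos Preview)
Your proof is correct and is precisely the standard discrepancy comparison that the paper has in mind; the paper itself only says ``This follows easily from the definitions'' and gives no further argument. Your careful handling of the divisorial case and of the persistence of the support condition on $E$ under passing to a higher resolution is exactly what is needed to unpack that sentence.
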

\begin{proof}
This follows easily from the definitions.
\end{proof}

The following result is inspired by \cite[Proposition 2.36]{KM98} and \cite[Lemma 1.5]{AHK06}.

\begin{lemma}\label{l_echo}
Let $(X,\Delta=\sum_{i=1}^p a_i S_i)$ be a  $3$-dimensional log smooth terminal pair with $0<a_i<1$, and let $Z\subseteq \sum_{i=1}^p S_i$ be a union of $m$ curves. Let $I$ be the total number of points of intersection of each three of the divisors $S_1,\dots,S_p$.

Then there exists a constant $N=N(m,p,a_1,\dots,a_p,I)$ such that the number of geometric valuations $E$ on $X$ with $c_X(E)\subseteq Z$ and $a(E,X,\Delta)<1$ is bounded by $N$. Furthermore, the number of smooth blow-ups needed to realise the valuations is bounded by $N$.
\end{lemma}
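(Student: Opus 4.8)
The statement is a finiteness result for valuations with small discrepancy over a log smooth terminal pair, with centres confined to a union of curves inside the boundary support. The plan is to reduce the computation of discrepancies to blow-ups of smooth points and smooth curves on surfaces, and then to run an explicit induction on the number of such blow-ups. First I would argue that any geometric valuation $E$ with $c_X(E)\subseteq Z$ and $a(E,X,\Delta)<1$ can be realised on a model $Y\to X$ obtained by a sequence of blow-ups along smooth centres lying over $Z$; this is standard (resolution of valuations), and the point of the lemma is to bound how long that sequence must be and how many distinct such $E$ appear.

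The key computation is a discrepancy estimate. Writing $\Delta=\sum a_iS_i$, a single blow-up of $X$ along a smooth centre $W$ with exceptional divisor $F$ gives
\[
a(F,X,\Delta)=\codim W-1-\sum_{i=1}^p a_i\,\mult_W S_i.
\]
If $\codim W=2$ (a curve), then $a(F,X,\Delta)\le 1-\sum a_i\mult_W S_i$, and since $c_X(E)\subseteq Z\subseteq \Supp\Delta$, such a curve $W$ lies in at least one $S_i$, so this discrepancy is bounded away from $1$ — at most $1-\min_i a_i$ if $W$ meets a single boundary component, and strictly smaller if $W$ lies in an intersection $S_i\cap S_j$. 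More importantly, after one blow-up the coefficient of the new boundary component $F$ in the log pull-back of $(X,\Delta)$ is $\sum a_i\mult_W S_i - (\codim W -1) + (\text{coefficient correction})$, and I would track this through the process: each blow-up either strictly increases the sum of boundary coefficients along the exceptional locus, or forces us into a triple-intersection point. Because the $a_i$ are fixed in $(0,1)$, there is a uniform upper bound $k=k(a_1,\dots,a_p)$ on the number of consecutive blow-ups along curves before the total boundary coefficient at the relevant point would exceed what is compatible with discrepancy $<1$; and blow-ups of points introduce discrepancy jumping by $2-(\text{boundary multiplicity})$, which again can only happen boundedly often before discrepancies become $\ge 1$. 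So the length of any relevant extraction sequence is bounded by some $N_0=N_0(p,a_1,\dots,a_p)$.

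The combinatorial bookkeeping is then to count the branching. At each stage the centre of the next relevant blow-up must be a stratum of the (simple-normal-crossing) exceptional-plus-strict-transform divisor, or a point on it; the number of such strata is controlled by $p$, by $m$ (the number of components of $Z$), and by $I$ (the number of triple-intersection points of the $S_i$, which governs how many "special" points can appear on the curves of $Z$ before blowing up). Since both the depth $N_0$ and the branching factor at each node are bounded in terms of $m,p,a_1,\dots,a_p,I$, the total number of geometric valuations $E$ with $c_X(E)\subseteq Z$ and $a(E,X,\Delta)<1$, as well as the total number of smooth blow-ups needed to realise all of them simultaneously, is bounded by a constant $N=N(m,p,a_1,\dots,a_p,I)$, as claimed.

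The main obstacle is the second paragraph: making the discrepancy/coefficient inequality genuinely decrease (or hit a triple point) at every step, uniformly, so that the termination of the extraction process is quantitative rather than merely qualitative. This is exactly the kind of estimate used in proofs of termination of threefold flips, and the delicate case is when the centre of a blow-up passes through a point where three of the $S_i$ (or their strict transforms together with previously created exceptional divisors) meet — this is why $I$, the number of triple-intersection points, must enter the bound, and why the hypothesis $a_i<1$ (terminality) is essential.
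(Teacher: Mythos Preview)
Your plan is sound in outline and would lead to a correct proof, but it is organised differently from the paper's argument. The paper does not run a single induction tracking discrepancies through an arbitrary tower of blow-ups. Instead it first performs a \emph{preliminary} sequence of at most $M=M(m,p,I)$ blow-ups---first along the triple points of $\sum S_i$, then along the pairwise intersection curves---to reach a model $f\colon Y\to X$ on which the strict transforms of the $S_i$ (together with the relevant exceptional divisors of small discrepancy) are pairwise \emph{disjoint}. On $Y$ the possible discrepancies $<1$ of $f$-exceptional divisors are explicitly $2-a_i-a_j-a_k$ or $1-a_i-a_j$, hence finite in number. Any further valuation $E$ with $a(E,X,\Delta)<1$ that is exceptional over $Y$ must then have centre on a curve of the form $f_*^{-1}S_i\cap G$ with $0<a(G,X,\Delta)<1$; collecting these curves into a set $Z'$ of size at most $m+mM$, the paper finishes by invoking \cite[Example~1.4]{AHK06}, which handles precisely the disjoint-boundary case.

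So the paper trades your direct inductive bookkeeping for a two-step reduction: an explicit separation of the boundary, followed by a citation. Your approach has the advantage of being self-contained, but the paragraph where you assert that ``each blow-up either strictly increases the sum of boundary coefficients along the exceptional locus, or forces us into a triple-intersection point'' is imprecise as written: the coefficient of a new exceptional divisor in the crepant pull-back is $-a(F,X,\Delta)$, which is \emph{negative} here (the pair is terminal), so it is the discrepancy, not a boundary-coefficient sum, that one should track. Concretely, a curve centre on a higher model lies in at most two components of the SNC divisor, and the only way to keep the new discrepancy below $1$ is to have the centre in $\tilde S_i\cap F_j$ with $a(F_j,X,\Delta)<a_i$; iterating along a fixed $\tilde S_i$ gives discrepancies $k(1-a_i)$, which exceed $1$ after at most $\lceil 1/(1-a_i)\rceil$ steps. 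Once you make this mechanism explicit, your depth bound $N_0(a_1,\dots,a_p)$ and the branching bound via $m,p,I$ are correct, and the argument goes through. The paper's route simply packages the same mechanism more efficiently by isolating the disjoint case.
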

\begin{proof}
After possibly replacing $X$ by a smaller open subset, we may assume that $S_i\cap S_j\subseteq Z$ for any distinct $i,j\in\{1,\dots,p\}$. Since $(X,\Delta)$ is log smooth, by first blowing up intersections of triples of components $S_i$, and then intersections of each two of them, we obtain a composition of $M=M(m,p,I)$ blowups $f\colon Y\longrightarrow X$ such that we may write
$$K_Y+\Gamma=f^*(K_X+\Delta)+E_Y,$$
where $\Gamma$ and $E_Y$ are effective $\mathbb R$-divisors with no common components, $(Y,\Gamma)$ is log smooth, $E_Y$ is $f$-exceptional and the components of $\Gamma$ are pairwise disjoint. In particular, there are at most $C$ prime divisors $E$ on $Y$ such that $a(E,X,\Delta)<1$. Also, note that by discrepancy formulas, the discrepancies $a(E,X,\Delta)$ which lie in the interval $(0,1)$ are of the form $2-a_i-a_j-a_k$ or $1-a_i-a_j$ for some pairwise different $i,j,k$. 

It remains to count valuations which are exceptional over $Y$. Let $g\colon W\longrightarrow X$ be a log 
resolution which dominates $Y$, and let $W'\longrightarrow W$ be a blowup along a smooth centre with 
exceptional divisor $F$. Then it is easy to see that if $a(F,X,\Delta)<1$, then $c_W(F)$ is the intersection 
of the proper transform of some $S_i$ and some prime divisor $G$ on $Y$ with $0<a(G,X,\Delta)<1$. 

For each curve $C\subseteq Z$, if $f^{-1}$ is an isomorphism at the generic point of $C$, let $C'\subseteq Y$ be the unique curve isomorphic to $C$ at the generic point of $C'$; otherwise, let $C'$ be the union of curves on $Y$ which map onto $C$, and which are of the form $f^{-1}_*S_i\cap F$ for some prime divisor $F\subseteq Y$ with $0<a(F,X,\Delta)<1$. Hence, there are at most $m+mM$ such curves, let $Z'$ be their union, and by shrinking $X$ we may assume that all the curves in $Z'$ are smooth. Then, similarly as in \cite[Example 1.4]{AHK06}, there are at most $N'=N'(m,M,a_1,\dots,a_p)$ valuations over $Y$ with discrepancy smaller than $1$ and whose centres lie in $Z'$. Now set $N=N'+m$.
\end{proof}

Let $(X,\Delta)$ be a klt pair. A birational contraction $f\colon \rmap X.Y.$ is a \emph{log terminal model} for  $(X,\Delta)$ if $f$ is $(K_X+\Delta)$-negative, $Y$ is $\mathbb Q$-factorial and $K_Y+f_*\Delta$ is nef. If $K_Y+f_*\Delta$ is semiample, then it  defines a fibration $g\colon \map Y.Z.$, and the induced map $\rmap X.Z.$ is the {\em ample model} of $(X,\Delta)$. A log terminal model of $(X,0)$ is called a \emph{minimal model} of $X$.

The following result is well known, see for instance \cite[Lemma 2.13]{CZ12}:

\begin{lemma}\label{l_fact}
Let $(X,\Delta)$ be a log smooth log canonical surface pair such that $K_X+\Delta$ is pseudoeffective, and let $f\colon\map X.Y.$ be the log terminal model of $(X,\Delta)$. 
 
Then there exist birational morphisms $g\colon \map X.Z.$ and $h\colon \map Z.Y.$ such that $f=h\circ g$ and such that 
\begin{enumerate}
\item[(i)] $g$ is a composite of contractions of $(-1)$-curves, and
\item[(ii)] $h$ contracts only curves contained in the support of $g_*\Delta$.
\end{enumerate}
\end{lemma}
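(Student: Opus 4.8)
The plan is to factor $f$ through the minimal resolution of $Y$.

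First I would build $g$ by running the minimal model program for $K_X$ relative to $Y$. Since $X$ is a smooth surface and $f$ is a birational morphism, each step of this program contracts a $K_X$-negative extremal ray over $Y$, which on a smooth surface is the blow-down of a $(-1)$-curve; thus the surface stays smooth, and since $\rho(X/Y)$ drops by one at each step the program terminates. This produces a smooth surface $Z$, a birational morphism $h\colon\map Z.Y.$ with $K_Z$ nef over $Y$ --- i.e.\ $Z$ is the minimal resolution of $Y$ --- and a factorisation $f=h\circ g$. By construction $g$ is a composite of contractions of $(-1)$-curves, which is~(i).

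To prove~(ii), let $E\subseteq Z$ be a prime divisor contracted by $h$ and put $\Delta_Y=f_*\Delta$. Then $E$ is $f$-exceptional, so by $(K_X+\Delta)$-negativity of $f=h\circ g$ we have $a(E,X,\Delta)<a(E,Y,\Delta_Y)$. On the other hand, because $Z\to Y$ is the minimal resolution, $a(F,Y,0)\le 0$ for every $h$-exceptional prime divisor $F$; since $\Delta_Y\ge 0$ this gives $a(E,Y,\Delta_Y)\le a(E,Y,0)\le 0$, and hence $a(E,X,\Delta)<0$. The point is now that, although $E$ is exceptional over $Y$, it is \emph{not} exceptional over $X$: as $X$ dominates $Z$, the centre of $E$ on $X$ is the prime divisor $S:=g^{-1}_{*}E$. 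Therefore $a(E,X,\Delta)=-\mult_{S}\Delta<0$, so $S$ is a component of $\Supp\Delta$; and since $g$ does not contract $S$ we have $g_{*}S=E$, so $E$ is a component of $\Supp(g_{*}\Delta)$. Thus every curve contracted by $h$ is contained in $\Supp(g_{*}\Delta)$, which is~(ii), and together with $f=h\circ g$ this finishes the proof.

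The routine ingredients --- termination of the relative $K_X$-MMP, the description of relative $K_X$-negative contractions of a smooth surface as $(-1)$-curve blow-downs, and the fact that a minimal resolution of a surface has non-positive discrepancies --- are all standard for surfaces; log-smoothness and log-canonicity of $(X,\Delta)$ enter only through the existence of the log terminal model. The one step carrying the actual content is the discrepancy computation in the third paragraph, and the thing one must be careful about there is the observation that an $h$-exceptional divisor, while exceptional over $Y$, has a divisorial centre on $X$ --- which is precisely what allows $a(E,X,\Delta)$ to be identified with minus a coefficient of $\Delta$.
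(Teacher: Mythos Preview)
The paper does not supply its own proof of this lemma; it simply cites \cite[Lemma~2.13]{CZ12}. Your argument is correct and is the standard one: factor $f$ through the minimal resolution $Z$ of $Y$ by running the relative $K_X$-MMP over $Y$ (each step is a Castelnuovo contraction, giving~(i)), and then use that the discrepancies of the minimal resolution are $\le 0$ together with the $(K_X+\Delta)$-negativity of $f$ to force every $h$-exceptional curve to arise from a component of $\Delta$ on $X$, giving~(ii).
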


Throughout the paper, under the \emph{Minimal Model Program in dimension $n$} we assume that each log canonical pair $(X,\Delta)$ of dimension $n$ with $K_X+\Delta$ pseudoeffective admits a log terminal model and the ample model. 

Let $(X,\Delta)$ be a klt pair of dimension $n$, and let $f\colon\rmap X.Y.$ be a log terminal model of $(X,\Delta)$. If the Minimal Model Program holds in dimension $n$, then the prime divisors contracted by $f$ are precisely those that are contained in $\Bs(K_X+\Delta)$. The following lemma, which will be extensively used in Section \ref{sec:threefolds}, establishes a similar link between ample models and the augmented base loci.

\begin{lemma}\label{l_bplus} 
Let $X$ be a smooth projective threefold and let $D$ be a big $\mathbb Q$-divisor on $X$.  Let $f\colon \rmap X.Y.$ be the ample model of $D$.

Then $\Bs_+(D)$ coincides with the exceptional locus of $f$. 
\end{lemma}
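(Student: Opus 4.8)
The plan is to establish the two inclusions $\Exc(f)\subseteq\Bs_+(D)$ and $\Bs_+(D)\subseteq\Exc(f)$ separately, using that $D$ is big so that the ample model $f\colon\rmap X.Y.$ exists and that, after passing to a resolution $(p,q)\colon\map W.X\times Y.$, one can write $p^*D\sim_{\mathbb Q}q^*H+E$ where $H$ is ample on $Y$ and $E\ge0$ is $q$-exceptional, with $\Supp E$ containing exactly the strict transforms of the $f$-exceptional divisors (this is the defining property of the ample model together with $D$-negativity). The first inclusion is the easy direction: if $C$ is an $f$-exceptional prime divisor then $C\subseteq\Bs_+(D)$, because for small $\varepsilon>0$ the class $p^*(D-\varepsilon A)\sim_{\mathbb Q}q^*H+E-\varepsilon p^*A$ still has $E$ in its stable base locus — more precisely one checks that any effective divisor $\mathbb Q$-linearly equivalent to $D-\varepsilon A$ must contain $C$, since pulling back to $W$ and comparing with $q^*H+E-\varepsilon p^*A$ and using that $q_*$ of an effective divisor supported on $q$-exceptional locus is trivial forces the coefficient along $C$ to stay positive. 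Running this over all $f$-exceptional divisors gives $\Exc(f)\subseteq\Bs_+(D)$ at the level of divisorial components, and since $\Exc(f)$ has pure codimension one here (it is the locus contracted by the birational contraction $f$ between threefolds, and $Y$ is normal), this suffices.

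For the reverse inclusion $\Bs_+(D)\subseteq\Exc(f)$, the idea is that away from $\Exc(f)$ the map $f$ is an isomorphism onto an open subset $V\subseteq Y$, and on $V$ we have $D|_{f^{-1}(V)}\sim_{\mathbb Q}(f^*H)|_{f^{-1}(V)}$ with $H$ ample; so on $f^{-1}(V)$ the divisor $D$ looks like the restriction of an ample divisor. One then argues that for a point $x\notin\Exc(f)$ one can find, for each sufficiently small $\varepsilon>0$, an effective divisor $\sim_{\mathbb Q}D-\varepsilon A$ not passing through $x$: take a general member of $|H-\varepsilon' H'|$ on $Y$ (for suitable $H'$ ample and $\varepsilon'$ small) avoiding $f(x)$, pull it back and add the fixed exceptional part $E$; since $x\notin\Exc(f)$ the exceptional contributions do not meet $x$, and choosing $\varepsilon$ appropriately in terms of $\varepsilon'$ one sees $x\notin\Bs(D-\varepsilon A)$, hence $x\notin\Bs_+(D)$. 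This is essentially the standard fact that $\Bs_+$ of a pullback of an ample divisor under a birational contraction is contained in the exceptional locus, combined with the additivity of $\Bs_+$ under adding the fixed effective divisor $E$.

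The main obstacle, and the step requiring the most care, is keeping track of the $\mathbb Q$-linear equivalences and the $\varepsilon$'s uniformly on the resolution $W$: one must ensure that the decomposition $p^*D\sim_{\mathbb Q}q^*H+E$ has $E$ supported on \emph{precisely} the strict transforms of $f$-exceptional divisors (no more, no less), which uses that $f$ is a contraction that is $D$-negative — this is where one invokes the characterization of the ample model and $D$-negativity from the definitions recalled earlier in the paper. A secondary subtlety is verifying that $\Exc(f)$ is purely divisorial in our threefold setting so that checking the inclusions on divisorial components is enough; this follows since $f$ is a birational contraction to a normal (indeed $\mathbb Q$-factorial, after the MMP) threefold, so the exceptional locus has pure codimension one. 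I would structure the write-up as: (1) recall the ample-model decomposition on $W$; (2) prove $\Exc(f)\subseteq\Bs_+(D)$ by the coefficient-tracking argument; (3) prove $\Bs_+(D)\subseteq\Exc(f)$ by constructing, for $x\notin\Exc(f)$, effective divisors $\sim_{\mathbb Q}D-\varepsilon A$ avoiding $x$; (4) conclude equality.
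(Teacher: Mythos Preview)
The paper does not prove this directly: its proof is the one-line citation ``The result follows immediately from \cite[Theorem A]{BCL13}.'' So any correct self-contained argument would already be ``different''. But your plan has a genuine gap.

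The error is the claim that $\Exc(f)$ has pure codimension one. The ample model $f\colon X\dashrightarrow Y=\Proj R(X,D)$ is in general only a \emph{rational} map: it factors through a log terminal model via a sequence of flips, and the flipping curves on $X$ lie in the indeterminacy locus of $f$. Thus $\Exc(f)$ typically contains curves not lying on any $f$-exceptional divisor. The purity statement you invoke (exceptional locus of a birational morphism from a smooth variety is divisorial) applies to \emph{morphisms}, not to rational contractions; and $Y$ is not $\mathbb Q$-factorial in general, so you cannot appeal to that either. This matters for the paper: in Lemma \ref{l_corner} the inclusion $C\subseteq\Exc(f)$ is applied precisely to curves, so the non-divisorial part of $\Exc(f)$ is exactly what is being used.

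Consequently your argument for $\Exc(f)\subseteq\Bs_+(D)$ is incomplete: you only show that $f$-exceptional \emph{divisors} lie in $\Bs_+(D)$, and then close the gap with the false purity claim. To fix this you would need to show directly that every point where $f$ fails to be a local isomorphism lies in $\Bs_+(D)$; one way is to use the characterisation of $X\setminus\Bs_+(D)$ as the largest open set on which $\phi_{|mD|}$ is an embedding for $m\gg0$, which is essentially what \cite{BCL13} establishes. Your reverse inclusion $\Bs_+(D)\subseteq\Exc(f)$ is fine in outline.
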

\begin{proof}
The result follows immediately from \cite[Theorem A]{BCL13}.
\end{proof}

In special circumstances, the restriction of an MMP for a pair $(X,\Delta)$ to a prime divisor $S$ on $X$ induces an MMP on $S$. The following lemma is just a minor reformulation of \cite[Lemma 4.1]{BCHM10}, and follows from the proof of that result.

\begin{lemma}\label{lem:BCHM}
Let $(X,S+B)$ be a log smooth pair, where $S$ is a prime divisor and $\lfloor B\rfloor=0$, and let $\varphi\colon\rmap X.X'.$ be a weak log canonical model of $K_X+S+B$. Assume that $\varphi$ does not contract $S$, let $S'=\varphi_*S$ and $B'=\varphi_*B$, and let $\sigma\colon\rmap S.S'.$ be the induced birational map. Define a divisor $\Psi$ on $S'$ by $(K_{X'}+S'+B')|_{S'} = K_{S'} +\Psi$. 

If $(S,B|_S)$ is terminal, then there is a divisor $\Xi\leq B|_S$ such that $\sigma_*\Xi=\Psi$ and $\sigma$ is a weak log canonical model of $K_S+\Xi$.
\end{lemma}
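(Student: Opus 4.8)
The plan is to follow the proof of \cite[Lemma 4.1]{BCHM10}: compare $B|_S$ with the different $\Psi$ on a common resolution of $\sigma$, and read off both the divisor $\Xi$ and the weak log canonical model property from the fact that $\varphi$ is $(K_X+S+B)$-nonpositive with $K_{X'}+S'+B'$ nef.

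First I would fix a common log resolution $(p,q)\colon W\to X\times X'$ of $\varphi$ and let $T\subseteq W$ be the strict transform of $S$; since $\varphi$ is a birational contraction which does not contract $S$, $T$ is also the strict transform of $S'$, so the restrictions $p_T\colon T\to S$ and $q_T\colon T\to S'$ are birational morphisms resolving $\sigma$. As $(X,S+B)$ is log smooth with $\lfloor S+B\rfloor=S$ irreducible it is plt, and since $\varphi$ does not contract $S$ the pair $(X',S'+B')$ is plt as well; hence $\Psi=\Diff_{S'}(B')\ge 0$, and $K_{S'}+\Psi=(K_{X'}+S'+B')|_{S'}$ is nef, being the restriction of a nef divisor. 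Because $\varphi$ is $(K_X+S+B)$-nonpositive we may write $p^*(K_X+S+B)=q^*(K_{X'}+S'+B')+E_W$ with $E_W\ge 0$ and $q$-exceptional. Restricting to $T$, which is not a component of $E_W$, and using adjunction $(K_X+S+B)|_S=K_S+B|_S$, we obtain
\begin{equation*}
p_T^*(K_S+B|_S)=q_T^*(K_{S'}+\Psi)+E,\qquad E:=E_W|_T\ge 0 .
\end{equation*}
Pulling this equation back to an arbitrary higher resolution of $\sigma$ gives $a(F,S',\Psi)\ge a(F,S,B|_S)$ for every geometric valuation $F$ over $S$.

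Next I would use this, together with the terminality of $(S,B|_S)$, to show that $\sigma$ is a birational contraction: if $\sigma^{-1}$ extracted a prime divisor $\Gamma$ on $S'$, then $\Gamma$ would be exceptional over $S$, so $a(\Gamma,S,B|_S)>0$, while $a(\Gamma,S',\Psi)=-\mult_\Gamma\Psi\le 0$ since $\Psi\ge 0$, contradicting the previous inequality. Consequently the requirement $\sigma_*\Xi=\Psi$ forces $\mult_C\Xi=\mult_{\sigma(C)}\Psi$ for every prime divisor $C$ of $S$ not contracted by $\sigma$, and that same inequality shows this value is $\le\mult_C B|_S$. I would then \emph{define} $\Xi$ to have these coefficients along the curves not contracted by $\sigma$ and to agree with $B|_S$ along the curves contracted by $\sigma$. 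Then $0\le\Xi\le B|_S$, so $(S,\Xi)$ is again terminal, and $\sigma_*\Xi=\Psi$ by construction.

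It remains to prove that $\sigma$ is $(K_S+\Xi)$-nonpositive, i.e.\ that $a(F,S',\Psi)\ge a(F,S,\Xi)$ for every geometric valuation $F$ over $S$, and this is the main obstacle. For $F$ a prime divisor on $S$ it holds by the construction of $\Xi$. For $F$ exceptional over $S$ one must rule out a drop of discrepancy in passing from $B|_S$ to $\Xi$, which amounts to showing that the effective divisor $E$ dominates $p_T^*(B|_S-\Xi)$ on $T$. Here terminality of $(S,B|_S)$ is essential: exactly as in the argument above it forces every $p_T$-exceptional prime divisor on $T$ to be $q_T$-exceptional, and then the discrepancy computation carried out in the proof of \cite[Lemma 4.1]{BCHM10} yields $a(F,S,\Xi)\le a(F,S',\Psi)$. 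Combined with $\sigma_*\Xi=\Psi$ and the nefness of $K_{S'}+\Psi$, this shows that $\sigma$ is a weak log canonical model of $K_S+\Xi$.
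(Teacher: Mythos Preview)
Your proposal is correct and follows exactly the approach the paper intends: the paper gives no independent argument but simply states that the lemma ``is just a minor reformulation of \cite[Lemma 4.1]{BCHM10}, and follows from the proof of that result,'' and your sketch faithfully reproduces that BCHM argument (common resolution, restriction via adjunction, the terminality of $(S,B|_S)$ to force $\sigma$ to be a contraction, the construction of $\Xi$, and the final discrepancy comparison). If anything, your write-up is more detailed than what the paper provides.
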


The next lemma, combined with Lemma \ref{lem:BCHM}, shows that under certain assumptions, the restriction of the ample model is again the ample model on the restriction.

\begin{lemma}\label{lem:canonical}
Let $(X,S+B)$ be a plt pair, where $S$ is a prime divisor and $\lfloor B\rfloor=0$. Assume that $D=K_X+S+B$ is semiample, and let $f\colon\map X.Y.$ be the corresponding fibration. Assume that $f(S)\neq Y$ and let $g=f|_S$. 

Then $g$ is the semiample fibration associated to $D_{|S}$.
\end{lemma}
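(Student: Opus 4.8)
The plan is to use the universal property of the fibration associated to a semiample divisor: if $D = K_X + S + B$ is semiample, then the morphism $f\colon X \to Y$ is, up to isomorphism of $Y$, the morphism defined by the complete linear system $|mD|$ for $m$ sufficiently large and divisible, and $Y = \operatorname{Proj} R(X, D)$. Since $f(S) \neq Y$, the divisor $D_{|S}$ is still semiample (it is the restriction of a semiample divisor), so it too defines a fibration $g'\colon S \to Z$ with $Z = \operatorname{Proj} R(S, D_{|S})$; I must show that $g'$ agrees with $g = f|_S$, i.e. that $Z$ is naturally identified with $\overline{f(S)}$ and that under this identification $g'$ becomes $f|_S$.

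First I would reduce to showing that for $m$ large and divisible, the restriction map $H^0(X, \mathcal O_X(mD)) \to H^0(S, \mathcal O_S(mD_{|S}))$ has image defining the same morphism on $S$ as the complete system $|mD_{|S}|$; equivalently, that the natural map of section rings $R(X,D) \to R(S, D_{|S})$ is, in large enough degree, surjective onto a subring with the same $\operatorname{Proj}$. The key input here is a vanishing/surjectivity statement: because $(X, S+B)$ is plt, $S$ is normal and $(S, B_{|S})$ is klt by adjunction, and the ideal sheaf sequence $0 \to \mathcal O_X(mD - S) \to \mathcal O_X(mD) \to \mathcal O_S(mD_{|S}) \to 0$ together with the fact that $mD - S = K_X + (m-1)D + B$ and $D$ is semiample (hence nef and the relevant Kawamata–Viehweg-type vanishing applies, after passing to the ample model of $D$ if needed) gives $H^1(X, \mathcal O_X(mD - S)) = 0$ for $m \gg 0$. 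Hence the restriction is surjective on global sections in all large degrees, so $R(S, D_{|S})$ is the image of $R(X,D)$ in large degree, and therefore $\operatorname{Proj} R(S, D_{|S})$ is identified with the scheme-theoretic image of $S$ in $Y = \operatorname{Proj} R(X,D)$, which is exactly $\overline{g(S)}$; moreover the morphism induced by $|mD_{|S}|$ is by construction the composite $S \hookrightarrow X \xrightarrow{f} Y$ followed by this identification, i.e. it equals $g$.

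The main obstacle is the surjectivity of the restriction on sections, i.e. the vanishing $H^1(X, \mathcal O_X(mD-S)) = 0$: one has to run the argument carefully because $D$ is only semiample, not ample, so the vanishing theorem one applies is for a nef and big divisor (using that $D$ is big since $f$ is not the constant map — note $\dim Y \geq 1$), and one must check that the semiampleness of $D$ plus the plt hypothesis puts us in the situation where Kawamata–Viehweg vanishing with the boundary $(m-1)D + B$ applies; alternatively one descends to the ample model $h\colon X \to W$ of $D$, writes $D = h^*A$ with $A$ ample on $W$, and runs the vanishing there after pushing forward. Once surjectivity in large degree is in hand, the remaining identification of $\operatorname{Proj}$'s and morphisms is formal. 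I would also record at the start that $D_{|S}$ is $\mathbb R$-Cartier and semiample simply as the restriction of a semiample $\mathbb R$-Cartier divisor, and that by adjunction $D_{|S} = K_S + \Psi$ with $(S, \Psi)$ klt, so that the statement is internally consistent with the normalisation used in Lemma \ref{lem:BCHM}.
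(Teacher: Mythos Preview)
Your overall strategy---reduce to surjectivity of the restriction $H^0(X,\mathcal O_X(mD))\to H^0(S,\mathcal O_S(mD_{|S}))$ via the ideal-sheaf sequence---is the same as the paper's. The gap is in how you justify this surjectivity. You want $H^1(X,\mathcal O_X(mD-S))=0$ from Kawamata--Viehweg, and you claim ``$D$ is big since $f$ is not the constant map --- note $\dim Y\geq 1$''. This is false: semiample with $\dim Y\geq 1$ does not imply big; bigness requires $\dim Y=\dim X$. Nothing in the hypotheses forces $f$ to be birational, so $(m-1)D$ need not be nef and big and Kawamata--Viehweg does not apply. Your fallback (``descend to the ample model and run vanishing there after pushing forward'') is too vague to rescue this: the ample model is $Y$ itself, and you would need to control $R^if_*\mathcal O_X(mD-S)$ and then apply Serre vanishing on $Y$, none of which you have set up.

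The paper sidesteps this entirely by asking for less: rather than $H^1(X,mD-S)=0$, it only needs the map $H^1(X,mD-S)\to H^1(X,mD)$ to be injective. Writing $mD-S=K_X+B+(m-1)f^*A$ with $A$ ample on $Y$, this injectivity is exactly Koll\'ar's injectivity theorem (e.g.\ \cite[(10.19.3)]{Kollar93}), which holds for semiample divisors without any bigness assumption. That is the missing ingredient in your argument.
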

\begin{proof}
Fix a sufficiently divisible positive integer $m$ such that $f$ is the map associated to the linear system $|mD|$, and let $A$ be an ample $\mathbb Q$-divisor on $Y$ such that $D=f^*A$. Then $g$ is the map associated to the linear system $|mD|_S$, and it is enough to show that $|mD|_S=|mD_{|S}|$. From a long exact sequence in cohomology, this in turn is equivalent to showing that the map
$$H^1(X,mD-S)\longrightarrow H^1(X,mD)$$
is injective. Since $mD-S=K_X+B+(m-1)f^*A$, this follows from \cite[(10.19.3)]{Kollar93}.
\end{proof}

\begin{remark}
The assumption $f(S)\neq Y$ in Lemma \ref{lem:canonical} is necessary. Indeed, let $Y$ be a curve of genus $\geq2$. Let $\mathcal E$ be a sufficiently ample vector bundle of rank $2$ on $Y$, set $X=\p(\mathcal E)$, and let $f\colon \map X.Y.$ be the projection map. Then, by assumption, the line bundle $\xi=c_1(\mathcal O(1))$ is very ample, and let $S\in|2\xi|$ be a general section. If $G=c_1(\mathcal E)$, then $K_X+S=f^*(K_Y+G)$, and since $K_Y+G$ is ample, $f$ is the semiample fibration associated to $K_X+S$. However, the general fibre of $f$ meets $S$ in two points, thus $f|_S$ does not have connected fibres. 
\end{remark}

\subsection{Convex geometry}

Let $\mathcal{C}\subseteq \mathbb R^p$ be a convex set.
A subset $F\subseteq\mathcal C$ is a \emph{face} of $\mathcal{C}$ if $F$ is convex, and whenever $tu+(1-t)v\in F$ for some $u,v\in\mathcal C$ and $0<t<1$, then $u,v\in F$.  Note that $\mathcal C$ is itself a face of $\mathcal C$.
We say that $x\in \mathcal{C}$ is an \emph{extreme point} of $\mathcal C$ if $\{x\}$ is a face of $\mathcal{C}$.

A \emph{polytope} in $\mathbb R^p$ is a compact set which is the intersection of finitely many half spaces. A polytope is \emph{rational} if it is an intersection of finitely many rational half spaces. A \emph{rational polyhedral cone} in $\mathbb R^p$ is a convex cone spanned by finitely many rational vectors.

\begin{lemma}\label{l_comb}
Let $\mathcal C\subseteq \mathbb R^p$ be a rational polytope  which is defined by half-spaces 
$$\big\{(x_1,\dots,x_p)\in \mathbb R^p\mid \sum\nolimits_{j=1}^p\alpha_{ij}x_j\ge \beta_i\big\}$$ 
for $i=1,\dots,\ell$, where $\alpha_{ij}$ and $\beta_i$ are integers. Let $M$ be a positive integer such that 
$$\alpha_{ij}\ge -M \qquad\text{and}\qquad |\beta_i|<M$$ 
for all $i,j$. Pick  a positive real number $\varepsilon<1$.

Then there exists a positive integer $m$ which depends only on $M$, $p$ and $\varepsilon$ (but not on $\mathcal C$), such that for every extreme point $v$ of $\mathcal C$ which is contained in $[\varepsilon,1]^p$, the point $mv$ is integral.
\end{lemma}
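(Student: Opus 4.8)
The plan is to bound the denominator of an extreme point by bounding the determinant of the linear system that cuts it out. An extreme point $v$ of $\mathcal C$ is a vertex, hence it is the unique solution of a subsystem of $p$ of the defining inequalities made into equalities: there is a subset $\{i_1,\dots,i_p\}\subseteq\{1,\dots,\ell\}$ such that $v$ satisfies $\sum_{j=1}^p\alpha_{i_kj}x_j=\beta_{i_k}$ for $k=1,\dots,p$, and the matrix $A=(\alpha_{i_kj})_{k,j}$ is invertible. By Cramer's rule, $v=A^{-1}b$ where $b=(\beta_{i_1},\dots,\beta_{i_p})^{t}$, so each coordinate of $v$ is a rational number whose denominator divides $d:=|\det A|$, a positive integer. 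Thus $dv$ is integral, and it remains to bound $d$ independently of $\mathcal C$.

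The issue is that the entries $\alpha_{ij}$ are only bounded below by $-M$, not in absolute value, so $|\det A|$ cannot be bounded directly by Hadamard's inequality. This is where the hypothesis $v\in[\varepsilon,1]^p$ enters, and it is the crux of the argument. First I would observe that if some row $(\alpha_{i_k1},\dots,\alpha_{i_kp})$ has a very large positive entry $\alpha_{i_kj_0}$, then the equality $\sum_j\alpha_{i_kj}v_j=\beta_{i_k}$ together with $v_j\ge\varepsilon>0$ forces $\varepsilon\,\alpha_{i_kj_0}\le\sum_j\alpha_{i_kj}v_j+\sum_{j\ne j_0}(-\alpha_{i_kj})v_j\le |\beta_{i_k}|+(p-1)M\cdot 1< M+(p-1)M = pM$ (using $\alpha_{i_kj}\ge -M$ and $v_j\le 1$), so in fact $\alpha_{i_kj_0}< pM/\varepsilon$ for every entry in that row. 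Hence every entry of $A$ satisfies $-M\le\alpha_{i_kj}< pM/\varepsilon$, i.e. $|\alpha_{i_kj}|< pM/\varepsilon$ (after enlarging the bound to absorb $M$). Now Hadamard's inequality gives $d=|\det A|\le \big(p^{1/2}\cdot pM/\varepsilon\big)^p =: m_0$, a constant depending only on $p$, $M$ and $\varepsilon$.

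Finally I would set $m$ to be (any multiple of) $\mathrm{lcm}$ of all integers in $[1,m_0]$, or simply $m=\lfloor m_0\rfloor!$; since the denominator $d$ of each coordinate of any extreme point $v\in[\varepsilon,1]^p$ satisfies $d\le m_0$ and hence $d\mid m$, the point $mv$ is integral. This $m$ depends only on $M$, $p$ and $\varepsilon$, as required, and not on $\mathcal C$ (nor on $\ell$, since the bound on the entries of the submatrix $A$ did not involve $\ell$). The main obstacle is purely the a priori lack of an upper bound on the $\alpha_{ij}$; the reduction to determinants is routine, and the one genuinely new input is the elementary estimate above showing that the lower bound on all entries of a row, combined with $v$ being bounded away from $0$ and from above, forces an upper bound on the entries of that row as well.
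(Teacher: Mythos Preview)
Your proposal is correct and follows essentially the same approach as the paper: reduce to bounding $|\det A|$ via Cramer's rule, use the constraint $\sum_j\alpha_{ij}v_j=\beta_i$ together with $v_j\in[\varepsilon,1]$ and $\alpha_{ij}\ge -M$ to force $\alpha_{ij}<pM/\varepsilon$, and then take $m=m_0!$ for the resulting determinant bound $m_0$. The only cosmetic differences are that the paper bounds all positive entries of a row in one inequality rather than one at a time, and leaves the passage from the entry bound to the determinant bound implicit where you invoke Hadamard's inequality.
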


\begin{proof}
Since $v=(v_1,\dots,v_p)$ is an extreme point of $\mathcal C$, after relabelling we may assume that $\sum_{j=1}^p\alpha_{ij} v_j=\beta_i$ for $i=1,\dots,p$. Denoting by $A$ the ($p\times p$)-matrix $(\alpha_{ij})$, we may additionally assume that the rows of $A$ are linearly independent over $\mathbb R$.  In particular, $\det A\neq0$ and Cramer's rule implies that $\det A\cdot v$ is integral. By assumption, we have
$$\sum_{\alpha_{ij}<0}\alpha_{ij}+\varepsilon\sum_{\alpha_{ij}>0} \alpha_{ij} \le  \sum_{j=1}^p\alpha_{ij}v_j = \beta_i<M,$$
and since $\alpha_{ij}\ge -M$, we have
$$|\alpha_{ij}|<\frac{Mp}\varepsilon\qquad \text{for all}\quad i,j=1,\dots,p.$$ 
Therefore, $\det A$ is bounded by a constant $m_0$ which depends on $M$, $p$ and $\varepsilon$, and the claim follows by taking $m=m_0!$. 
\end{proof}

\begin{definition}
Let $\mathcal P_1,\mathcal P_2\subseteq\mathbb R^p$ be polytopes of dimension $p$. We say that $\mathcal P_i$ are \emph{adjacent} if $\mathcal P_1\cap\mathcal P_2$ is a codimension one face of both $\mathcal P_1$ and $\mathcal P_2$.

Let $\mathcal P=\bigcup_{i=1}^k\mathcal P_i$ be a (not necessarily convex) finite union of polytopes. We say that $\mathcal P_i$ and $\mathcal P_j$ are \emph{adjacent-connected} if there exist indices $i_1,\dots,i_q$ such that $i_1=i$, $i_q=j$, and $\mathcal P_{i_s}$ and $\mathcal P_{i_{s+1}}$ are adjacent for every $s=1,\dots,q-1$. The equivalence classes of this relation are called \emph{adjacent-connected components}. If the whole $\mathcal P$ belongs to one such component, we say that $\mathcal P$ is also adjacent-connected. A \emph{face} of $\mathcal P$ is a face of any $\mathcal P_i$ which is not contained in the interior of $\mathcal P$.
\end{definition}

\begin{lemma}\label{lemma:polytopes}
Let $\mathcal Q\subseteq[0,1]^p\subseteq \mathbb R^p$ be a polytope containing the origin, and let $\mathcal C_1,\dots,\mathcal C_\ell$ be $p$-dimensional polytopes with pairwise disjoint interiors such that $\mathcal Q=\bigcup_{i=1}^\ell\mathcal C_i$. Let $\mathcal P_1,\dots,\mathcal P_k\subseteq \mathcal Q$ be $p$-dimensional polytopes such that
\begin{equation}\label{eq:polytopes}
(\mathcal P_i+\mathbb R_+^p)\cap \mathcal Q\subseteq \mathcal P_i
\end{equation}
for all $i$.  For any subset $I\subseteq \{1,\dots,k\}$, denote by $\mathcal R_I$ the closure of $\bigcup_{i\in I} \mathcal P_i \backslash \bigcup_{j\not\in I}\mathcal P_j$, and let $\mathcal R_0$ denote the closure of $\mathcal Q\setminus \bigcup_{i=1}^k \mathcal P_i$. Assume that each adjacent-connected component of every $\mathcal R_I$ and of $\mathcal R_0$ with respect to the covering $\mathcal Q=\bigcup_{i=1}^\ell\mathcal C_i$ is the union of at most $m$ polytopes $\mathcal C_i$. 

Then there exists a constant $M=M(k,m)$ such that $\ell\le M$. 
\end{lemma}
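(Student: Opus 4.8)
The plan is to compare $\ell$ with the number of adjacent-connected components of the various regions $\mathcal R_I$ and $\mathcal R_0$, and to bound the latter using $k$ alone. For $x\in\mathcal Q$ put $S(x)=\{\,j : x\in\mathcal P_j\,\}$ and let $\mathcal D_S$ be the closure of $\{x\in\mathcal Q : S(x)=S\}$; the $\mathcal D_S$ are at most $2^k$ polytopes with pairwise disjoint interiors covering $\mathcal Q$. Because each $\mathcal R_I$ and $\mathcal R_0$ is (by hypothesis) a union of some of the $\mathcal C_i$, no boundary $\partial\mathcal P_j$ can meet the interior of a $\mathcal C_i$ without destroying this property, so $S(\cdot)$ is constant on the interior of each $\mathcal C_i$ and each $\mathcal C_i$ lies in a unique $\mathcal D_{S_i}$. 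Thus $\ell=\sum_S\#\{\,i : \mathcal C_i\subseteq\mathcal D_S\,\}$, and since $\mathcal D_S\subseteq\mathcal R_S$ (with $\mathcal R_\emptyset:=\mathcal R_0$) and each adjacent-connected component of $\mathcal R_S$ contains at most $m$ of the $\mathcal C_i$, we get $\ell\le m\sum_S c(\mathcal R_S)$, where $c(\,\cdot\,)$ denotes the number of adjacent-connected components.

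Next I would bound $c(\mathcal R_S)$. From $\mathcal R_I=\bigcup_{i\in I}\overline{\mathcal P_i\setminus\bigcup_{j\notin I}\mathcal P_j}$ and $\mathcal R_0=\overline{\mathcal Q\setminus\bigcup_j\mathcal P_j}$, each closed set displayed is a union of the cells $\mathcal D_S$, hence of the $\mathcal C_i$, and is pure-dimensional, so it is the closure of its interior; for such a region, pushing an interior path into general position with respect to the subdivision shows that topological connectedness implies adjacent-connectedness. Hence it suffices to prove that each $\mathcal P_i\setminus\bigcup_{j\notin I}\mathcal P_j$ is connected: then $c(\mathcal R_I)\le|I|\le k$ and $c(\mathcal R_0)\le 1$, so $\ell\le m k 2^k=:M(k,m)$. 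This reduces the lemma to the geometric statement: \emph{if $\mathcal P\subseteq[0,1]^p$ is a convex polytope with $\mathcal P\subseteq\mathcal Q$, and $U\subseteq\mathcal Q$ satisfies $(U+\mathbb R_+^p)\cap\mathcal Q\subseteq U$, then $\mathcal P\setminus U$ is connected}; indeed $U=\bigcup_{j\notin I}\mathcal P_j$ is upward-closed in $\mathcal Q$, and $\mathcal P$ is one of the $\mathcal P_i$ or $\mathcal Q$ itself.

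For this statement the upward-closedness of $U$ is decisive: if $z\in\mathcal P\setminus U$ and $0\le t\le 1$ then $tz\in\mathcal Q$ and $tz\le z$ (coordinatewise) with $z\notin U$ force $tz\notin U$; hence, if $0\in\mathcal P$ then $\mathcal P\setminus U$ contracts to $\{0\}$, and if $0\notin\mathcal P$ the radial contraction carrying each $z\in\mathcal P$ to the first point where the segment from the origin to $z$ meets $\mathcal P$ restricts to a deformation retraction of $\mathcal P\setminus U$ onto $(\partial_-\mathcal P)\setminus U$, where $\partial_-\mathcal P$ is the part of $\partial\mathcal P$ visible from the origin — a topological $(p-1)$-ball. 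Since $U\cap\partial_-\mathcal P$ is an up-set in $\partial_-\mathcal P$ for the coordinatewise order, one would conclude by induction on $p$: for $p=1$ this is an interval minus a final subinterval, and for $p=2$ it holds because the silhouette of a planar convex body viewed from an exterior point is coordinatewise monotone, so the up-set meets the arc in a final segment. I expect this inductive step to be the main obstacle, since on $\partial_-\mathcal P$ the deleted set is no longer literally upward-closed in good coordinates and one must track carefully how the concave graph cut out by $\partial_-\mathcal P$ meets the polytopes $\mathcal P_j$; the complementary point — that topological connectedness of the regions $\overline{\mathcal P_i\setminus\bigcup_{j\notin I}\mathcal P_j}$ genuinely upgrades to adjacent-connectedness — is routine but should be written out.
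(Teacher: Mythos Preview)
Your overall reduction is sound up to the point where you assert that each $\mathcal P_i\setminus\bigcup_{j\notin I}\mathcal P_j$ is connected. That claim is false, and the counterexample already appears in dimension $p=2$. Take $\mathcal Q=[0,1]^2$, $\mathcal P_1=\{(x,y)\in[0,1]^2:x+y\ge 1\}$ and $\mathcal P_2=[0.2,1]^2$. Both are convex $2$-dimensional polytopes satisfying \eqref{eq:polytopes}. Then
\[
\mathcal P_1\setminus\mathcal P_2=\{x+y\ge 1,\ x<0.2\}\ \cup\ \{x+y\ge 1,\ y<0.2\},
\]
two disjoint triangles near $(0,1)$ and $(1,0)$. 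So your ``main obstacle'' is not a technical hurdle in the induction but a genuine obstruction: the geometric statement you isolate is simply not true, and no argument on $\partial_-\mathcal P$ can repair it. In particular, your radial retraction does carry $\mathcal P_1\setminus\mathcal P_2$ onto $(\partial_-\mathcal P_1)\setminus\mathcal P_2$, but the latter is the broken segment $\{x+y=1\}\setminus(0.2,0.8)^2$, already disconnected.

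The paper does not try to prove connectedness. Instead it shows first that $\mathcal R_0$ is adjacent-connected (hence $\le m$ cells $\mathcal C_i$), then bounds the number $\#\mathcal J_d$ of codimension-$d$ faces of $\mathcal R_0$ interior to $\mathcal Q$ by $(mk)^{2^{d-1}}$. The point is that every boundary face of an adjacent-connected component of any $\mathcal R_I$ that lies in the interior of $\mathcal Q$ must already be a face of $\mathcal R_0$; so the number of such components is controlled by the $\#\mathcal J_d$, not by a connectedness statement. Your framework (the cells $\mathcal D_S$ and the inequality $\ell\le m\sum_S c(\mathcal R_S)$) is compatible with this, but to finish you need to bound $c(\mathcal R_S)$ by a function of $k$ and $m$ \emph{allowing} $c(\mathcal R_S)>|S|$, and for that you will need something like the face-counting argument the paper uses.
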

\begin{proof}
If $x=(x_1,\dots,x_p)\in \mathcal R_0$ and $y=(y_1,\dots,y_p)\in \mathcal Q$ are such that $y_i\le x_i$ for all $i=1,\dots,p$, then $y\in \mathcal R_0$
 by \eqref{eq:polytopes}. Therefore, the set $\mathcal R_0$ is adjacent-connected, and hence it contains at most $m$ polytopes $\mathcal C_i$.

For any $d=1,\dots,p$, let $\mathcal J_d$ be the set of codimension $d$ faces of $\mathcal R_0$ which are not contained in the boundary of $\mathcal Q$. Since the polytopes $\mathcal C_i$ and $\mathcal P_j$ are convex, and $\mathcal R_0$ contains at most $m$ polytopes $\mathcal C_i$, it follows that each $\mathcal P_j$ contains at most $m$ elements of $\mathcal J_{1}$, and hence $\#\mathcal J_1\leq mk$. Now, if $d>1$, each element of $\mathcal J_{d-1}$ contains at most $\#\mathcal J_{d-1}$ elements of $\mathcal J_d$, and therefore $\#\mathcal J_d\leq (\#\mathcal J_{d-1})^2$. This shows that $\#\mathcal J_d\le (mk)^{2^{d-1}}$.

Since
$$\textstyle\bigcup_{i\in I} \mathcal P_i \big\backslash \bigcup_{j\not\in I}\mathcal P_j=\bigcup_{i\in I} \big(\mathcal P_i \big\backslash \bigcup_{j\not\in I}\mathcal P_j\big),$$
it is enough to bound the number of adjacent-connected components of each set $\mathcal P_i \backslash \bigcup_{j\not\in I}\mathcal P_j$. The statement is trivial for $k=1$, hence by induction we may assume that $I=\{1,\dots,k\}$ and, without loss of generality, that $i=1$. For any element $F\in\mathcal J_1$, set $F_1=F\cap \mathcal P_1$ and by  \eqref{eq:polytopes}  we have that $\mathcal F_1:=(F_1+\mathbb R_+^p)\cap \mathcal Q\subseteq \mathcal P_1$. Thus, it is easy to see that
$$\textstyle\mathcal P_1 \big\backslash \bigcup_{j=2}^k\mathcal P_j=\bigcup_{F\in\mathcal J_1}\big(\mathcal F_1\big\backslash\bigcup_{j=2}^k\mathcal P_j\big),$$
hence it is enough to bound the number of adjacent-connected components contained in $\mathcal F_1\backslash\bigcup_{j=2}^k\mathcal P_j$. Again by \eqref{eq:polytopes}, it is enough to bound the number of adjacent-connected components of $F_1\backslash\bigcup_{j=2}^k\mathcal P_j$, with respect to the induced topology on $F_1$. Note that every codimension $d-1$ face of an adjacent-connected component of $F_1\backslash\bigcup_{j=2}^k\mathcal P_j$ is an element of $\mathcal J_d$. Hence, the number of such adjacent-connected components is bounded by a constant which depends only on all $\#\mathcal J_d$, and the lemma follows. 
\end{proof}

\subsection{Shokurov's geography of models and adjoint rings}

\begin{definition}\label{d_lv}
Let $(X,\sum_{i=1}^pS_i)$ be a log smooth projective pair, where $S_1,\dots,S_p$ are distinct prime divisors, and let $V=\sum_{i=1}^p \mathbb RS_i\subseteq \Div_{\mathbb R}(X)$. Let $0<\varepsilon<1/2$. We denote
$$\textstyle\mathcal L(V)=\big\{\sum_{i=1}^p a_i S_i\in V\mid a_i\in [0,1]\big\},\quad \mathcal E(V)=\{\Delta\in \mathcal L(V)\mid K_X+\Delta\sim_{\mathbb R} D\ge0\},$$
and 
\begin{align*}
\mathcal L_\varepsilon(V)&=\big\{\sum_{i=1}^p a_iS_i\in V\mid a_i\in [\varepsilon,1-\varepsilon]\big\},\\ 
\mathcal L_\varepsilon^{\can}(V)&=\{\Delta\in \mathcal L_\varepsilon(V)\mid (X,\Delta)\text{ is canonical}\}.
\end{align*}
The sets $\mathcal L(V)$, $\mathcal L_\varepsilon(V)$ and  $\mathcal L_\varepsilon^{\can}(V)$ are clearly rational polytopes. Note that since $(X,\sum_{i=1}^pS_i)$ is log smooth, if the dimension of $\mathcal L_{\varepsilon}^{\can}$ is $p$ and $\Delta$ is contained in its interior, then $(X,\Delta)$ is terminal. Finally, given a birational contraction $f\colon \rmap X.Y.$, let $\mathcal C_{f}(V)$ denote the closure in $\mathcal L(V)$ in the standard topology of the set
$$\{\Delta\in \mathcal E(V) \mid f \text { is a log terminal model of } (X,\Delta)\}.$$
\end{definition}

The following is \cite[Theorem 3.4]{CS11}.

\begin{theorem}\label{t_shokurov}
Assume the MMP in dimension $n$. Let $(X,\sum_{i=1}^pS_i)$ be a log smooth projective pair, where $S_1,\dots,S_p$ are distinct prime divisors, and let $V=\sum_{i=1}^p \mathbb RS_i\subseteq \Div_{\mathbb R}(X)$.

Then there exist birational contractions $f_i\colon\rmap  X.Y_i.$ for $i=1,\dots,k$, such that $\mathcal C_{f_1}(V),\dots,\mathcal C_{f_k}(V)$ are rational polytopes and 
$$\mathcal E(V)=\bigcup_{i=1}^k \mathcal C_{f_i}(V).$$ 
In particular, $\mathcal E(V)$ is a rational polytope. 
\end{theorem}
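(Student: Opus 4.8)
We want to show that $\mathcal{E}(V)$ decomposes as a finite union of rational polytopes $\mathcal{C}_{f_i}(V)$, where each $\mathcal{C}_{f_i}(V)$ is the closure of the locus of $\Delta\in\mathcal{E}(V)$ for which $f_i$ is a log terminal model. This is Theorem 3.4 of \cite{CS11}, so the proof I have in mind follows Shokurov's original argument as presented there.

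**Approach.** First I would reduce to the case where $\mathcal{E}(V)$ has dimension $p$, since otherwise we may pass to the affine span of $\mathcal{E}(V)$, which is itself a rational affine subspace (the equation $K_X + \Delta \sim_{\mathbb{R}} D \geq 0$ is rational), and argue inductively on dimension. The heart of the proof is a local analysis near a point $\Delta_0 \in \mathcal{E}(V)$: I want to show that a neighborhood of $\Delta_0$ in $\mathcal{E}(V)$ is covered by finitely many polytopes of the form $\mathcal{C}_f(V)$. Run the $(K_X+\Delta_0)$-MMP (using the assumed MMP in dimension $n$) to obtain a log terminal model $f\colon X \dashrightarrow Y$. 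The key stability statement is: for $\Delta$ in a sufficiently small neighborhood of $\Delta_0$ with $\Delta \in \mathcal{E}(V)$, the same birational contraction $f$ (or one of finitely many related contractions obtained by running MMP's for the finitely many divisorial faces through $\Delta_0$) remains a log terminal model, and $K_Y + f_*\Delta$ is nef exactly on a rational polyhedral subcone. This rests on the fact that nefness is a closed condition cut out by finitely many curve classes near $\Delta_0$ (length of extremal rays / boundedness of the relevant extremal contractions), so the nef locus is locally a rational polytope.

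**Key steps in order.** (1) By compactness of $\mathcal{L}(V) \supseteq \mathcal{E}(V)$, it suffices to prove the local finiteness statement near each point $\Delta_0$. (2) For $\Delta_0 \in \mathcal{E}(V)$ with $K_X+\Delta_0$ not big, handle separately via the ample model / the Iitaka fibration, reducing the dimension of the relevant parameter space; for $K_X+\Delta_0$ big, run the MMP to get a log terminal model $\varphi_0\colon X \dashrightarrow Y_0$. (3) Show $\varphi_0$ is $(K_X+\Delta)$-negative for all $\Delta$ near $\Delta_0$: the divisors contracted are those in $\Bs(K_X+\Delta_0)$, and by the openness of the "big and on the good side of finitely many rays" conditions this persists in a neighborhood, up to passing to finitely many further contractions. (4) On $Y_0$, the condition that $K_{Y_0} + (\varphi_0)_*\Delta$ be nef is, near $(\varphi_0)_*\Delta_0$, equivalent to being non-negative against the finitely many extremal curve classes contracted by $(K_{Y_0}+(\varphi_0)_*\Delta_0)$-negative extremal contractions — these are finite in number by the Cone Theorem and have bounded length — so this carves out a rational polytope. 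Crossing a wall of this polytope, run another step of the MMP and induct; termination (in dimension $n$, which we have assumed) guarantees the process stops, yielding finitely many contractions $f_i$ locally. (5) Assemble the local pictures into a global finite covering $\mathcal{E}(V) = \bigcup_i \mathcal{C}_{f_i}(V)$, and note each $\mathcal{C}_{f_i}(V)$ is a finite union of the local rational polytopes, hence a rational polytope itself. Finiteness of the total number of $f_i$ follows from compactness plus the local finiteness.

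**Main obstacle.** The delicate point is step (3)–(4): controlling how the log terminal model and its nef cone vary as $\Delta$ moves, i.e., proving that only finitely many birational contractions arise and that each $\mathcal{C}_{f_i}(V)$ is genuinely closed and polyhedral rather than, say, a countable union of polytopes accumulating somewhere. This is exactly where one needs the full strength of the MMP with scaling, termination, and the boundedness of extremal rays — the argument is not purely formal convex geometry, and the rationality of the walls depends on the rationality inherent in the Cone Theorem and in Lemma~\ref{l_comb}-type estimates. Since all of this is assembled in \cite[Theorem 3.4]{CS11} under the stated hypothesis (MMP in dimension $n$), I would simply invoke that result; the proof sketch above is the content being cited.
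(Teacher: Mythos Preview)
The paper does not prove this theorem; it simply states it as \cite[Theorem 3.4]{CS11} and moves on. Your proposal ultimately does the same thing---you sketch the Shokurov/CS11 argument and then explicitly invoke \cite[Theorem 3.4]{CS11}---so your approach matches the paper's treatment exactly.
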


Together with the relative version of the Cone conjecture \cite{Kawamata97a}, the relative version of the previous theorem implies finiteness of minimal models up to isomorphism. The following theorem is folklore, but we include the proof for the benefit of the reader. The proof below came out of discussions with C. Xu.

\begin{theorem}\label{t_finiteness}
Assume the MMP in dimension $n$ and the relative Cone conjecture in dimension $n$. Let $X$ be a terminal projective variety of dimension $n$. 

Then the number of minimal models of $X$ is finite up to isomorphism.
\end{theorem}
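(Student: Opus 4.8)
The plan is the following. If $K_X$ is not pseudoeffective, then $X$ has no minimal model and the statement is vacuous, so assume that $K_X$ is pseudoeffective. By the assumed MMP in dimension $n$, the pair $(X,0)$ has a log terminal model; fix one, $\phi\colon\rmap X.Y.$, so that $Y$ is $\mathbb Q$-factorial and, by Lemma~\ref{l_canonical}, terminal. Recall that, under the MMP in dimension $n$, the prime divisors contracted by a log terminal model of $(X,0)$ are precisely those contained in $\Bs(K_X)$; hence for any two minimal models $Y_1$ and $Y_2$ of $X$ the induced birational map $\rmap Y_1.Y_2.$ neither contracts nor extracts a divisor and is therefore an isomorphism in codimension one. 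In particular every minimal model of $X$ is a small $\mathbb Q$-factorial modification $\alpha\colon\rmap Y.Y'.$ of $Y$, and then $K_{Y'}=\alpha_*K_Y$.

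Next I would pass to the canonical fibration of $Y$. Since $K_Y$ is nef, the existence of the ample model of $(Y,0)$ — part of our MMP hypothesis — forces $K_Y$ to be semiample; let $\pi\colon\map Y.S.$ be the associated fibration and fix an ample $\mathbb Q$-divisor $A$ on $S$ with $K_Y\sim_{\mathbb Q}\pi^*A$, so that $K_Y\equiv_\pi 0$ and $\pi$ is a klt Calabi--Yau fibration. The crucial claim is that the minimal models of $X$ are exactly the minimal models of $Y$ over $S$. Indeed, if $Y'$ is a minimal model of $X$, then $K_{Y'}$ is nef and, again by our hypothesis, semiample; the isomorphism in codimension one $\alpha\colon\rmap Y.Y'.$ identifies the section rings of $K_Y$ and $K_{Y'}$, hence identifies the two associated fibrations over a common base $S$, and $\alpha$ becomes a birational map over $S$ which is automatically $K_Y$-trivial over $S$, so $\alpha$ is a minimal model of $Y$ over $S$. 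Conversely, a minimal model of $Y$ over $S$ is a small $\mathbb Q$-factorial modification $\alpha\colon\rmap Y.Y'.$ over $S$, and then $K_{Y'}=\alpha_*\pi^*A$ descends to an ample divisor on $S$, so $K_{Y'}$ is nef and $Y'$ is a minimal model of $X$. Finally, an isomorphism between two minimal models of $X$ carries the canonical class to the canonical class, hence intertwines the two semiample fibrations and induces an automorphism of $S$; therefore the number of minimal models of $X$ up to isomorphism is at most the number of minimal models of $Y$ over $S$ up to isomorphism over $S$.

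It then remains to bound this last number, and here I would argue as in \cite{Kawamata97a}. The nef cones of the marked minimal models of $Y/S$ have pairwise disjoint interiors and tile a convex cone $\mathcal M\subseteq N^1(Y/S)$, the cone of effective movable classes over $S$; by the relative version of Theorem~\ref{t_shokurov} this chamber decomposition is locally rational polyhedral, and the group of birational self-maps of $Y$ over $S$ that are isomorphisms in codimension one acts on $\mathcal M$ permuting the chambers, with two chambers in the same orbit precisely when the corresponding minimal models are isomorphic over $S$. The relative Morrison--Kawamata Cone conjecture — which we are assuming — provides a rational polyhedral fundamental domain for this action, and such a domain meets only finitely many chambers; hence there are only finitely many minimal models of $Y/S$ up to isomorphism over $S$, and the theorem follows.

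The main obstacle is the middle step, the identification of the minimal models of $X$ with those of the Calabi--Yau fibration $Y/S$: extracting semiampleness of $K_Y$ from the MMP hypothesis is immediate, but one then has to check with some care, using the bookkeeping of small modifications, section rings and relative base loci, that passing from $Y$ to a small modification $Y'$ does not change whether the canonical class is nef. Once this translation is in place, the Cone conjecture finishes the argument in a now-standard way.
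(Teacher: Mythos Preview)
Your proposal is correct and follows essentially the same route as the paper: reduce to a fixed minimal model $Y$ with its canonical fibration $Y\to S$, identify the other minimal models with small $\mathbb Q$-factorial modifications of $Y$ over $S$, and then use the Cone conjecture together with the relative Shokurov geography to conclude finiteness. The paper packages the final step slightly differently---it recovers each minimal model as $\Proj_S R(Y/S,D)$ for a movable divisor $D$, moves $D$ into the fundamental domain $\Pi$ via $\Bir(Y/S)$, then explicitly lifts $\Pi$ to a finite-dimensional space $V\subseteq\Div_{\mathbb R}(Y)$ and applies \cite[Theorem 3.4]{CS11} and \cite[Theorem 4.2]{KKL12} there---whereas you phrase it via the nef chamber decomposition of the movable cone; but the content is the same.
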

\begin{proof}
Replacing $X$ by a minimal model, we may assume that $K_X$ is semiample, and let $X\longrightarrow S$ be the canonical model. If $Y$ is another minimal model of $X$ and $A\subseteq Y$ is a very ample divisor over $S$, then the map $\varphi\colon X\dashrightarrow Y$ is an isomorphism in codimension $1$, the divisor $D=\varphi^*A\subseteq X$ is movable over $S$ and $Y\simeq\Proj_S R(X/S,D)$. Let $\Pi$ be a fundamental domain for the action of $\Bir(X/S)$ on the cone $\overline{\mov}(X/S)\cap\Eff(X/S)$. Then there exists $g\in\Bir(X/S)$ such that $g^*D\in\Pi$, and we have $R(X/S,D)\simeq R(X/S,g^*D)$ since $g$ is an isomorphism in codimension $1$. Replacing $D$ by $g^*D$, we may assume that $D\in\Pi$.

Let $D_1,\dots,D_r$ be effective divisors whose classes generate $\Pi$, let $S_1,\dots,S_k$ be all the prime divisors in the support of $\sum D_i$, let $V=\sum_{i=1}^p \mathbb RS_i\subseteq \Div_{\mathbb R}(X)$, and let $\Pi'\subseteq V$ be the inverse image of $\Pi$ under the natural map $V\longrightarrow N^1(X)_\mathbb R$. Note that $D$ belongs to set $\Pi'\cap\mathbb R_+\mathcal L(V)$ since the pair $(X,\varepsilon D)$ is klt for some $0<\varepsilon\ll1$. Since $K_X$ is trivial over $S$, by \cite[Theorem 3.4]{CS11} and \cite[Theorem 4.2]{KKL12}, there are finitely many cones $\mathcal C_i\subseteq V$ and contractions $f_i\colon X\dashrightarrow Z_i$ for $i=1,\dots,k$, such that $\Pi'\cap\mathbb R_+\mathcal L(V)=\bigcup\mathcal C_i$ and if $\Delta\in\mathcal C_i\cap \mathcal L(V)$, then $f_i$ is the ample model of $K_X+\Delta$ over $S$. In particular, there exists a cone $\mathcal C_i$ which contains $D$, and hence $Y\simeq Z_i$.
\end{proof}

Some of the polytopes $\mathcal C_{f_i}(V)$ are of special importance in this paper.

\begin{definition}\label{def:chambers}
Assume the MMP in dimension $n$. Let $(X,\sum_{i=1}^pS_i)$ be a log smooth projective pair of dimension $n$, where $S_1,\dots,S_p$ are distinct prime divisors and let $V=\sum_{i=1}^p \mathbb RS_i\subseteq \Div_{\mathbb R}(X)$. Let $f\colon \rmap X.Z.$ be a birational contraction. If $\mathcal C_i=\mathcal C_{f_i}(V)\cap\mathcal L_\varepsilon^{\can}(V)$ a polytope of dimension $p$ which intersects the interior of $\mathcal L_\varepsilon^{\can}(V)$, then $\mathcal C_i$ is called a \emph{terminal chamber} in $V$. 
\end{definition}

We recall the definition of adjoint rings from \cite{CL10a}.

\begin{definition}\label{d_adjointrings}
If $X$ is a smooth projective variety and $D_1,\dots,D_k$ are $\mathbb Q$-divisors on $X$, then we define 
$$\mathfrak R=R(X;D_1,\dots,D_k)=\bigoplus_{(m_1,\dots,m_k)\in \mathbb N^k}\textstyle H^0\big(X,\ring X.(\rfdown \sum m_iD_i.)\big).$$
The ring $\mathfrak R$ is \emph{generated in degree} $m$ if for every generator $\sigma$ of $\mathfrak R$ there exist $a_1,\dots,a_k\in \{0,\dots,m\}$ such that $\sigma\in H^0(X,\ring X.(\sum_{i=1}^k a_iD_i))$. 

Furthermore, if $\mathcal S\subseteq \Div_{\mathbb Q}(X)$ is a finitely generated monoid, then we define
$$R(X,\mathcal S)=\sum_{D\in \mathcal S}H^0(X,\ring X.(\rfdown D.)).$$
If $\mathcal P\subseteq \Div_{\mathbb R}(X)$ is a rational polyhedral cone, then $\mathcal S=\mathcal P\cap \Div(X)$ is a finitely generated monoid and we can define the adjoint ring associated to $\mathcal P$ as $R(X,\mathcal S)$. If divisors $D_1,\dots,D_k$ generate $\mathcal S$, there is the natural projection $\map R(X;D_1,\dots,D_k).R(X,\mathcal S).$, and we say that $R(X,\mathcal S)$ is generated in degree $m$ if $R(X;D_1,\dots,D_k)$ is. 
\end{definition}

The following conjecture seems to be folklore.

\begin{conjecture}\label{con1}
Let $(X,\Delta)$ be a projective klt pair of dimension $n$ with $K_X+\Delta$ nef, and let $k$ be a positive integer such that $k(K_X+\Delta)$ is Cartier. 

Then there exists a positive integer $m=m(n,k)$ such that the linear system $|m(K_X+\Delta)|$ is basepoint free.
\end{conjecture}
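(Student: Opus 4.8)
Since Conjecture \ref{con1} is open, I only outline the strategy I would pursue; it proceeds by a dichotomy on the bigness of $K_X+\Delta$. If $K_X+\Delta$ is big, the plan is to invoke Koll\'ar's effective base point freeness theorem directly: the divisor $H=k(K_X+\Delta)$ is nef and Cartier, and $aH-(K_X+\Delta)=(ak-1)(K_X+\Delta)$ is nef and big for every integer $a\ge1$ (taking $a=2$ when $k=1$), so that $|mH|$ is basepoint free for all $m$ exceeding an explicit bound $M(n)$ depending only on $n$; hence $m(n,k)=k\lceil M(n)\rceil$ works in this case.

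When $K_X+\Delta$ is not big, the first step must be abundance: to know that $K_X+\Delta$ is semiample. This holds for $n\le3$ but is open in general; granting it, let $f\colon\map X.Z.$ be the fibration defined by $K_X+\Delta$, so that $\dim Z<n$ and $K_X+\Delta\sim_{\mathbb Q}f^*A$ for an ample $\mathbb Q$-divisor $A$ on $Z$. It then suffices to bound $m$ so that $|mA|$ is basepoint free on $Z$.

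The next step is to apply the Fujino--Mori canonical bundle formula, writing $A\sim_{\mathbb Q}K_Z+B_Z+M_Z$ with $(Z,B_Z)$ klt (or, in the worst case, log canonical, where one uses instead Fujino's effective base point freeness for lc pairs) and $M_Z$ nef and $\mathbb Q$-Cartier. If the Cartier index of $K_Z+B_Z+M_Z$ is bounded by some $k'=k'(n,k)$, then, since $(m-1)A+M_Z$ is nef and big for $m\ge2$, applying Koll\'ar's effective base point freeness on $Z$ bounds $m$ in terms of $\dim Z\le n$ and $k'$, hence in terms of $n$ and $k$.

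This reduces Conjecture \ref{con1} to two inputs: (a) the Abundance conjecture for klt pairs in dimension $\ge4$, needed only to produce the fibration $f$; and (b) an effective canonical bundle formula, that is, a bound depending only on $n$ and $k$ for the denominators of the discriminant $B_Z$ and of the moduli part $M_Z$. I expect (b) to be the main obstacle: the coefficients of $B_Z$ are governed by the ACC for log canonical thresholds, but controlling the moduli part amounts to a boundedness statement for the fibres of the Iitaka fibration of $K_X+\Delta$ --- closely tied to the effective Iitaka fibration and $b$-semiampleness conjectures --- and is at present understood only in low relative dimension.
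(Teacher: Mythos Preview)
The paper does not prove Conjecture~\ref{con1}; it is stated as an open conjecture, and the paper offers only a short paragraph of commentary after it. Your outline is therefore appropriate in tone and largely agrees with the paper's remarks: the big case is exactly Koll\'ar's effective basepoint freeness, as the paper also says, and your reduction in the non-big case via the Fujino--Mori canonical bundle formula is precisely the circle of ideas around \cite[Conjecture~7.13]{PS09} to which the paper points for intermediate Kodaira dimension.

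One point the paper singles out that your outline does not isolate is the case $\kappa(X,K_X+\Delta)=0$, which the authors explicitly flag as the main difficulty. Here your reduction degenerates: $Z$ is a point, so the condition that $|mA|$ be basepoint free on $Z$ is vacuous, yet one still needs an $m=m(n,k)$ with $m(K_X+\Delta)\sim 0$ (linear, not merely $\mathbb Q$-linear, equivalence). This amounts to bounding the torsion index of the numerically trivial Cartier divisor $k(K_X+\Delta)$ in $\Pic(X)$ for klt log Calabi--Yau pairs of given dimension and Cartier index --- a boundedness question that is not obviously subsumed by your items (a) and (b), since the canonical bundle formula carries no content when the base is a point. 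It would strengthen your discussion to name this case separately, as the paper does.
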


We spend a few words on this conjecture. It holds on surfaces by \cite[Lemma 2.6]{CZ12}. In general, the conjecture is clearly a post-Abundance problem. One of the main difficulties is when $\kappa(X,K_X+\Delta)=0$. If $K_X+\Delta$ is big, then it follows from Koll\'ar's effective basepoint free theorem \cite{Kollar93b} and if $K_X+\Delta$ is of intermediate Kodaira dimension, then the conjecture is related to \cite[Conjecture 7.13]{PS09}.

\begin{proposition}\label{p_ltog} 
Assume the MMP in dimension $n$ and Conjecture \ref{con1} in dimension $n$. Let $(X,\sum_{i=1}^pS_i)$ be a log smooth projective pair, where $S_1,\dots,S_p$ are distinct prime divisors, and let $V=\sum_{i=1}^p \mathbb RS_i\subseteq \Div_{\mathbb R}(X)$. Let $B_1,\dots,B_m\in\mathcal E(V)$, and denote by  $\mathcal C$ the rational polytope spanned by all $B_i$. Let $f_i\colon \rmap X.Z_i.$ be birational contractions as in Theorem \ref{t_shokurov}, and denote $\mathcal C_i=\mathcal C\cap \mathcal C_{f_i}(V)$. Let $\ell$ be a positive integer such that if $B$ is an extreme point of some $\mathcal C_j$, then $\ell B$ is integral. 

Then there is a constant $M=M(n,p,\ell)$ such that the ring 
$$\mathfrak R=R\big(X;M(K_X+B_1),\dots,M(K_X+B_m)\big)$$
is generated in degree $n+2$. 
\end{proposition}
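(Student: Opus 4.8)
The plan is to reduce the statement about the big adjoint ring $\mathfrak{R}$ to the union of the finitely many subchambers $\mathcal{C}_i = \mathcal{C}\cap\mathcal{C}_{f_i}(V)$ produced by Theorem \ref{t_shokurov}, on each of which $f_i$ is simultaneously a log terminal model. The key observation is that on $\mathcal{C}_i$ the divisor $K_X+\Delta$ has the same ``positive part'' governed by $f_i$, so after pushing forward via $f_i$ one lands on a $\mathbb{Q}$-factorial variety $Z_i$ where $K_{Z_i}+(f_i)_*\Delta$ is nef for every $\Delta$ in the (pushed-forward) chamber, and one can try to invoke the effective basepoint-free bound of Conjecture \ref{con1}.

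First I would set up the combinatorial skeleton: choose $M_0$ divisible enough that for every $i$ and every extreme point $B$ of $\mathcal{C}_i$, $M_0 B$ is integral (possible since $\ell B$ is integral by hypothesis, so $M_0$ a multiple of $\ell$ works), so that each $\mathcal{C}_i$ is a rational polytope whose vertices scale to lattice points under $M_0$. Since $\mathfrak{R}$ is an adjoint ring associated to the cone spanned by the $M(K_X+B_j)$, its generation in a fixed degree is equivalent, by the reduction arguments of \cite{CL10a} (and Definition \ref{d_adjointrings}), to a uniform bound on the degree needed to generate each of the ``chamber rings'' $R(X,\mathcal{S}_i)$ where $\mathcal{S}_i$ is the monoid of integral points in $M_0\cdot\mathbb{R}_+\mathcal{C}_i$; gluing finitely many such bounds costs only a controlled multiplicative factor depending on $p$ and the number of chambers (which is itself bounded in terms of $p$ and $\ell$ — here one may need Lemma \ref{l_comb} to bound the combinatorics of the chamber decomposition purely in terms of $M$, $p$, $\varepsilon$, but in the present statement we are allowed dependence on $p$ and $\ell$ only, so the bound on the number of chambers must come from the geometry, e.g. from boundedness of the $Z_i$). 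Then for a single chamber: pick $\Delta$ an extreme point of $\mathcal{C}_i$, push forward to $Z_i$, where $D_i := K_{Z_i}+(f_i)_*\Delta$ is nef and $M_0 D_i$ is Cartier; apply Conjecture \ref{con1} on $Z_i$ to get $m = m(n, M_0)$ with $|m M_0 D_i|$ basepoint free, hence $mM_0(K_X+\Delta)$ defines the ample model and its sections lift the generators. Since there are finitely many extreme points, taking $M$ to be a common multiple of $mM_0$ over all chambers (bounded in terms of $n$, $p$, $\ell$) one arranges that $\mathfrak{R}$ is generated by its pieces in multidegrees coming from vertices, and a standard Gordan/Carathéodory argument in $\mathbb{Z}^m$ yields generation in degree $n+2$ (the $+2$ absorbing the Carathéodory overhead in dimension $\le p$ together with the one extra step needed to reach the nef cone, exactly as in \cite[Section 5]{CL10a}).

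**The hard part will be** two things. First, one must ensure that the target varieties $Z_i$ are terminal (so that Conjecture \ref{con1}, stated for klt pairs, applies — here one uses Lemma \ref{l_canonical}, since each $f_i$ is $(K_X+\Delta)$-negative from a terminal pair, so $(Z_i,(f_i)_*\Delta)$ is terminal) and that $M_0(K_{Z_i}+(f_i)_*\Delta)$ is genuinely Cartier, not merely $\mathbb{Q}$-Cartier, uniformly across the chamber — this forces $M_0$ to depend on the index of the $\mathbb{Q}$-factorial varieties $Z_i$, which is why the hypothesis that $\ell$ clears all extreme points is essential: it pins the Cartier index via Lemma \ref{l_bplus}-type control of the exceptional loci. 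Second, and more delicate, is the passage from ``basepoint-free on each vertex'' to ``ring generated in a fixed degree'': one needs that the section rings vary nicely over $\mathcal{C}_i$, which is where the finite generation machinery of \cite{CL10a} — itself relying on Kawamata–Viehweg vanishing, as emphasised in the introduction — does the real work, and one must track the degree bound through the induction on $p$ carefully to land exactly at $n+2$ rather than something larger. I expect the effective degree bookkeeping, rather than any single geometric input, to be the principal obstacle.
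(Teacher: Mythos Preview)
Your overall architecture is the paper's: decompose $\mathcal C$ into the chambers $\mathcal C_i$, push forward to $Z_i$ where the adjoint divisors at the vertices become nef, apply Conjecture~\ref{con1} to get basepoint freeness, and deduce generation in degree $n+2$. But you introduce two complications that the paper avoids, and one of them is a genuine wrong turn.

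First, you try to bound the number of chambers $\mathcal C_i$ in terms of $p$ and $\ell$, and when you cannot locate such a bound you speculate about ``boundedness of the $Z_i$''. This is unnecessary. The paper's observation is that every extreme point $B_{ij}$ of every $\mathcal C_i$ lies in the finite set $\big(\tfrac{1}{\ell}\mathbb Z\cap[0,1]\big)^p$, which has at most $(\ell+1)^p$ elements. Hence there is a single constant $M'=M'(p,\ell)$ such that all the relevant generators are of the form $M'(K_X+B)$ with $B$ ranging over this fixed finite set, irrespective of how many chambers there are or what the $Z_i$ look like. Conjecture~\ref{con1} then produces one $M=M(n,M')$ making $M(f_i)_*(K_X+B_{ij})$ basepoint free for every $i,j$ simultaneously. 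No count of chambers enters.

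Second, your ``hard part'' about tracking degree bounds through the induction of \cite{CL10a} is not how the paper proceeds. Once the pushed-forward vertex divisors on each $Z_i$ are basepoint free, the paper simply invokes \cite[Proposition~2.11]{CZ12}, which says directly that the multigraded section ring of basepoint-free divisors on a variety of dimension $n$ is generated in degree $n+2$. This gives the bound for each $R(X,\mathcal C_i\cap\mathcal M^{(M/\ell)})\simeq R\big(Z_i,(f_i)_*(\mathcal C_i\cap\mathcal M^{(M/\ell)})\big)$, and since the $\mathcal C_i$ cover $\mathcal C$, the same bound holds for $\mathfrak R$. There is no delicate bookkeeping.

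Finally, your concern about arranging $(Z_i,(f_i)_*\Delta)$ terminal via Lemma~\ref{l_canonical} is misplaced: Conjecture~\ref{con1} is stated for klt pairs, and $(Z_i,(f_i)_*\Delta)$ is automatically klt because $f_i$ is a log terminal model.
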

\begin{proof}
Denote $D_i=\ell(K_X+B_i)$ and $\mathcal M=\sum_{i=1}^p\mathbb Z D_i$, and for each positive integer $N$ denote $\mathcal 
M^{(N)}=\sum_{i=1}^p\mathbb Z ND_i$. We first claim that there exist a constant $M'=M'(p,\ell)$ and generators
of $\mathcal M\cap\mathbb R_+(K_X+\mathcal C_i)$ for all $i$ which are of the form $M'(K_X+B)$ for some $B\in\mathcal L(V)$. 
Indeed, there is an obvious isomorphism $V\simeq \mathbb R^p$ which sends $\mathcal L(V)$ to the unit hypercube $[0,1]^p$. 
By assumption, each cone $\mathcal C_i$ is spanned by some $B_{ij}$, and each $B_{ij}$ corresponds to a point in $
(\frac1\ell\mathbb Z\cap[0,1])^p$. Hence, there are finitely many choices for these generators, and the claim follows. 

Therefore, by Conjecture \ref{con1} there exists a positive integer $M=M(n,M')$ such that for every $i$ and $j$, the divisor $M(f_i)_*(K_X+B_{ij})$ is basepoint free. In particular, each of the rings 
$$R(X,\mathcal C_i\cap\mathcal M^{(M/\ell)})\simeq R(Z_i,(f_i)_*(\mathcal C_i\cap\mathcal M^{(M/\ell)}))$$
is generated in degree $n+2$ by \cite[Proposition 2.11]{CZ12}. But then it is clear that the ring $\mathfrak R$ is generated in degree $n+2$.
\end{proof}
\section{Effective finite generation for adjoint rings on surfaces}

In this section we prove an effective version of the finite generation of adjoint rings on surfaces, cf.\ Definition \ref{d_adjointrings}, and we give a positive answer to Question \ref{question} in dimension $2$.

\begin{theorem}\label{t_efgs}
Let $(X,\sum_{i=1}^p S_i)$ be a log smooth projective surface pair, where $S_1,\dots,S_p$ are distinct prime divisors. 
Let $B_1,\dots,B_\ell$ be $\mathbb Q$-divisors on $X$ such that $\Supp B_i=\sum_{i=1}^p S_i$ and $\rfdown B_i.=0$ for every $i$, and let $k$ be a positive integer such that all $kB_i$ are Cartier.

Then there exists a positive integer $m=m(p,k)$ such that the ring 
$$R(X;m(K_X+B_1),\dots,m(K_X+B_\ell))$$
is generated in degree  $4$. 
\end{theorem}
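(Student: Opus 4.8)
The plan is to reduce the effective finite generation statement to a combinatorial bound on the number of log terminal (equivalently minimal) models appearing in Shokurov's geography of the pair, and then to apply Proposition \ref{p_ltog}. First I would pass to the polytope $\mathcal L_\varepsilon(V)$ where $V=\sum_{i=1}^p\mathbb R S_i$: since each $B_i$ has $\lfloor B_i\rfloor=0$ and support exactly $\sum S_i$, after fixing $\varepsilon=1/(2k)$ every $B_i$ lies in $\mathcal L_\varepsilon(V)$, and moreover $\mathcal L_\varepsilon(V)\subseteq\mathcal L_\varepsilon^{\can}(V)$ because $(X,\Delta)$ is automatically canonical (indeed terminal) for a log smooth surface pair with all coefficients $<1$. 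The MMP is known unconditionally in dimension $2$, so Theorem \ref{t_shokurov} gives finitely many birational contractions $f_1,\dots,f_N$ with $\mathcal E(V)=\bigcup_j\mathcal C_{f_j}(V)$; the key point is to bound $N$ — or rather the number of chambers meeting the rational polytope $\mathcal C$ spanned by the $B_i$ — by a constant depending only on $p$ and $k$.

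The heart of the argument is to bound the number of terminal chambers in $\mathcal L_\varepsilon(V)$ purely in terms of $p$ and $\varepsilon$. On surfaces this is tractable via Lemma \ref{l_fact}: a log terminal model of $(X,\Delta)$ factors as a composite $g$ of contractions of $(-1)$-curves followed by $h$ contracting only curves in $\Supp g_*\Delta$. The chamber $\mathcal C_{f}(V)$ is determined by which curves get contracted, and a curve $C$ is contracted on the $(K_X+\Delta)$-model precisely when $(K_X+\Delta)\cdot C\le 0$ (respectively $<0$ for $g$); each such inequality is linear in the coefficients $a_i$ of $\Delta$, with integer coefficients $S_i\cdot C$ and $K_X\cdot C$. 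Thus the chamber decomposition of $\mathcal L_\varepsilon(V)$ is cut out by a hyperplane arrangement, and it suffices to bound the number of relevant hyperplanes, i.e. the number of curves $C$ that can be contracted for some $\Delta\in\mathcal L_\varepsilon(V)$. Such a $C$ must satisfy $(K_X+\Delta)\cdot C\le0$ while $K_X\cdot C\ge -1$ may fail, but one shows $C$ is forced to lie in the stable base locus $\Bs(K_X+\Delta)$, hence in a bounded region; combining with $\rho$-type bounds coming from the structure of the MMP on the relevant surfaces (all of which are obtained from $X$ by a bounded number of blow-downs determined by $\Supp\Delta$), the number of such curves — and hence the number of chambers — is bounded in terms of $p$ and $\varepsilon$ alone. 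Here one uses that the discrepancy/intersection data is controlled by Lemma \ref{l_echo} applied to the curves in $\sum S_i$, giving a bound on the valuations of discrepancy $<1$ and hence on the exceptional curves that can appear.

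With the number of chambers $\mathcal C_j=\mathcal C\cap\mathcal C_{f_j}(V)$ bounded by a constant $N=N(p,\varepsilon)=N(p,k)$, I would then invoke Lemma \ref{l_comb}: each $\mathcal C_j$ is a rational polytope defined by half-spaces whose coefficients $\alpha_{ij}=S_i\cdot C$, $\beta_i=-K_X\cdot C$ are integers bounded in absolute value by a constant depending only on the bounded geometry (again via the $(-1)$-curve factorisation and Lemma \ref{l_echo}), so there is an integer $\ell=\ell(p,k)$ such that $\ell B$ is integral for every extreme point $B$ of every $\mathcal C_j$. Finally Proposition \ref{p_ltog} — whose hypotheses (MMP in dimension $2$ and Conjecture \ref{con1} in dimension $2$, the latter known by \cite[Lemma 2.6]{CZ12}) are satisfied — produces a constant $M=M(2,p,\ell)=M(p,k)$ such that $R(X;M(K_X+B_1),\dots,M(K_X+B_\ell))$ is generated in degree $2+2=4$. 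Taking $m$ a suitable multiple of $M$ and $k$ gives the claim.

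The main obstacle I anticipate is the uniform bound on the number of terminal chambers: one must control, independently of $X$, how many distinct curves can be $(K_X+\Delta)$-negative as $\Delta$ ranges over $\mathcal L_\varepsilon(V)$, since a priori new $(-1)$-curves and their images can proliferate. The resolution is that every contracted curve is accounted for either by the factorisation of Lemma \ref{l_fact} through a bounded composite of blow-downs, or by the finitely many exceptional valuations of discrepancy less than $1$ supplied by Lemma \ref{l_echo}; making this precise, and checking that the resulting linear inequalities have uniformly bounded integer coefficients so that Lemma \ref{l_comb} applies, is the crux of the proof.
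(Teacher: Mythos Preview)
Your overall architecture is right and matches the paper: Shokurov's geography gives a finite chamber decomposition, Lemma \ref{l_comb} turns a bound on the integer coefficients of the wall-defining hyperplanes into a uniform bound $\ell$ on the denominators of the extreme points, and Proposition \ref{p_ltog} (with Conjecture \ref{con1} known on surfaces via \cite[Lemma 2.6]{CZ12}) finishes. But the central step---producing the bound on the wall coefficients---is where your argument breaks down, and it breaks in several ways.

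First, the walls are \emph{not} given by equations $\sum (S_i\cdot C)\,a_i = -K_X\cdot C$ for curves $C$ on $X$. A codimension-one face $\mathcal F$ of a chamber $\mathcal C_{f}(V)$ is defined by $(K_Z+f_*\Delta)\cdot\xi=0$ for a curve $\xi$ on the \emph{model} $Z$, so the coefficients are $\alpha_j=f_*S_j\cdot\xi$ and $\beta=-K_Z\cdot\xi$, computed on $Z$, not on $X$. These are not even integers in general (the surface $Z$ is only klt), and your formula $\alpha_{ij}=S_i\cdot C$ is simply wrong for interior walls. Second, you appeal to ``$\rho$-type bounds'' and to Lemma \ref{l_echo}, but the theorem hypothesises nothing about $\rho(X)$---the constant $m$ must depend only on $p$ and $k$---and Lemma \ref{l_echo} is a statement about threefolds, irrelevant here. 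Third, the detour through bounding the \emph{number} of chambers is neither necessary nor sufficient: Proposition \ref{p_ltog} needs only the denominator bound $\ell$, and bounding the number of chambers does not by itself bound $\ell$.

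What the paper actually does is bound $\alpha_j$ and $\beta$ directly on $Z$, using only universal facts about contractible curves on log terminal surfaces and the hypothesis $\mult_{S_j}B\le 1-\tfrac1k$. For each face $\mathcal F$ one finds a rational curve $\xi$ on some model $Z$ with $K_Z\cdot\xi\ge -2$ and $(K_Z+f_*\Delta)\cdot\xi=0$ on $\mathcal F$. If all $\alpha_j\ge0$ then $0<\beta\le2$. If some $\alpha_{j_0}<0$ then $\xi=f_*S_{j_0}$ with $\xi^2<0$; combining adjunction $(K_Z+\xi)\cdot\xi\ge-2$ with $(K_Z+\gamma\xi)\cdot\xi\le0$ for $\gamma\le 1-\tfrac1k$ yields $\xi^2\ge-2k$, whence $\alpha_{j_0}\ge-2k$ and $|\beta|\le 2k$. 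The rationality of $\alpha_j,\beta$ is handled separately: since any $f$-exceptional $S_i$ has $a(S_i,Z,0)\ge -1+\tfrac1k$, \cite[Proposition 2.14]{CZ12} together with Lemma \ref{l_fact} bounds the denominators of the intersection numbers on $Z$ by a constant depending only on $p,k$. This is the missing idea: the bound comes from intersection theory on the model, using the coefficient cap $1-\tfrac1k$, not from any finiteness of curves or valuations on $X$.
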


\begin{proof}
Let $V=\sum_{i=1}^p\mathbb RS_i\subseteq \Div_{\mathbb R}(X)$. 
By Theorem \ref{t_shokurov}, for $i=1,\dots,q$ there exist contraction maps $f_i\colon \map X.Z_i.$ such that $\mathcal 
E(V)=\bigcup_{i=1}^q \mathcal C_{f_i}(V)$. 
Let $\mathcal C\subseteq \mathcal L(V)$ be the rational polytope spanned by $B_1,\dots,B_\ell$ and denote  
$$\mathcal C_{i}=\mathcal C_{f_i}(V)\cap \mathcal C.$$ 
Let $C_1,\dots,C_t$ be the extreme points of all such $\mathcal C_{i}$. Then by Proposition \ref{p_ltog} and by \cite[Lemma 
2.6]{CZ12}, it is enough to show that there exists a constant $d=d(p,k)$ such that all divisors $dC_i$ are Cartier. After possibly 
adding more divisors $B\in V$ such that $\rfdown B.=0$, $\Supp B=\sum_{i=1}^p S_i$ and $kB$ is Cartier,  we may assume 
that $B_1,\dots,B_\ell$ span $V$, and that all $\mathcal C_i$ are rational polytopes of dimension $p$. 

Fix an index $1\leq i_0\leq t$. If the point $C_{i_0}$ is an extreme point of $\mathcal C$, then $kC_{i_0}$ is Cartier by assumption. 
Otherwise, if $\mathcal F$ is a codimension one face of $\mathcal C_i$ for some $i$ containing $C_{i_0}$, then $\mathcal F$ is 
either a subset of a codimension one face of $\mathcal E(V)$, or $\mathcal F$ intersects the interior of $\mathcal C\cap \mathcal E(V)$. We claim that, for any such face $\mathcal F$,  there exist $1\leq k_0\leq q$ and a rational curve $\xi$ on $Z_{k_0}$ such that, setting $f=f_{k_0}$ and $Z=Z_{k_0}$, we have $f_*S_i\cdot \xi\neq 0$ for some $i$, and that for all $\Delta\in \mathcal F$, 
\begin{equation}\label{e_Z}
(K_{Z}+f_*\Delta)\cdot \xi=0\qquad \text{and}\qquad K_Z\cdot \xi\ge -2.
\end{equation} 

Assuming the claim, let us show how it implies the theorem. Denote  
$$\beta={-}K_{Z}\cdot \xi\qquad\text{and}\qquad \alpha_{j}=f_*S_j\cdot \xi \text{ for } j=1,\dots,p.$$
Let $B\in\mathcal C_{k_0}$ be a divisor such that $f$ is a log terminal model of $(X,B)$. By assumption, the coefficients of $kB_i$ are integers smaller than $k$ for all $i=1,\dots,\ell$, hence $\mult_{S_i}B\le 1-1/k$ for all $i$. If $S_i$ is $f$-exceptional for some $1\leq i\leq p$, then we have 
$$a(S_i,Z,0)\geq a(S_i,Z,f_*B)\ge a(S_i,X,B)= -1+\frac 1 k.$$
Thus, \cite[Proposition 2.14]{CZ12} together with Lemma \ref{l_fact} implies that there is a positive integer $r$ bounded by a constant which depends only on $k$ and $p$ such that all $r\alpha_{j}$ and $r\beta$ are integers. 

The hyperplane
$$\Pi=\{\textstyle \Delta=\sum_{j=1}^p d_jS_j\in V\mid \sum_{j=1}^p \alpha_{j} d_j=\beta\}\subseteq V$$
contains the face $\mathcal{F}$ by \eqref{e_Z}. By Lemma \ref{l_comb}, it is enough to bound all $\alpha_j$ from below and 
$|\beta|$ from above by a constant depending only on $k$.

Assume first that $\alpha_{j}\ge 0$ for all $j$. Then $f_*\Delta\cdot \xi>0$ for all $\Delta\in \mathcal F$ by the claim and since all $\Delta$ have the same support, and hence 
$0<\beta=-K_Z\cdot \xi \le 2$. 

Now assume that $\alpha_{j_0}<0$ for some $j_0$. Then $\xi=f_*S_{j_0}$ and $\xi^2<0$, and $\alpha_j\geq0$ for $j\neq j_0$. Fix $\Delta\in \mathcal F$, and denote $\gamma=\mult_\xi 
f_*\Delta$. Then $0<\gamma\le 1-1/k$ and
\begin{equation}\label{eq1}
{-}\beta+\gamma\xi^2=(K_{Z}+\gamma \xi)\cdot \xi\le (K_{Z}+f_*\Delta)\cdot \xi=0
\end{equation}
by \eqref{e_Z}, while
\begin{equation}\label{eq2}
{-}\beta+\xi^2=(K_{Z}+\xi)\cdot \xi \ge {-}2
\end{equation} 
by adjunction. In particular, \eqref{eq1} and \eqref{eq2} imply $(1-\gamma)\xi^2\ge -2$. Thus,
\begin{equation}\label{eq3}
\alpha_{j_0}=\xi^2\ge {-}\frac 2 {1-\gamma}\ge {-}2k\quad\text{and}\quad \beta\leq 2+\xi^2<2,
\end{equation}
and by \eqref{eq1} and \eqref{eq3} we have
$$\beta\ge \gamma \xi^2\ge  \xi^2\ge {-}2k,$$
which gives the desired bounds.

\medskip

It remains to show the claim stated above. If $\mathcal F$ is a face of $\mathcal E(V)$, then we set $k_0=i$. Note that $K_{Z_i}+(f_i)_*\mathcal F$ belongs to the boundary of the nef cone of $Z_i$, and hence, it defines a morphism with connected fibres which contracts a rational curve $\xi$ and such that $(K_{Z_i}+(f_i)_*\Delta')\cdot\xi>0$ for any $\Delta'$ in the interior of $\mathcal C_i$.  In particular $\xi$ satisfies \eqref{e_Z} and $(f_i)_*S_i \cdot \xi\neq 0$ for some $i=1,\dots,p$.

If $\mathcal F$ is also a face of $\mathcal C_{f_j}(W)$ for some $j\neq i$, then $K_{Z_i}+(f_i)_*\Delta$ and $K_{Z_j}+(f_{j})_*\Delta$ are nef for all $\Delta\in \mathcal F_W$. Let $Z_{ij}$ be the ample model of $(X,\Delta)$ for some $\Delta$ in the interior of $\mathcal F_W$.  Then by \cite[Theorem 3.3]{HM10b} or \cite[Theorem 4.2]{KKL12} there exist birational contractions $g_i\colon \map Z_i.Z_{ij}.$ and $g_j\colon\map Z_j.Z_{ij}.$ which are not both isomorphisms since $Z_i$ and $Z_j$ are not isomorphic. Without loss of generality, we may assume that $g_i$ is not an isomorphism and we set $k_0=i$. Since $K_{Z_i}+(f_i)_*\mathcal F$ belongs to the boundary of the nef cone of $Z_i$, the map $g_i$ contracts a rational curve $\xi$, and we conclude as above.
\end{proof}

Theorem \ref{t_efgs} immediately implies the following:
\begin{corollary}
Let $(X,\sum_{i=1}^p S_i)$ be a log smooth projective surface pair, where $S_1,\dots,S_p$ are distinct prime divisors, and let $V=\sum_{i=1}^p\mathbb RS_i\subseteq \Div_{\mathbb R}(X)$. Let $0<\varepsilon<1$ be a rational number, and let $B_1,\dots,B_\ell$ be the vertices of $\mathcal L_\varepsilon(V)$.

Then there exists a positive integer $m=m(p,\varepsilon)$ such that the ring 
$$R(X;m(K_X+B_1),\dots,m(K_X+B_\ell))$$ 
is generated in degree $4$. 
\end{corollary}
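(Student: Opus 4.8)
The plan is to deduce the corollary directly from Theorem \ref{t_efgs}; the only work is to check that its hypotheses are met by the vertices of $\mathcal L_\varepsilon(V)$ and that the resulting constant depends only on $p$ and $\varepsilon$.

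First I would dispose of the degenerate cases. If $\varepsilon>1/2$ then $[\varepsilon,1-\varepsilon]=\emptyset$, so $\mathcal L_\varepsilon(V)=\emptyset$ and there is nothing to prove; if $\varepsilon=1/2$ then $\mathcal L_\varepsilon(V)$ is the single point $\tfrac12\sum_{i=1}^pS_i$. So we may assume $0<\varepsilon<1/2$, in which case both $\varepsilon$ and $1-\varepsilon$ lie in the open interval $(0,1)$. Next I would describe the vertices explicitly: under the identification $V\cong\mathbb R^p$ sending $S_i$ to the $i$-th coordinate, $\mathcal L_\varepsilon(V)$ corresponds to the box $[\varepsilon,1-\varepsilon]^p$, so its vertices are exactly the divisors $B_j=\sum_{i=1}^p c_i^{(j)}S_i$ with each $c_i^{(j)}\in\{\varepsilon,1-\varepsilon\}$. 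In particular $\Supp B_j=\sum_{i=1}^pS_i$ and $\lfloor B_j\rfloor=0$ for every $j$, because $0<\varepsilon<1-\varepsilon<1$.

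Then I would fix the arithmetic so that the constant comes out right. Write $\varepsilon=a/k$ in lowest terms, so $k$ depends only on $\varepsilon$. Then $k\varepsilon=a\in\mathbb Z$ and $k(1-\varepsilon)=k-a\in\mathbb Z$, hence each $kB_j$ is an integral divisor, and since $X$ is smooth it is Cartier. Now all hypotheses of Theorem \ref{t_efgs} hold for $(X,\sum_{i=1}^pS_i)$, the divisors $B_1,\dots,B_\ell$, and this $k$; that theorem produces a constant $m=m(p,k)$ such that $R(X;m(K_X+B_1),\dots,m(K_X+B_\ell))$ is generated in degree $4$. Since $k=k(\varepsilon)$, we have $m=m(p,\varepsilon)$, which is the claim.

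There is essentially no obstacle here beyond bookkeeping. The single point to be careful about is to take $k$ to be the exact denominator of $\varepsilon$, rather than merely some integer clearing the denominators of the $B_j$, so that $k$ — and hence the output constant $m=m(p,k)$ from Theorem \ref{t_efgs} — depends only on $p$ and $\varepsilon$ and on no auxiliary choices. (One may also note that here $\ell=2^p$, so even the dependence on $\ell$, which Theorem \ref{t_efgs} already avoids, would have been harmless.)
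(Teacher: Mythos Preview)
Your proposal is correct and is exactly the argument the paper intends: it states that the corollary follows immediately from Theorem \ref{t_efgs}, and you have simply spelled out the verification that the vertices $B_j$ of $\mathcal L_\varepsilon(V)$ satisfy $\Supp B_j=\sum_{i=1}^pS_i$, $\lfloor B_j\rfloor=0$, and $kB_j$ Cartier for $k$ the denominator of $\varepsilon$. The only trivial loose end is that you note the case $\varepsilon=1/2$ yields a single vertex but do not explicitly say the theorem still applies there (it does, with $k=2$); this is harmless.
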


\begin{example}
The following example shows that the requirement in Theorem \ref{t_efgs} that the divisors $B_i$ have the same support is necessary. Let $m$ and $d$ be positive integers with $m$ even. We claim that there exists a smooth surface $X$ and a smooth prime divisor $S$ on $X$ such that the ring
$$\mathfrak R=R\Big(X;mK_X,m\Big(K_X+\frac 1 2S\Big)\Big)$$ 
is not generated in degree $d$. Indeed, let $Y$ be a smooth surface with $K_Y$ ample, and let $f\colon \map X.Y.$ be the blowup of $Y$ at a point with  exceptional divisor $E\subseteq X$. Let $S\subseteq X$ be a smooth divisor such that $s=S\cdot E> 2d+2$. It is easy to see that $E\nsubseteq\Bs(K_X+tS)$ for any $\frac1s\leq t\leq\frac12$, and $E\subseteq\Bs(K_X+tS)$ for $0\leq t<\frac1s$. Then there is a generator of $\mathfrak R$ in the vector space $H^0(X,q(K_X+\frac1s S))$ for some sufficiently divisible positive integer $q$: otherwise, every element of $H^0(X,q(K_X+\frac1s S))$ would vanish at $E$. Now, if $\ell$ and $p$ are positive integers such that 
$$q\Big(K_X+\frac1s S\Big)=\ell mK_X+pm\Big(K_X+\frac12 S\Big),$$
by intersecting with $E$ we get $\ell=p(\frac{s}2-1)>d$, which proves the claim.
\end{example}

\begin{example} 
The following example shows that the requirement in Theorem \ref{t_efgs} that $\rfdown B_i.=0$ is also necessary. Let $m$ and $d$ be positive integers with $m$ even, and let $s\geq 4d+4$ be a positive integer. Let $X$ be the Hirzebruch surface $\mathbb F_s$, let $E$ be the smooth rational curve in $X$ with $E^2=-s$ and let $f\colon X\longrightarrow Y$ be the morphism which contracts $E$. Then $f^*K_Y=K_X+(1-\frac2s)E$. Let $A$ be a smooth ample divisor on $Y$ not passing through $f(E)$ such that $K_Y+\frac12 A$ is ample and $K_X+\frac12f^*A+tE$ is nef for $\frac12\leq t\leq 1-\frac2s$. The pair $(X,f^*A+E)$ is log smooth, and consider the ring 
$$\mathfrak R=R\Big(X;m\Big(K_X+\frac12 f^*A + \frac12 E\Big),m\Big(K_X+\frac12 f^*A+E\Big)\Big).$$
Then $E\nsubseteq\Bs(K_X+\frac12f^*A+tE)$ for $\frac12\leq t\leq 1-\frac2s$, and $E\subseteq\Bs(K_X+\frac12f^*A+tE)$ for $1-\frac2s<t\leq 1$. We conclude that $\mathfrak R$ is not generated in degree $d$ as in the previous example. 
\end{example}

\medskip 

By using the same methods as in the proof of Theorem \ref{t_efgs}, we can easily get the following:

\begin{theorem}\label{t_su}
Let $p$  be a positive integer and let $0<\varepsilon<1$ be a rational number. 

Then there exists a constant $N=N(p,\varepsilon)$ such that if $(X,\sum_{i=1}^p S_i)$ is a log smooth projective surface pair, then the number of log terminal models of $(X,B)$ with $B\in \mathcal E(V)\cap \mathcal L_\varepsilon (V)$, where $V=\sum_{i=1}^p\mathbb RS_i\subseteq \Div_{\mathbb R}(X)$, is bounded by $N$. 
\end{theorem}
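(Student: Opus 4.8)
The plan is to count log terminal models of $(X,B)$ for $B\in\mathcal E(V)\cap\mathcal L_\varepsilon(V)$ by a decomposition-and-bounding argument that mirrors the proof of Theorem~\ref{t_efgs}. First I would invoke Theorem~\ref{t_shokurov} to decompose $\mathcal E(V)=\bigcup_{i=1}^k\mathcal C_{f_i}(V)$ into finitely many rational polytopes, one for each birational contraction $f_i\colon\rmap X.Y_i.$, so that every log terminal model of $(X,B)$ for $B\in\mathcal E(V)$ is one of the $Y_i$. Thus it suffices to bound the number of those $\mathcal C_{f_i}(V)$ which meet $\mathcal L_\varepsilon(V)$ by a constant depending only on $p$ and $\varepsilon$. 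The issue is that Theorem~\ref{t_shokurov} produces finitely many chambers but with no control on $k$ in terms of $p$ and $\varepsilon$ alone, since the chambers depend on the particular surface $X$.

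The key to gaining uniform control is the same mechanism that appears in Theorem~\ref{t_efgs}: each codimension-one face $\mathcal F$ separating two chambers $\mathcal C_{f_i}(V)$ and $\mathcal C_{f_j}(V)$ (or lying on the boundary of $\mathcal E(V)$) is cut out, as in \eqref{e_Z}, by a hyperplane $\Pi=\{\sum\alpha_j d_j=\beta\}$ coming from a rational curve $\xi$ on some $Z_{k_0}$ contracted by a step of the relevant MMP, with $(K_{Z_{k_0}}+f_*\Delta)\cdot\xi=0$ on $\mathcal F$, $K_{Z_{k_0}}\cdot\xi\geq-2$, and $f_*S_i\cdot\xi\neq0$ for some $i$. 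Restricting attention to faces that meet $\mathcal L_\varepsilon(V)$, the adjunction-plus-negativity estimates \eqref{eq1}--\eqref{eq3} (now with $\gamma\leq1-\varepsilon$ in place of $\gamma\leq1-1/k$, since coefficients lie in $[\varepsilon,1-\varepsilon]$) bound all $\alpha_j$ from below and $|\beta|$ from above by a constant depending only on $\varepsilon$; combined with \cite[Proposition~2.14]{CZ12} and Lemma~\ref{l_fact} as before, there is an integer $r=r(p,\varepsilon)$ clearing denominators, so by Lemma~\ref{l_comb} each such face lies in one of finitely many hyperplanes, with the number of hyperplanes depending only on $p$ and $\varepsilon$.

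Next I would turn this into a bound on the number of chambers intersecting $\mathcal L_\varepsilon(V)$ using the combinatorial input of Lemma~\ref{lemma:polytopes} (or a direct argument): the chambers $\mathcal C_{f_i}(V)\cap\mathcal L_\varepsilon(V)$ that are $p$-dimensional form a polyhedral subdivision of $\mathcal L_\varepsilon(V)$ all of whose interior walls lie in one of $N_0=N_0(p,\varepsilon)$ fixed hyperplanes; a subdivision of a fixed polytope with at most $N_0$ wall-hyperplanes has at most $2^{N_0}$ (or more carefully $\sum_{i\leq p}\binom{N_0}{i}$) full-dimensional cells. Lower-dimensional chambers $\mathcal C_{f_i}(V)\cap\mathcal L_\varepsilon(V)$ contribute no new models: by Theorem~\ref{t_shokurov} they lie in the closure of the $p$-dimensional ones and their associated contraction agrees with that of an adjacent full-dimensional chamber, so the total count of distinct $Y_i$ arising is bounded by the number of full-dimensional cells, giving $N=N(p,\varepsilon)$.

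The main obstacle I anticipate is exactly the step of proving that every wall of the restricted chamber decomposition is genuinely cut out by a curve $\xi$ with the required numerical properties — i.e.\ transporting the claim proved inside Theorem~\ref{t_efgs} to the present setting cleanly, including the subtle case where a wall is interior to $\mathcal E(V)$ and one must compare two minimal models over a common ample model via \cite[Theorem~3.3]{HM10b}. A secondary nuisance is making sure that faces of $\mathcal L_\varepsilon(V)$ itself (the boundary $a_i=\varepsilon$ or $a_i=1-\varepsilon$) are handled, but these lie in $p$ fixed coordinate hyperplanes and cause no trouble. Once the uniform hyperplane bound is in place, the convex-geometry bookkeeping is routine.
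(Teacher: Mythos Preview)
Your proposal is correct and follows essentially the same route as the paper: both reduce to the wall/curve estimates established inside the proof of Theorem~\ref{t_efgs} (the claim around \eqref{e_Z} together with \cite[Proposition~2.14]{CZ12}, Lemma~\ref{l_fact}, and Lemma~\ref{l_comb}) to get uniform control on the Shokurov chamber decomposition restricted to $\mathcal L_\varepsilon(V)$. The only difference is bookkeeping at the very end: the paper applies Lemma~\ref{l_comb} to conclude directly that every extreme point of every $\mathcal C_i=\mathcal C_{f_i}(V)\cap\mathcal L_{\varepsilon/2}(V)$ lies in the finite set $(\tfrac{1}{d}\mathbb Z\cap[0,1])^p$ for some $d=d(p,\varepsilon)$, and then bounds the number of chambers by the number of polytopes with vertices in this lattice---so there is no need for your hyperplane-arrangement cell count or for the separate discussion of lower-dimensional chambers.
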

\begin{proof}
Let $B_1,\dots,B_{2^p}$ be the vertices of $\mathcal L_{\varepsilon/2}(V)$. If $k$ is the denominator of $\varepsilon/2$, then $kB_i$ is Cartier for all $i$. By Theorem \ref{t_shokurov}, for $i=1,\dots,q$ there exist contraction maps $f_i\colon \map X.Z_i.$ such that $\mathcal E(V)=\bigcup_{i=1}^q \mathcal C_{f_i}(V)$, and denote  
$$\mathcal C_{i}=\mathcal C_{f_i}(V)\cap \mathcal C.$$ 
Let $C_1,\dots,C_t$ be the extreme points of all such $\mathcal C_{i}$. Then by the proof of Theorem \ref{t_efgs}, there exists a constant $d=d(p,k)$ such that all divisors $dC_i$ are Cartier. There is an obvious isomorphism $V\simeq \mathbb R^p$ which sends $\mathcal L(V)$ to the unit hypercube $[0,1]^p$, and each $C_i$ corresponds to a point in $
(\frac1d\mathbb Z\cap[0,1])^p$. Hence, there are finitely many choices for these generators, and the theorem follows. 
\end{proof}

If we consider non-rational surfaces, the result can be strengthened as in Theorem \ref{t_surface} below. First we give a topological bound on the number of $(-1)$-curves on such surfaces.

\begin{lemma}\label{l_hg}
Let $X$ be a smooth projective surface which is not rational. 

Then there exists a constant $A=A(c_1(X)^2,c_2(X))$ such that the number of $(-1)$-curves on $X$ is bounded by $A$.  
\end{lemma}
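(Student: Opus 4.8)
The plan is to reduce the statement to a finiteness result for effective divisors of bounded self-intersection, using the fact that $X$ is not rational. First I would observe that since $X$ is not rational, its minimal model $X_{\min}$ is either a ruled surface over a curve of genus $\geq 1$, an abelian surface, a bielliptic surface, a K3 or Enriques surface, or a surface of general type, and in all cases $X$ is obtained from $X_{\min}$ by a composition of $k=\rho(X)-\rho(X_{\min})$ blow-ups; moreover $k$, and the invariants of $X_{\min}$, are bounded in terms of $c_1(X)^2$ and $c_2(X)$ (the latter is a topological invariant and $c_1(X)^2=K_X^2$ is determined by $c_2(X)$ and $\chi(\mathcal O_X)$, which is a topological invariant by Noether's formula). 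So it suffices to bound, for a surface $X$ with bounded Picard number over a \emph{minimal} surface $X_{\min}$ which is not $\mathbb P^2$ or a rational ruled surface, the number of $(-1)$-curves.

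The key point is that a $(-1)$-curve $C$ on $X$ satisfies $C^2=-1$, $K_X\cdot C=-1$, so $p_a(C)=0$ and $C$ is a smooth rational curve. Push $C$ forward to $X_{\min}$: its image is either a point or an irreducible curve $C'$ with $p_a(C')=0$, i.e. $C'$ is a rational curve on $X_{\min}$. Since $X_{\min}$ is not uniruled in the ruled/non-rational sense that there are no rational curves at all \emph{unless} $X_{\min}$ carries a $\mathbb P^1$-fibration $\pi\colon X_{\min}\to B$ with $B$ of genus $\geq 1$; in the latter case any rational curve on $X_{\min}$ is contained in a fibre of $\pi$, hence there are only finitely many such (the singular fibres) and each has finitely many components, all bounded by $c_2$ via the Euler-characteristic / Noether-type formula for $\pi$. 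In the remaining cases (abelian, bielliptic, K3, Enriques, general type) there are at most finitely many rational curves of negative self-intersection on $X_{\min}$, and in fact for our purposes we only need that the classes of curves $C$ on $X$ with $C^2=-1$ and $K_X\cdot C=-1$ form a bounded set in $N^1(X)$: this follows because such a class has bounded intersection with an ample divisor (bounded in terms of the geometry of $X_{\min}$ and the number of blow-ups), and effective classes of bounded degree against an ample divisor, with bounded self-intersection, are finite in number on any smooth projective surface by a standard argument (e.g. using the Hodge index theorem together with finiteness of the set of classes of bounded norm in the Néron--Severi lattice of bounded rank and bounded discriminant).

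Concretely, I would carry out the steps in this order: (1) record that $\chi(\mathcal O_X)$, $K_X^2$, $\rho(X)$ and $b_2(X)$ are all bounded in terms of $c_1(X)^2$ and $c_2(X)$; (2) invoke the Enriques--Kodaira classification to write $X\to X_{\min}$ as $k$ blow-ups with $k$ and the invariants of $X_{\min}$ bounded; (3) in the case $X_{\min}$ admits a $\mathbb P^1$-fibration over a curve of positive genus, bound the number of components of singular fibres by $c_2$ and conclude that $(-1)$-curves on $X$ lie, up to the exceptional divisors of $X\to X_{\min}$ and their strict transforms, among a bounded set; (4) in the remaining cases, bound the number of rational curves of negative self-intersection on $X_{\min}$ (finitely many, or none) and pull back; (5) in all cases, reduce to: the set of classes $\gamma\in N^1(X)$ with $\gamma^2=-1$, $K_X\cdot\gamma=-1$, and $\gamma$ effective, is finite, with cardinality bounded by the rank and discriminant of the Néron--Severi lattice, hence by $c_2$. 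The main obstacle I expect is step (4)–(5): making the count of negative rational curves on $X_{\min}$, and of the resulting classes on $X$, genuinely \emph{effective} in $c_1(X)^2$ and $c_2(X)$ rather than merely finite — this requires a uniform Hodge-index-type estimate on lattices of bounded rank and discriminant, which is elementary but must be stated carefully, and handling the general-type case of $X_{\min}$ where one uses that $(-1)$-curves on $X$ map to curves $C'$ with $K_{X_{\min}}\cdot C'\leq k-1$ and $C'^2\geq -k^2$ or so, again a bounded set.
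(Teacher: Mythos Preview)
Your overall framework (pass to the minimal model $X\to X_{\min}$, split into the ruled and non-uniruled cases, and bound the relevant quantities by $c_1(X)^2$ and $c_2(X)$) matches the paper's, and your step~(3) for the irrationally ruled case is essentially the paper's argument. The difference, and the gap, is in the non-uniruled case.

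You try to push a $(-1)$-curve $C$ down to $X_{\min}$ and then control rational curves on $X_{\min}$, invoking in step~(4) that there are ``finitely many rational curves of negative self-intersection'' on $X_{\min}$ and in step~(5) a Hodge-index/lattice bound via the rank and discriminant of $NS(X)$. Both of these are problematic: a K3 or Enriques surface can carry infinitely many $(-2)$-curves, so the claim in~(4) is false as stated; and the discriminant of $NS(X)$ is \emph{not} bounded by $c_1(X)^2$ and $c_2(X)$ (already for K3 surfaces with $\rho=1$ one has $NS\cong\langle 2d\rangle$ for arbitrary $d$), so the bound in~(5) is not effective in the required sense. These steps are also unnecessary. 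The paper's observation is that if $K_{X_{\min}}$ is nef, then writing $K_X=f^*K_{X_{\min}}+E$ with $E\ge 0$ exceptional, any $(-1)$-curve $C$ satisfies $E\cdot C=K_X\cdot C-f^*K_{X_{\min}}\cdot C\le -1<0$, hence $C$ is a component of $E$; so every $(-1)$-curve is $f$-exceptional and their number is at most $\rho(X/X_{\min})=K_{X_{\min}}^2-K_X^2=c_2(X)-c_2(X_{\min})\le c_2(X)$. This single line replaces your steps~(4)--(5) entirely and gives the effective bound directly.
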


\begin{proof}
We first assume that $X$ is not uniruled, and let  $f\colon \map X.Y.$ be the minimal model of $X$. Then, since $X$ and $Y$ are smooth, we have $K_X=f^*K_Y+E$, where $E\geq0$ contains the whole exceptional locus. If $C$ is a $(-1)$-curve on $X$, then $E\cdot C<0$ since $K_Y$ is nef, hence $C$ is a component of $E$. In particular, the number of $(-1)$-curves on $X$ is bounded by $\rho(X/Y)=c_1(Y)^2-c_1(X)^2$. From Noether's formula and since $Y$ is not uniruled, we have $c_1(Y)^2-c_1(X)^2=c_2(X)-c_2(Y)\leq c_2(X)$. Set $A_1=c_2(X)$.  

Assume now that $X$ is uniruled. Then, since $X$ is not rational, there exists a birational morphism $f\colon \map X.Y.$ to a relatively minimal ruled surface $Y\longrightarrow C$ over a projective curve of genus $g>0$. Note that 
$$g=h^1(X,\ring X.)=1-\frac {c_1(X)^2+c_2(X)}{12}$$ 
by Noether's formula, and hence
$$c_1(Y)^2=8(1-g)=\frac {2\big(c_1(X)^2+c_2(X)\big)}{3}.$$ 
By the uniqueness of minimal models on surfaces, the map $f$ factors through any Castelnuovo contractions of a $(-1)$-curve on $X$, hence all $(-1)$-curves on $X$ are contracted by $f$. If $h\colon \map X.C.$ is the induced map, then all the rational curves on $X$ are contained in the reducible fibres of $h$, since the fibres of the map $Y\longrightarrow C$ are smooth. Hence we need to bound
$$N=\sum_{c\in C}\#\big\{\text{irreducible components of }f^{-1}(c)\big\}.$$
We have
\begin{align*}
A_2:&=\sum_{c\in C}\Big(\#\big\{\text{irreducible components of }f^{-1}(c)\big\}-1\Big)
\\&=\rho(X/Y)=c_1(Y)^2-c_1(X)^2=\frac {2c_2(X)-c_1(X)^2}{3},
\end{align*}
and since $N\leq 2A_2$, we set $A=\max\{A_1,2A_2\}$. 
\end{proof}

\begin{theorem}\label{t_surface}
For any set of integers $p,k,C_1$ and $C_2$,  there exists a constant $M=M(C_1,C_2,p,k)$ such that if 
\begin{enumerate}
\item[(i)] $X$ is a smooth non-rational projective surface with $C_1=c_1(X)^2$ and $C_2=c_2(X)$,
\item[(ii)] $S_1,\dots,S_p$ are distinct prime divisors on $X$,
\end{enumerate}
then the number of log terminal models of $(X,B)$ with $B\in \mathcal E(V)$, where $V=\sum_{i=1}^p\mathbb RS_i\subseteq \Div_{\mathbb R}(X)$, is bounded by $M$. 
\end{theorem}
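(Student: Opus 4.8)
The plan is to use Shokurov's geography of models to reduce to a counting problem, and then to bound the relevant count by combining the factorisation of surface log terminal models with the topological bound on the number of $(-1)$-curves. Throughout we take $(X,\sum_{i=1}^p S_i)$ to be log smooth, as in the hypotheses of Theorems~\ref{t_efgs} and~\ref{t_su}, and we set $V=\sum_{i=1}^p\mathbb RS_i\subseteq\Div_{\mathbb R}(X)$. Since the Minimal Model Program holds in dimension $2$, Theorem~\ref{t_shokurov} provides birational contractions $f_1,\dots,f_k$ with targets $Y_1,\dots,Y_k$ such that $\mathcal E(V)=\bigcup_{j=1}^k\mathcal C_{f_j}(V)$; every log terminal model arising for a pair $(X,B)$ with $B\in\mathcal E(V)$ is one of the $Y_j$, so it is enough to bound $k$ by a constant depending only on $C_1$, $C_2$ and $p$.

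Fix $j$ and choose $B$ in the relative interior of $\mathcal C_{f_j}(V)$ inside $\mathcal E(V)$. Then $(X,B)$ is log smooth and log canonical and $K_X+B$ is pseudoeffective, so by Lemma~\ref{l_fact} we may write $f_j=h_j\circ g_j$, where $g_j\colon X\to Z_j$ is a composite of contractions of $(-1)$-curves and $h_j\colon Z_j\to Y_j$ contracts only curves contained in $\Supp (g_j)_*B\subseteq\bigcup_{i=1}^p\Supp (g_j)_*S_i$. As $f_j$ is recovered from the pair $(g_j,h_j)$, it remains to bound the number of possibilities for $g_j$, and, for each fixed $g_j$, the number of possibilities for $h_j$.

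To bound the $g_j$, note that since $X$ is not rational there is a birational morphism $\mu\colon X\to X_0$ onto a surface $X_0$ which is the minimal model of $X$ when $X$ is not uniruled, and a relatively minimal ruled surface when $X$ is uniruled but not rational; as recalled in the proof of Lemma~\ref{l_hg}, $\mu$ factors through the contraction of any $(-1)$-curve on $X$. Applying this to the successive blow-downs making up $g_j$, one sees that $\mu$ factors as $X\xrightarrow{g_j}Z_j\to X_0$, so every prime divisor contracted by $g_j$ is among the $\rho(X/X_0)$ prime divisors contracted by $\mu$; the proof of Lemma~\ref{l_hg} bounds $\rho(X/X_0)$ by a constant $A=A(C_1,C_2)$. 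Since a birational morphism out of the smooth surface $X$ is determined by its set of contracted prime divisors, there are at most $2^A$ possibilities for $g_j$. For $g_j$ fixed, the exceptional locus of $h_j$ is a subset of the at most $p$ curves $(g_j)_*S_1,\dots,(g_j)_*S_p$ on $Z_j$, and $h_j$ is determined by this subset, so there are at most $2^p$ possibilities for $h_j$. Hence $k\le 2^{A+p}$, and one may take $M=2^{A(C_1,C_2)+p}$.

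The step I expect to need the most care is the assertion that $g_j$ factors through the fixed morphism $\mu\colon X\to X_0$, equivalently that every $(-1)$-curve contracted along $g_j$ is $\mu$-exceptional. When $X$ is not uniruled this is immediate: if such a curve $E$ were not $\mu$-exceptional, then pairing $K_X=\mu^*K_{X_0}+F$ (with $F\ge0$ and $\mu$-exceptional) against $E$ would give $K_{X_0}\cdot\mu_*E<0$, contradicting the nefness of $K_{X_0}$. When $X$ is uniruled but not rational it rests on the uniqueness of the relatively minimal model reached from $X$ by contracting $(-1)$-curves, which is exactly the input used in the proof of Lemma~\ref{l_hg} and which I would quote from there. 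The only other point deserving explicit mention is that, by Theorem~\ref{t_shokurov}, every log terminal model occurring as $B$ ranges over $\mathcal E(V)$ is one of the finitely many $Y_j$, so that bounding the number $k$ of chambers indeed suffices.
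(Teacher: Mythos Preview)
Your argument is correct and follows the same route as the paper: use Lemma~\ref{l_fact} to factor each log terminal model as $h_j\circ g_j$, bound the exceptional curves of $g_j$ via Lemma~\ref{l_hg}, and bound the exceptional curves of $h_j$ by $p$, obtaining $M=2^{A+p}$ with $A=A(C_1,C_2)$. The paper's proof is terser and simply says that the number of $(-1)$-curves on $X$ is bounded by $M'$ and sets $M=2^{p+M'}$; your extra care in showing that $\mu$ factors through each $g_j$, so that the prime divisors contracted by $g_j$ lie among the $\rho(X/X_0)\le A$ prime divisors contracted by $\mu$, is a genuine clarification of a step the paper leaves implicit (since intermediate contractions in $g_j$ need not be of $(-1)$-curves on $X$ itself).
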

\begin{proof}
If $f\colon \map X.Y.$ is the log terminal model of $(X,B)$ for some $B\in \mathcal E(V)$, then it is uniquely determined by its exceptional divisors. By Lemma \ref{l_hg}, there exists a constant $M'=M'(C_1,C_2)$ such that the number of $(-1)$-curves on $X$ is bounded by $M'$. Hence, setting $M=2^{p+M'}$, the result follows by Lemma \ref{l_fact}.
\end{proof}
\section{Minimal models of threefolds}\label{sec:threefolds}

\begin{lemma}\label{l_corner}
Let $(X,S=\sum_{i=1}^p S_i)$ be a  log smooth projective threefold, where $S_1,\dots,S_p$ are distinct prime divisors, and assume that $0<\varepsilon\leq1/2$ is a rational number such that $(X,\varepsilon S)$ is terminal and $K_X+\varepsilon S$ is big.  Assume that $S_i\nsubseteq\Bs_+(K_X+\varepsilon S)$ for every $i$. Let $I$ be the total number of irreducible components of intersections of each two of the divisors $S_1,\dots,S_p$. 

Then for any $i$, the number of curves contained in 
$$\Bs_+(K_X+\varepsilon S)\cap S_i$$
is bounded  by a constant which depends on $\rho(X)$, $\rho(S_i)$, $\varepsilon$ and $I$. 
\end{lemma}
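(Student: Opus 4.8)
The plan is to identify $\Bs_+(K_X+\varepsilon S)$ with the exceptional locus of the ample model of $D:=K_X+\varepsilon S$ (Lemma \ref{l_bplus}), to restrict this model to $S_i$, and to bound the curves it contracts there by running a minimal model program on the surface $S_i$, while controlling, via Lemmas \ref{lem:discrepbound} and \ref{l_echo}, the extra curves that three-dimensional flips create inside $S_i$.

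Since $\varepsilon$ is rational, $D$ is a big $\mathbb Q$-divisor, so the three-dimensional MMP gives an ample model $f\colon X\dashrightarrow Y$ and, by Lemma \ref{l_bplus}, $\Bs_+(D)=\Exc f$. The hypothesis $S_j\nsubseteq\Bs_+(D)$ means $f$ contracts no $S_j$, and in particular not $S_i$. After possibly replacing $X$ by a smaller open subset I may assume that the only divisors present are $S_i$ and the at most $I$ divisors meeting it, so that every constant produced below depends only on $\rho(X)$, $\rho(S_i)$, $\varepsilon$ and $I$. Write $f=\psi\circ\varphi$, where $\varphi\colon X\dashrightarrow X'$ is a $D$-MMP (so $K_{X'}+\varepsilon S'$ is nef and big, hence semiample) and $\psi\colon X'\to Y$ is the associated birational morphism; by Lemma \ref{l_canonical}, $X'$ is $\mathbb Q$-factorial and $(X',\varepsilon S')$ is terminal. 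Set $S_i'=\varphi_*S_i$. Each curve of $\Bs_+(D)\cap S_i$ then falls into one of three classes: (a) curves contracted by $\psi$ (seen on $X'$); (b) strict transforms of curves originally on $S_i$ contracted by $\varphi$; (c) curves that appear on a strict transform of $S_i$ only after some flip of $\varphi$.

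Class (a) is straightforward: since $\psi$ does not contract $S_i'$, the induced morphism $\psi|_{S_i'}\colon S_i'\to\psi(S_i')$ is birational, so the curves it contracts form a negative-definite configuration on $S_i'$, are linearly independent in $N^1(S_i')$, and hence number at most $\rho(S_i')$; and $\rho(S_i')$ is bounded in terms of $\rho(S_i)$ together with the number of class (c) curves, which are precisely the curves by which $S_i'$ can differ from a birational contraction of $S_i$. For class (b) I would use a restriction argument in the spirit of Lemma \ref{lem:BCHM}: by adjunction $D|_{S_i}=K_{S_i}+\Theta_i$ with $\Theta_i\sim_{\mathbb Q}\varepsilon\sum_{j\neq i}(S_i\cap S_j)-(1-\varepsilon)S_i|_{S_i}$, and $K_{S_i}+\Theta_i$ is big because $S_i\nsubseteq\Bs_+(D)$, so a run of the MMP for $K_{S_i}+\Theta_i$ contracts these curves and has at most $\rho(S_i)$ steps.

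Class (c) is the delicate one, and the main obstacle, because the MMP $\varphi$ may contain arbitrarily many flips, so one cannot bound the number of its steps — only its effect on $S_i$. For a curve $C$ of class (c), blowing up $C$ at the stage where it first appears produces a geometric valuation $E$ over $X$ whose centre on $X$ lies on the union $Z$ of the at most $I$ curves $S_i\cap S_j$ and the boundedly many curves of class (b), and which satisfies $a(E,X,\varepsilon S)<1$: the discrepancy bound follows from $\varepsilon\le1/2$ and the terminality of $(X,\varepsilon S)$ together with Lemma \ref{lem:discrepbound} applied to $\varphi$, which forces the relevant integral discrepancies over the terminal threefold $X'$ to be at most $\rho(X)$. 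Since $Z$ is a union of boundedly many curves, Lemma \ref{l_echo} bounds the number of such valuations $E$, hence the number of curves in class (c). Summing the three contributions gives the required constant; the whole point is that, rather than bounding the three-dimensional MMP itself, one bounds its interaction with $S_i$ by balancing the surface minimal model program (classes (a), (b)) against Lemmas \ref{lem:discrepbound} and \ref{l_echo} (class (c)).
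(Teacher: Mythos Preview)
There are two genuine gaps. The first is your treatment of class (b). Lemma \ref{lem:BCHM} requires $S_i$ to appear with coefficient $1$ in the boundary, whereas here its coefficient is $\varepsilon\le\tfrac12$; the ``boundary'' $\Theta_i$ you write down contains the non-effective term $-(1-\varepsilon)S_i|_{S_i}$, so there is no surface log MMP for $(S_i,\Theta_i)$ and no reason the curves of $\Exc(\varphi)\cap S_i$ should be exactly those such an MMP would contract. (Your classes (a) and (b) also omit curves $C\subseteq\Exc(f)\cap S_i$ that are not themselves contracted but merely lie in an $f$-exceptional divisor $E$ with $f(C)=f(E)$ a curve; the paper treats these separately, bounding them by $\rho(X)^2/\varepsilon$ via Lemma \ref{lem:discrepbound}.) In the paper class (b) is handled not by restriction but by telescoping Picard numbers of the normalisations $\overline S_i^j$ along the MMP: the total number of curves contracted on $S_i$ is at most $\rho(S_i)+\sum_j N_j$, where $N_j$ is the number of flipped curves landing in $S_i^{j+1}$.

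The second and more fundamental gap is that once you are forced into this telescoping bound, you must control $\sum_j N_j$, and this is \emph{not} the same as bounding the set of valuations $E_\Gamma$: the same valuation can be realised as the blow-up of a flipped curve at many different steps of the MMP, so your appeal to Lemma \ref{l_echo} bounds the wrong quantity. The decisive observation you are missing is that for each such valuation $E$, whenever its centre on $X^{j+1}$ is a flipped curve one has $0\le a(E,X^{j+1},\varepsilon S^{j+1})=1-\varepsilon\,\mult_E S^{j+1}$, so the positive integers $\mult_E S^{j+1}$ at those steps are bounded by $1/\varepsilon$ and strictly decrease (because discrepancies strictly increase across each such flip). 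Hence every valuation contributes at most $1/\varepsilon$ times to $\sum_j N_j$, giving $\sum_j N_j\le(\rho(X)+I)/\varepsilon$. Without this decreasing-multiplicity argument the proof does not close.
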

\begin{proof}
Fix an index $i$. Then there exists a sequence of $(K_X+\varepsilon S)$-flips and divisorial contractions
\begin{equation}\label{eq:sequence}
f\colon X=\rmap X^0.\rmap .\dots.X^k.\longrightarrow X^{k+1}
\end{equation}
such that $X^k$ is a log terminal model of $(X,\varepsilon S)$ and $X^{k+1}$ is the ample model $(X,\varepsilon S)$. Since $S_i\nsubseteq\Bs_+(K_X+\varepsilon S)$, the divisor $S_i$ is not contracted by this MMP by Lemma \ref{l_bplus}. Let $S^j_\ell$ and $\overline{S}_\ell^j$ denote the proper transform of $S_\ell$ in $X^j$ and its normalisation for every $\ell=1,\dots,p$, and set $S^j=\sum_{\ell=1}^p S_\ell^j$. Thus, there are induced sequences 
$$g\colon S_i=\rmap S_i^0.\rmap S_i^1..\rmap \dots.. S_i^k\dashrightarrow S_i^{k+1}.$$ 
and
$$\overline g\colon S_i=\rmap \overline S_i^0.\rmap \overline S_i^1..\rmap \dots.. \overline S_i^k\dashrightarrow \overline{S}_i^{k+1}..$$ 
By Lemma \ref{l_bplus}, if $C$ is a curve contained in $\Bs_+(K_X+\varepsilon S)\cap S_i$, then $C\subseteq\Exc(f)$. 

We first assume that $g$ is an isomorphism at the generic point of $C$. Then there exists an $f$-exceptional prime divisor $E\subseteq X$ containing $C$ such that $f(C)=f(E)\subseteq X^{k+1}$; otherwise, the exceptional set of $f$ would be $1$-dimensional in a neighbourhood of $C$, hence $g$ would not be an isomorphism at the generic point of $C$. By Lemma \ref{l_canonical} and \ref{lem:discrepbound}, we have
$$0 \leq a(E,X^{k+1},\varepsilon S^{k+1}) \leq\rho(X)- \varepsilon\mult_E S^{k+1} \le \rho(X)- \varepsilon\mult_{f(E)} S^{k+1},$$
and in particular
$$\mult_{f(E)} S_i^{k+1} < \rho(X)/\varepsilon.$$
Therefore, for each $f$-exceptional divisor $E$,  there are at most $\rho(X)/\varepsilon$ curves in $E\cap S_i$ which map to $f(E)$. Since there are at most $\rho(X/X^{k+1})$ such divisors $E$, the number of curves $C\subseteq\Bs_+(K_X+\varepsilon S)\cap S_i$ which are not contracted by $g$ is at most $\rho(X)^2/\varepsilon$. 

It remains to count the curves $C\subseteq \Bs_+(K_X+\varepsilon S)\cap S_i$ such that $g$ is not an isomorphism at the generic point of $C$, and it suffices to count the curves contracted by each of the maps $g_j\colon\rmap S_i^j.S_i^{j+1}.$. Let $\overline g_j\colon\rmap \overline S_i^j.\overline S_i^{j+1}.$ be the induced maps of normalisations, and let $N_j$ is the number of curves \emph{extracted} by $\overline g_j$. First note that for each curve contracted by $g_j$ there exists at least one curve contracted by $\overline g_j$. Thus, there are at most $\rho(\overline S_i^j)-\rho(\overline S_i^{j+1})+N_j$ curves contracted by $g_j$, and we need to bound the number $\rho(S_i)+\sum_{j=0}^k N_j$. 

If $N_j\neq 0$, then $\rmap X^j. X^{j+1}.$ must be a flip (hence necessarily $j<k$), and furthermore, $N_j$ is the number of flipped curves contained in $S_i^{j+1}$. For each such a curve $\Gamma$, let $E_\Gamma$ be the exceptional divisor obtained by blowing up $\Gamma$ which dominates $\Gamma$. Then, by Lemma \ref{l_canonical}, $X^{j+1}$ is terminal and therefore it is smooth at the generic point of $\Gamma$ by \cite[Corollary 5.39]{KM98}. Thus,
\begin{equation}\label{eq:discrep}
0\leq a(E_\Gamma,X,\varepsilon S)<a(E_\Gamma,X^{j+1},\varepsilon S^{j+1})=1-\varepsilon\mult_{\Gamma} S^{j+1}\leq 1-\varepsilon,
\end{equation}
where the last inequality follows from $\mult_\Gamma S_i^{j+1}\geq1$.

Let $\mathcal V$ be the set of all valuations which are either $f$-exceptional prime divisors on $X$, or obtained as the exceptional divisor on the blow-up of a curve in $S_\ell\cap S_i$ for each $\ell\neq i$; then it is clear that $\#\mathcal V\leq\rho(X)+I$. Viewing each $E_\Gamma$ as a valuation, we first claim that $E_\Gamma\in\mathcal V$ for all $\Gamma$. Indeed, assume that the centre of $E_\Gamma$ on $X$ is a point $x\in X$. If $E_\Gamma$ is obtained by blowing up $x$, then as $(X,S)$ is log smooth, we have
$$a(E_\Gamma,X,\varepsilon S)=2-\varepsilon\mult_x S\geq 2-3\varepsilon\geq1-\varepsilon,$$
which is a contradiction with \eqref{eq:discrep}. The case when $E_\Gamma$ is obtained by blowing up a point on a birational model of $X$ also follows since the discrepancies increase by blowing up, as $(X,\varepsilon S)$ is terminal. Therefore, the centre of $E_\Gamma$ on $X$ is either a divisor or a curve, and then the rest of the claim follows by analogous computations. In particular, we have $N_j\leq\#\mathcal V\leq\rho(X)+I$ for each $j$. 

Next we want to estimate how many times it happens that $N_j\neq0$. In other words, we want to find an upper bound on the number of varieties $X^{j+1}$ on which a valuation in $\mathcal V$ is realised as the exceptional divisor of a blow-up of a flipped curve on $X^{j+1}$. Fix $E\in\mathcal V$, and consider the number 
$$M_E^{j+1}=\mult_E S^{j+1}\in\mathbb N.$$
If $E$ is realised as the exceptional divisor on the blow-up of a flipped curve on $X^{j+1}$, then 
$$0\leq a(E,X^{j+1},\varepsilon S^{j+1})=1-\varepsilon M_E^{j+1},$$
and hence $M_E^{j+1}\leq 1/\varepsilon$ for all $j$. Since at each step of \eqref{eq:sequence} the discrepancies are increasing, the sequence $M_E^{j+1}$ is decreasing. Therefore, each $E\in\mathcal V$ is realised as an exceptional divisor on the blow-up of a flipped curve at most $1/\varepsilon$ times, hence 
$$\sum_{j=0}^kN_j\leq\frac{\rho(X)+I}{\varepsilon}.$$
Putting all this together, we get that the number of curves contained in $\Bs_+(K_X+\varepsilon S)\cap S_i$ is at most
$$\rho(S_i)+\frac {\rho(X)^2+\rho(X)+I} {\varepsilon},$$
which proves the lemma.
\end{proof}

\begin{lemma} \label{l_valuation} 
Let $(X,S=\sum_{i=1}^p S_i)$ be a log smooth projective threefold, where $S_1,\dots S_p$ are distinct prime divisors, and let $V=\sum_{i=1}^p \mathbb R_+ S_i\subseteq \Div_{\mathbb R}(X)$. Assume that $S_j\nsubseteq\Bs_+(K_X+B)$ for all $B\in \mathcal L (V)$ such that $K_X+B$ is big and for all $j$. Let $I$ be the total number of irreducible components of intersections of each two of the divisors $S_1,\dots,S_p$. 

Then for any $j$, and for every rational number $\varepsilon>0$ such that $(X,\varepsilon S)$ is terminal and $K_X+\varepsilon S$ is big, the number of curves contained in 
$$\bigcup_{B\in \mathcal L_\varepsilon (V)}\Bs_+(K_X+B)\cap S_j$$ 
is bounded  by a constant which depends on $\rho(X)$, $\rho(S_j)$, $p$, $\varepsilon$ and $I$. 
\end{lemma}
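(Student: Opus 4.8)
The plan is to reduce the statement to Lemma~\ref{l_corner} by covering the polytope $\mathcal L_\varepsilon(V)$ with finitely many subpolytopes on each of which the augmented base locus along $S_j$ can be controlled uniformly. First I would fix $j$ and observe that the set we must bound,
$$Z_j=\bigcup_{B\in \mathcal L_\varepsilon(V)}\Bs_+(K_X+B)\cap S_j,$$
only involves $B$ with $K_X+B$ big (otherwise $\Bs_+(K_X+B)=X$ is excluded by hypothesis, since $S_j\nsubseteq\Bs_+(K_X+B)$ forces bigness in the relevant range), so we are genuinely looking at a family of augmented base loci of big divisors. By Lemma~\ref{l_bplus}, for each such $B$ the set $\Bs_+(K_X+B)$ is the exceptional locus of the ample model of $K_X+B$, and by Theorem~\ref{t_shokurov} there are only finitely many ample models $f_i\colon\rmap X.Z_i.$ arising as $B$ ranges over $\mathcal E(V)$, with associated rational polytopes $\mathcal C_{f_i}(V)$ covering $\mathcal E(V)$. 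On the interior of each chamber $\mathcal C_{f_i}(V)$ the ample model is constant, hence so is $\Bs_+(K_X+B)$; the possible jumps happen only on the (finitely many) faces. So $Z_j$ is a finite union of sets $\Exc(f_i)\cap S_j$ together with analogous loci for the boundary contractions, and it suffices to bound each $\Exc(f_i)\cap S_j$ by a constant depending only on the stated data.

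Next I would bound a single $\Exc(f_i)\cap S_j$ using Lemma~\ref{l_corner}. Pick a rational point $B_i$ in the interior of the terminal chamber $\mathcal C_{f_i}(V)\cap\mathcal L_\varepsilon(V)$ (if the chamber meets $\mathcal L_\varepsilon(V)$ at all); then $(X,B_i)$ is terminal, $K_X+B_i$ is big, and $\Bs_+(K_X+B_i)=\Exc(f_i)$. Writing $B_i=\sum \delta_\ell S_\ell$ with each $\delta_\ell\in[\varepsilon,1-\varepsilon]$, Lemma~\ref{l_corner} applies with the coefficient $\varepsilon$ replaced by $\min_\ell\delta_\ell\geq\varepsilon$ after scaling — more precisely, one applies Lemma~\ref{l_corner} to the pair $(X,S)$ with the rational number $\varepsilon$ itself, noting that $\Bs_+(K_X+B_i)\cap S_j\subseteq\Bs_+(K_X+\varepsilon S + (B_i-\varepsilon S))\cap S_j$, and $B_i-\varepsilon S\geq0$, so any curve in $\Bs_+(K_X+B_i)$ lies in $\Bs_+(K_X+\varepsilon S)$ by monotonicity of the augmented base locus under adding effective divisors. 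Hence the number of curves in $\Exc(f_i)\cap S_j$ is at most $\rho(S_j)+\frac{\rho(X)^2+\rho(X)+I}{\varepsilon}$, a bound independent of $i$. Multiplying by the number $k$ of chambers — which by Lemma~\ref{lemma:polytopes} (or a direct argument) is itself bounded in terms of $p$, $\varepsilon$ and the combinatorial data, using that the chambers refine $\mathcal L_\varepsilon^{\can}(V)$ — gives the desired bound depending only on $\rho(X)$, $\rho(S_j)$, $p$, $\varepsilon$ and $I$.

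The main obstacle I anticipate is the step controlling the number of relevant chambers, i.e.\ producing a bound on $k$ that depends only on $p$, $\varepsilon$ and $I$ rather than on $X$: a priori Theorem~\ref{t_shokurov} gives finiteness but not uniformity. Overcoming this should require exactly the input of Lemma~\ref{lemma:polytopes} together with the fact, to be extracted from the flip-termination analysis in the proof of Lemma~\ref{l_corner}, that each wall between adjacent chambers corresponds to a divisorial contraction or flip whose exceptional locus meets $S_j$ in a controlled way; the number of such walls intersecting $S_j$ is then governed by the same valuation-counting as in Lemma~\ref{l_corner}, while walls not meeting $S_j$ do not contribute new curves to $Z_j$. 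A secondary subtlety is handling chambers of $\mathcal E(V)$ that are not terminal chambers (where $(X,B)$ may fail to be terminal): there one passes to a nearby terminal point or shrinks $\varepsilon$ slightly, using that $\mathcal L_\varepsilon^{\can}(V)$ has nonempty interior and the bound is monotone in $\varepsilon$.
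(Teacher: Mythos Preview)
Your proposal has two genuine gaps.

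First, the claimed ``monotonicity of the augmented base locus under adding effective divisors'' is false: for $E\geq0$ there is in general no containment $\Bs_+(D+E)\subseteq\Bs_+(D)$ (take $D$ ample and $E$ a large multiple of a non-nef effective divisor). What is true is the subadditivity $\Bs_+(D+E)\subseteq\Bs_+(D)\cup\Supp E$, but this does not rescue your argument: for a generic interior point $B_i$ of a chamber one has $\mult_{S_j}B_i>\varepsilon$, so $S_j\subseteq\Supp(B_i-\varepsilon S)$ and the inclusion $\Bs_+(K_X+B_i)\cap S_j\subseteq\big(\Bs_+(K_X+\varepsilon S)\cup\Supp(B_i-\varepsilon S)\big)\cap S_j$ becomes vacuous. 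This is exactly why the paper does \emph{not} work with interior points of chambers but instead passes to the $2^p$ vertices of the larger cube $\mathcal L'(V)=\{\sum a_iS_i\mid a_i\in\{\varepsilon,1\}\}$, using that $\Bs_+(\sum t_kD_k)\subseteq\bigcup_k\Bs_+(D_k)$ for convex combinations. At a vertex $B$ one has $\mult_{S_j}B\in\{\varepsilon,1\}$; the case $\mult_{S_j}B=\varepsilon$ is handled by the subadditivity above (now $S_j\nsubseteq\Supp(B-\varepsilon S)$, and the extra curves lie in $\bigcup_{k\neq j}S_k\cap S_j$, bounded by $I$), while the case $\mult_{S_j}B=1$ requires an entirely different argument, restricting the MMP to $S_j$ via Lemmas~\ref{lem:BCHM} and~\ref{lem:canonical}.

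Second, your plan to bound the number $k$ of chambers is circular. Lemma~\ref{lemma:polytopes} does not bound $k$ outright; it requires as input that each adjacent-connected piece contains at most $m$ chambers, and establishing such an $m$ is precisely the content of Theorem~\ref{l_disconnected}, which \emph{uses} the present lemma as its main ingredient. The paper's proof of Lemma~\ref{l_valuation} therefore makes no reference to chambers at all: the reduction to $2^p$ vertices sidesteps the issue entirely, and the resulting bound depends only on $\rho(X)$, $\rho(S_j)$, $p$, $\varepsilon$ and $I$ without ever counting models.
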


\begin{proof} 
By Lemma \ref{l_corner} there exists a constant $M=M(\varepsilon, I,\rho(X),\rho(S_j))$ such that the number of curves in $\Bs_+(K_X+\varepsilon S)\cap S_j$ is bounded by $M$.

Without loss of generality, we may assume that $\varepsilon<1/2$. Let
$$\mathcal L'(V)=\{ B=\sum a_i S_i\mid a_i\in [\varepsilon, 1]\},$$
and let $B_1,\dots, B_{2^p}$ be the extreme points of $\mathcal L'(V)$. Since $\mathcal L_\varepsilon (V)\subseteq \mathcal L'(V)$, it follows that  
$$\bigcup_{B\in \mathcal L_\varepsilon (V)}\Bs_+(K_X+B)\subseteq \bigcup_{i=1}^{2^p} \Bs_+(K_X+B_i).$$
Hence, it is enough to bound the number of curves in $\Bs_+(K_X+B_i)\cap S_j$ for every $i=1,\dots,2^p$. 
Fix $i$, and note that $\mult_{S_j}B_i\in\{\varepsilon,1\}$. We distinguish two cases. 
 
If $\mult_{S_j}B_i=1$, set $T=\varepsilon \sum_{k\neq j} S_k+S_j$. Then 
$(K_X+T)|_{S_j}$ is terminal, and let $f\colon \rmap X.X'.$ be the ample model of $K_X+T$. By assumption and by Lemma \ref{l_bplus}, $f$ does not contract $S_j$ and by Lemmas \ref{lem:BCHM} and \ref{lem:canonical}, the MMP for $(X,T)$ induces an MMP for some terminal pair $(S_j,\Theta)$. In particular, since $S_j$ is a surface, this induced MMP contracts at most $\rho(S_j)$ curves. Further, if a curve $C\subseteq \Bs_+(K_X+T)\cap S_j$ is not contracted by the MMP for $(S,\Theta)$, then similarly as in Lemma \ref{l_corner}, the number of such curves $C$ is bounded by $\frac1\varepsilon\rho(X)(\rho(X)+1)$. Thus, the number of curves inside $\Bs_+(K_X+T)\cap S_j$ is at most $\rho(S_j)+\frac1\varepsilon\rho(X)(\rho(X)+1)$. We have
\begin{align*}
\Bs_+(K_X+B_i)\cap S_j& \subseteq (\Bs_+(K_X+T)\cup \Supp (B_i-T))\cap S_j,\\
& \subseteq \Big(\Bs_+(K_X+T)\cup\bigcup\nolimits_{k\neq j}S_k\Big)\cap S_j,
\end{align*}
and hence the number of curves inside $\Bs_+(K_X+B_i)\cap S_j$ is at most $\rho(S_j)+\frac1\varepsilon\rho(X)(\rho(X)+1)+I$. 

Finally, if $\mult_{S_j} B_i=\varepsilon$, then, since $B_i\ge \varepsilon S$, we have 
\begin{align*}
\Bs_+(K_X+B_i)\cap S_j & \subseteq \big(\Bs_+(K_X+\varepsilon S)\cup \Bs_+(B_i-\varepsilon S)\big)\cap S_j\\
& \subseteq \Big(\Bs_+(K_X+\varepsilon S)\cup\bigcup\nolimits_{k\neq j}S_k\Big)\cap S_j.
\end{align*}
Thus, the number of curves in $\Bs_+(K_X+B_i)\cap  S_j$ is bounded by $M+I$ and the result follows.
\end{proof}

\begin{lemma}\label{lemma:polytopesBase}
Let $(X,\sum_{i=1}^p S_i)$ be a $3$-dimensional log smooth pair such that $K_X$ is pseudoeffective,  $S_1,\dots,S_p$ are distinct prime divisor, and let $V=\sum_{i=1}^p \mathbb R_+ S_i\subseteq \Div_{\mathbb R}(X)$. Assume that $S_i\nsubseteq\Bs(K_X+B)$ for all $B\in \mathcal L_\varepsilon (V)$ and every $i=1,\dots,p$. Let $F_1,\dots,F_\ell$ be all the prime divisors contained in $\Bs(K_X)$, and for every $\nu\subseteq\{1,\dots,\ell\}$, define 
$$\mathcal B_\nu=\{B\in\mathcal L_\varepsilon^{\can}(V)\mid F_i\subseteq\Bs(K_X+B)\text{ if and only if }i\in\nu\}.$$
Let $\mathcal C_i$ be the terminal chambers in $V$ (cf.\ Definition \ref{def:chambers}), for $1\leq i\leq k$. Assume that each adjacent-connected component of every $\mathcal B_\nu$ with respect to the covering by $\mathcal C_i$ is the union of at most $m$ polytopes $\mathcal C_i$. 

Then there exists a constant $M=M(\ell,m)$ such that $k\le M$. 
\end{lemma}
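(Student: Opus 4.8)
The plan is to deduce the statement from the combinatorial Lemma \ref{lemma:polytopes}. \emph{Setting up the data.} Since the MMP holds in dimension $3$ and $K_X$ is pseudoeffective, $K_X+B$ is effective for every $B\in\mathcal L(V)$, so $\mathcal L_\varepsilon^{\can}(V)\subseteq\mathcal E(V)$. By Theorem \ref{t_shokurov} the polytopes $\mathcal C_{f_j}(V)$ cover $\mathcal E(V)$ and have pairwise disjoint interiors, so the terminal chambers $\mathcal C_1,\dots,\mathcal C_k$ cover $\mathcal L_\varepsilon^{\can}(V)$ up to a set of smaller dimension. Because discrepancies of a log smooth pair increase as the coefficients decrease, $\mathcal L_\varepsilon^{\can}(V)$ is downward closed inside the cube $\mathcal L_\varepsilon(V)=[\varepsilon,1-\varepsilon]^p$, so (assuming it is nonempty, otherwise $k=0$) the point $(\varepsilon,\dots,\varepsilon)$ is a componentwise minimum of it; after an affine translation we view $\mathcal L_\varepsilon^{\can}(V)$ as a polytope $\mathcal Q\subseteq[0,1]^p$ containing the origin and covered by $\mathcal C_1,\dots,\mathcal C_k$. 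This supplies the data ``$\mathcal Q$'' and ``$\mathcal C_1,\dots,\mathcal C_\ell$'' of Lemma \ref{lemma:polytopes}, with its ``$\ell$'' equal to our $k$.

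\emph{The coarse polytopes.} Discard every $F_i$ equal to some $S_j$ (by hypothesis such an $F_i$ never lies in $\Bs(K_X+B)$ for $B\in\mathcal L_\varepsilon(V)$, so it is irrelevant), and for each remaining $i$ set $\mathcal P_i=\overline{\{B\in\mathcal Q:F_i\not\subseteq\Bs(K_X+B)\}}$, discarding also those $\mathcal P_i$ which are not $p$-dimensional or which equal $\mathcal Q$; each deletion only decreases the number of the $F_i$. I claim $\mathcal P_i$ is an upward-closed rational polytope: upward-closedness follows from $\Bs(K_X+B')\subseteq\Bs(K_X+B)\cup\Supp(B'-B)$ together with $F_i\neq S_j$; convexity follows since a convex combination of effective representatives of $K_X+B_1$ and $K_X+B_2$ avoiding $F_i$ is an effective representative of $K_X+tB_1+(1-t)B_2$ avoiding $F_i$; and that it is a rational polytope follows from Theorem \ref{t_shokurov}, because $\{B:F_i\not\subseteq\Bs(K_X+B)\}$ is, up to a lower-dimensional set, the union of those chambers $\mathcal C_{f_j}(V)$ on which $F_i$ is not contracted — here we use that for $B$ in the interior of a terminal chamber $\mathcal C_j$ the divisorial part of $\Bs(K_X+B)$ consists exactly of the divisors contracted by $f_j$, all of which lie among $F_1,\dots,F_\ell$. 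Consequently the cell of the arrangement $\mathcal P_1,\dots,\mathcal P_\ell$ containing the interior of $\mathcal C_j$ is exactly $\mathcal B_{\nu_j}$, where $\nu_j$ is the set of $F_i$ contracted by $f_j$; equivalently $\overline{\mathcal B_\nu}=\overline{\bigcap_{i\notin\nu}\mathcal P_i\setminus\bigcup_{i\in\nu}\mathcal P_i}$ for every $\nu$.

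\emph{Verifying the hypothesis of Lemma \ref{lemma:polytopes}.} For $\mathcal R_I=\overline{\bigcup_{i\in I}\mathcal P_i\setminus\bigcup_{j\notin I}\mathcal P_j}$ one computes from the identification above that $\mathcal R_0=\overline{\mathcal B_{\{1,\dots,\ell\}}}$ and, in general, $\mathcal R_I=\overline{\bigcup_{\nu:\,I^c\subseteq\nu\subsetneq\{1,\dots,\ell\}}\mathcal B_\nu}$, a union of at most $2^\ell$ of the sets $\mathcal B_\nu$. Hence, once one knows that every adjacent-connected component $\mathcal K$ of such an $\mathcal R_I$ meets each $\mathcal B_\nu$ in at most one of \emph{its} adjacent-connected components, the hypothesis of the present lemma shows that $\mathcal K$ is a union of at most $2^\ell m$ polytopes $\mathcal C_i$, so Lemma \ref{lemma:polytopes} applies with its parameter ``$m$'' replaced by $2^\ell m$ and yields $k\le M(\ell,m)$. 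Proving that an adjacent-connected component of $\mathcal R_I$ cannot contain two distinct adjacent-connected components of a single $\mathcal B_\nu$ is the main obstacle: it relies on the monotonicity of the assignment $B\mapsto\nu(B):=\{i:F_i\subseteq\Bs(K_X+B)\}$ (it shrinks as the coefficients of $B$ grow) together with the convexity of the upper level sets $\{B:\nu(B)\subseteq\nu\}=\bigcap_{i\notin\nu}\mathcal P_i$, and is carried out exactly as the face-counting step in the proof of Lemma \ref{lemma:polytopes}; alternatively one may re-run that proof directly in the present, already monotone, situation.
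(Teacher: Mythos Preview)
Your overall route is the same as the paper's: set $\mathcal P_i=\{B:F_i\nsubseteq\Bs(K_X+B)\}$, check the upward-closure condition \eqref{eq:polytopes}, and invoke Lemma~\ref{lemma:polytopes}. The paper is terser: it simply asserts that $\mathcal B_\nu$ equals the closure of $\bigcup_{i\notin\nu}\mathcal P_i\setminus\bigcup_{j\in\nu}\mathcal P_j$ (i.e.\ $\mathcal R_{\nu^c}$) and concludes. You are more careful here, and you correctly notice that this identification is not literal: one actually has $\overline{\mathcal B_\nu}=\overline{\bigcap_{i\notin\nu}\mathcal P_i\setminus\bigcup_{i\in\nu}\mathcal P_i}$, whereas $\mathcal R_I=\overline{\bigcup_{\nu:\,I^c\subseteq\nu\subsetneq\{1,\dots,\ell\}}\mathcal B_\nu}$ is a union of several strata $\mathcal B_\nu$, not one. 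So passing from the hypothesis on the $\mathcal B_\nu$ to the hypothesis of Lemma~\ref{lemma:polytopes} on the $\mathcal R_I$ does require an extra argument, and your plan to absorb the discrepancy into a factor $2^\ell$ is reasonable.

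The gap is in your final step. You assert that an adjacent-connected component $\mathcal K$ of $\mathcal R_I$ can meet each $\mathcal B_\nu$ in at most one of \emph{its} adjacent-connected components, and you justify this only by saying it ``is carried out exactly as the face-counting step in the proof of Lemma~\ref{lemma:polytopes}'' or that one may ``re-run that proof directly''. Neither is an argument. Monotonicity of $B\mapsto\nu(B)$ and convexity of $\{B:\nu(B)\subseteq\nu\}=\bigcap_{i\notin\nu}\mathcal P_i$ do \emph{not} by themselves rule out two components of $\mathcal B_\nu$ being joined inside $\mathcal R_I$ through an intervening stratum $\mathcal B_{\nu'}$ with $\nu\subsetneq\nu'\subsetneq\{1,\dots,\ell\}$: the region you remove from the convex set $\bigcap_{i\notin\nu}\mathcal P_i$ to get $\mathcal B_\nu$ is $\bigcup_{i\in\nu}\mathcal P_i$, which is upward-closed but need not be convex, so it can disconnect $\mathcal B_\nu$ into pieces that are nonetheless adjacent through $\mathcal B_{\nu'}\subseteq\mathcal R_I$. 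If you want to rescue the argument you must either (a) actually re-run the induction of Lemma~\ref{lemma:polytopes} with the finer cells $\mathcal B_\nu$ in place of $\mathcal R_I$ --- which is plausible, since the proof of that lemma only uses the structure of $\mathcal R_0$ and the inclusion $(\mathcal P_i+\mathbb R_+^p)\cap\mathcal Q\subseteq\mathcal P_i$ --- or (b) give a direct bound on the number of adjacent-connected components of each $\mathcal B_\nu$ lying in a single component of $\mathcal R_I$. Either way, this is where the real work is, and it is not yet done.
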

\begin{proof}
By assumptions, for any $B\in \mathcal L_\varepsilon^{\can} (V)$, any prime divisor in $\Bs(K_X+B)$ must be one of $F_j$, and for each $1\leq i\leq\ell$ denote 
$$\mathcal P_i=\{B\in \mathcal L_\varepsilon (V)\mid F_i\nsubseteq\Bs(K_X+B)\}.$$
Then for any $\nu\subsetneq \{1,\dots,\ell\}$, the set $\mathcal B_\nu$ is the closure of $\bigcup_{i\notin \nu} \mathcal P_i \backslash \bigcup_{j\in \nu}\mathcal P_j$, and $\mathcal B_{\{1,\dots,\ell\}}$ is the closure of $\mathcal L_\varepsilon^{\can} (V)\setminus \bigcup_{i=1}^\ell \mathcal P_i$. It is clear that every $\mathcal P_i$ satisfies the relation \eqref{eq:polytopes} on page \pageref{eq:polytopes}, and we conclude by Lemma \ref{lemma:polytopes}.
\end{proof}

\begin{theorem}\label{l_disconnected}
Let $p$ and $\rho$ be positive integers, and let $\varepsilon$ be a positive rational number. Let $(X,\sum_{i=1}^p S_i)$ be a $3$-dimensional log smooth pair such that:
\begin{enumerate}
\item[(i)] $K_X$ is pseudoeffective;
\item[(ii)] $S_1,\dots,S_p$ are distinct prime divisor which are not contained in $\Bs(K_X+B)$ for all $B\in \mathcal L (V)$, 
\item[(iii)] the vector space $V=\sum_{i=1}^p\mathbb R S_i\subseteq \Div_{\mathbb R}(X)$ spans $\Div_{\mathbb R}(X)$ up to numerical equivalence,
\item[(iv)] $\rho(X)\le \rho$ and $\rho(S_i)\le \rho$ for all $i=1,\dots,p$.
\end{enumerate}
Let $I$ be the total number of irreducible components of intersections of each two and each three of the divisors $S_1,\dots,S_p$. 

Then there exists a constant $N=N(p,\rho,\varepsilon,I)$ such that the number of terminal chambers in $V$ which intersect the interior of $\mathcal L_\varepsilon(V)$ is at most $N$. 
\end{theorem}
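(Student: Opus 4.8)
The plan is to deduce the theorem from the combinatorial Lemma \ref{lemma:polytopesBase}, applied with ambient polytope $\mathcal L_\varepsilon^{\can}(V)$ and with the terminal chambers playing the role of the covering. For that I need two bounds: that the number $\ell$ of prime divisors contained in $\Bs(K_X)$ is at most $\rho$, and that every adjacent-connected component of every $\mathcal B_\nu$ meets at most $m$ terminal chambers, for some $m=m(p,\rho,\varepsilon,I)$. Granting both, Lemma \ref{lemma:polytopesBase} gives a bound $M(\ell,m)=N(p,\rho,\varepsilon,I)$ on the total number of terminal chambers in $V$, which a fortiori bounds those meeting the interior of $\mathcal L_\varepsilon(V)$. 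The first bound is immediate: by (i) and the assumed three-dimensional MMP, a $K_X$-MMP produces a minimal model $X\dashrightarrow X_{\min}$ whose contracted prime divisors are precisely the divisors in $\Bs(K_X)$, and there are at most $\rho(X)\le\rho$ of them. Before the second bound I would also note, using (iii), that there is a divisor $A_0=\sum a_iS_i\in V$ numerically equivalent to an ample one; hence $K_X+B$ is big for every $B$ in the interior of $\mathcal L_\varepsilon(V)$, and $\Bs_+(K_X+B)=\Bs(K_X+B-\eta A_0)$ for $0<\eta\ll1$, so that by (ii) we get $S_i\nsubseteq\Bs_+(K_X+B)$ there; thus Lemmas \ref{l_corner} and \ref{l_valuation} apply with the given $\varepsilon$.

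The substance of the proof is the bound on $m$. Fix an adjacent-connected component $\mathcal B$ of some $\mathcal B_\nu$. On $\mathcal B$ the divisorial part of $\Bs(K_X+B)$ is the fixed set $\sum_{i\in\nu}F_i$, so every log terminal model $Y_B$ of $(X,B)$ with $B\in\mathcal B$ contracts exactly these divisors and is therefore isomorphic in codimension one to the fixed threefold $X_\nu$ obtained by contracting them; in particular, two terminal chambers adjacent inside $\mathcal B$ are separated by a flop. The key step is to show that, as $B$ varies over $\mathcal B$, every curve flopped by the resulting wall-crossings has image on $X$ contained in the fixed finite union of curves
$$Z:=\bigcup_{j=1}^p\bigcup_{B\in\mathcal L_\varepsilon(V)}\big(\Bs_+(K_X+B)\cap S_j\big),$$
which by Lemma \ref{l_valuation} consists of at most $m_0=m_0(p,\rho,\varepsilon,I)$ curves. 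Indeed, for a shared wall $\mathcal F$ of two such chambers, $X\dashrightarrow Z_{\mathcal F}$ is the ample model of $K_X+B'$ for $B'$ in the relative interior of $\mathcal F$, so by Lemma \ref{l_bplus} the flopped curves lie over $\Bs_+(K_X+B')$; and arguing as in Lemmas \ref{l_corner} and \ref{l_valuation}, by restricting the MMP to the non-contracted surfaces $S_j$ via Lemmas \ref{lem:BCHM} and \ref{lem:canonical}, one shows that each such curve is in fact contained in some $S_j$, hence in $Z$.

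Granting this, for each flopped curve $\gamma$ the valuation $E_\gamma$ obtained by blowing it up has centre on $X$ contained in $Z$ and satisfies $0<a(E_\gamma,X,B)<1$, by the discrepancy computation as in \eqref{eq:discrep}. Lemma \ref{l_echo}, applied to $Z$, then bounds the number of such valuations, and the number of smooth blow-ups needed to realise them, by a constant $N'=N'(m_0,p,\varepsilon,I)$. Finally, the terminal chambers meeting $\mathcal B$ are adjacent-connected, so the corresponding models are all obtained from a fixed one by sequences of flops each of which contracts only curves with associated valuation in this bounded set $\mathcal V_Z$; the number of distinct $\mathbb Q$-factorial terminal threefolds arising this way is bounded in terms of $N'$ (the walls one ever crosses lie on at most $N'$ hyperplanes of $N^1$, whose arrangement has a bounded number of chambers), giving the desired $m=m(p,\rho,\varepsilon,I)$ and completing the argument via Lemma \ref{lemma:polytopesBase}.

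I expect the main obstacle to be precisely the claim that \emph{every} curve flopped over $\mathcal B$ has image inside the universally bounded set $Z\subseteq\bigcup_jS_j$. This requires a careful analysis — finer than that of Lemma \ref{l_corner} — of how one step of a $(K_X+B)$-MMP interacts with the surfaces $S_j$, in particular ruling out flopped curves meeting the $S_j$ at only finitely many points, and it is here that hypotheses (ii), (iii) and the restriction techniques of \cite{BCHM10} are essential. A secondary technical nuisance is that the divisorial contractions and flips in the $(K_X+B)$-MMP may a priori interleave in a $B$-dependent fashion, so one should either arrange the MMP so that the contractions of the divisors $F_i$, $i\in\nu$, come first, or work throughout directly with the base loci $\Bs(K_X+B)$ and $\Bs_+(K_X+B)$ rather than with a chosen MMP.
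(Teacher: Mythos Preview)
Your overall architecture matches the paper's proof: reduce via Lemma \ref{lemma:polytopesBase} to bounding the number of terminal chambers meeting each adjacent-connected component $\mathcal B$ of a single $\mathcal B_\nu$, bound the curves in $Z$ via Lemma \ref{l_valuation}, and bound the relevant valuations via Lemma \ref{l_echo}. The step you flag as the ``main obstacle'' is in fact not one. For a flipped curve $C\subseteq X''$ and $E$ the exceptional divisor of its blow-up, one has $a(E,X,B)<a(E,X'',B'')\le 1$ as in \eqref{eq:discrep}; since $(X,B)$ is log smooth and terminal, the discrepancy formula for smooth blow-ups immediately forces $c_X(E)\subseteq S_j$ for some $j$ --- no restriction-of-MMP argument via Lemmas \ref{lem:BCHM} and \ref{lem:canonical} is needed here. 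Together with $c_X(E)\subseteq\Bs_+(K_X+B)$ from Lemma \ref{l_bplus}, this already places $c_X(E)$ in $Z$. Your ``secondary nuisance'' is likewise a non-issue once one works, as the paper does, directly with the stratification by $\mathcal B_\nu$: within a fixed $\mathcal B_\nu$ all log terminal models contract exactly the divisors $F_i$ with $i\in\nu$, so adjacent chambers differ by a flip.

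The genuine gap is your last step. A bound on the set $\mathcal V_Z$ of valuations does \emph{not} by itself bound the number of terminal chambers: the same $E\in\mathcal V_Z$ can be realised as the blow-up of a flipped curve on many different models $X''$, so the assertion that ``the walls one ever crosses lie on at most $N'$ hyperplanes of $N^1$'' is unjustified (and the walls live in $V$, not in $N^1$ of any fixed model). The paper closes this gap with an ingredient you never invoke, Lemma \ref{lem:discrepbound}: for each model $X'$ and each relevant $E$ with $c_{X'}(E)$ a curve,
\[
a(E,X',B')=a(E,X',0)-\langle\Sigma_{E,\mathcal C'},\mathbf b\rangle,
\]
where $a(E,X',0)$ is a positive \emph{integer} bounded by $\mu:=\rho+M$ (with $M$ the bound from Lemma \ref{l_echo} on the number of blow-ups needed to realise all of $\mathcal V_Z$), and $\Sigma_{E,\mathcal C'}=(\mult_E S_1',\dots,\mult_E S_p')\in\mathbb N^p$ has all entries $<\mu/\varepsilon$. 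Thus the equation $a(E,X',B')=a(E,X'',B'')$ that cuts out a wall $\mathcal C'\cap\mathcal C''$ belongs to a bounded explicit hyperplane arrangement $\mathcal H$ in $V$, depending only on $p,\rho,\varepsilon,I$. One then subdivides $\mathcal B$ by $\mathcal H$ into at most $2^{\#\mathcal H}$ cells and shows that each cell meets at most one terminal chamber: if two adjacent chambers met the interior of a cell, then on the wall the discrepancies agree, forcing $\Sigma_{E,\mathcal C'}=\Sigma_{E,\mathcal C''}$; but for $B$ in the interior of $\mathcal C''$ the Negativity lemma gives $a(E,X',B')<a(E,X'',B'')=1-\langle\Sigma_{E,\mathcal C''},\mathbf b\rangle$, whence $a(E,X',0)<1$, contradicting its being a positive integer. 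This integrality-plus-hyperplane-arrangement mechanism is precisely what converts the valuation bound into the required chamber bound $m$, and it is missing from your outline.
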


\begin{proof}
Let $(X,\sum_{i=1}^p S_i)$ be a 3-dimensional log smooth pair satisfying the conditions (i)--(iv). Note that $K_X+B$ is big for every $B\in\mathcal L_\varepsilon(V)$. Let $C_1,\dots, C_q$ be all the curves contained in 
$$\bigcup_{B\in \mathcal L_\varepsilon (V)}\Bs_+(K_X+B)\cap S.$$
Then $q$ is bounded by a constant depending on $p$, $\rho$, $\varepsilon$ and $I$ by Lemma \ref{l_valuation}. By Lemma \ref{l_echo}, there are finitely many geometric valuations $E_1,\dots,E_m$ such that $c_X(E_j)\subseteq\bigcup_{i=1}^q C_i$ for all $j$ and $a(E_j,X,B)<1$ for some $B\in \mathcal L_\varepsilon (V)$, and $m\leq M=M(q,\rho,\varepsilon,I)$. 

Let $F_1,\dots,F_\ell$ be all the prime divisors in $\Bs(K_X)$. Then by (ii), for every $B\in\mathcal L_\varepsilon(V)$ the divisorial part of $\Bs(K_X+B)$ is contained in $\sum F_i$. Let $f=f_B\colon \rmap X.X_B.$ be a log terminal model of $(X,B)$. For every $\nu\subseteq\{1,\dots,\ell\}$, let 
$$\mathcal B_\nu=\{B\in\mathcal L_\varepsilon(V)\mid F_i\text{ is contracted by }f_B\text{ if and only if }i\in\nu\}.$$
Then by Lemma \ref{lemma:polytopesBase}, it is enough to bound the number of terminal chambers which intersect each adjacent-connected component of each $\mathcal B_\nu$. 

Hence, from now on we fix such $\nu$ and we assume, as we may, that each $\mathcal B_\nu$ is adjacent-connected. We will show that the number of terminal chambers which intersect $B_\nu$ is bounded by a constant depending only on $p$, $\rho$ and $\varepsilon$, which is enough to conclude.

Set $\mu=\rho+M$; then $\mu$ depends only on $p,\rho$, $\varepsilon$ and $I$ by above. Let $\mathcal{S}$ be the set of all $p$-tuples $(m_1,\dots,m_p)\in\mathbb N^p$ such that $m_i<\mu/\varepsilon$ for every $i$. Then $\#\mathcal S<(\mu/\varepsilon)^p$. Let $\mathcal H$ be the set of all hyperplanes $\langle\Sigma_1-\Sigma_2,\mathbf x\rangle=r$, where $\Sigma_1\neq \Sigma_2$ are elements of $\mathcal S$ and $-\mu<r<\mu$ is an integer. Then $\#\mathcal H\leq2\mu\binom{(\mu/\varepsilon)^p}{2}$. The elements of $\mathcal H$ subdivide $\mathcal B_\nu$ into at most $2^{\#\mathcal H}$ polytopes, and by replacing $\mathcal B_\nu$ by any of these polytopes, we may assume that none of the elements of $\mathcal H$ intersects the interior of $\mathcal{B}_\nu$. It is now enough to show that there is exactly one terminal chamber whose interior intersects $\mathcal B_\nu$.

Assume that there are two adjacent terminal chambers $\mathcal C'$ and $\mathcal C''$ whose interiors intersect $\mathcal B_\nu$. Let $X'$ and $X''$ be the corresponding log terminal models, let $B=\sum_{i=1}^p b_i S_i$ be a divisor in $\mathcal C''$, and let $B'$ and $S_i'$, respectively $B''$ and $S_i''$ be the proper transforms of $B$ and $S_i$ on $X'$ and $X''$. Note that $X'$ and $X''$ are terminal by Lemma \ref{l_canonical}. Denote $\mathbf b=(b_1,\dots,b_p)$ and let $\langle\, ,\rangle$ denote the standard scalar product on $V$. For each geometric valuation $E$ on $X$, define 
$$\Sigma_{E,\mathcal C'}= (\mult_E S_1',\dots,\mult_E S_p'),\quad \Sigma_{E,\mathcal C''}= (\mult_E S_1'',\dots,\mult_E S_p'').$$

By the definition of $\mathcal B_\nu$, and possibly by relabelling the chambers, we may assume that the induced map $\rmap X'.X''.$ is the flip of $(X',B')$. Note that $X'$ is the ample model of $(X,\Delta)$ for any $\Delta$ in the interior of $\mathcal C'$, and similarly for $\mathcal C''$. Let $C\subseteq X''$ be a flipped curve, and let $E$ be the valuation on $X''$ obtained by blowing up $C$ which dominates $C$. Since $X''$ is smooth at the generic point of $C$ by \cite[Corollary 5.39]{KM98}, we have
\begin{equation}\label{eq:a2}
0<a(E,X,B)<a(E,X'',B'')=1-\langle\Sigma_{E,\mathcal{C}''},\mathbf b\rangle\leq1.
\end{equation}
It is easy to see from the discrepancy formulas that then $c_X(E)$ belongs to some of the divisors $S_1,\dots,S_p$ since $(X,B)$ is terminal and $a(E,X,B)<1$. Moreover, if $B$ belongs to the interior of $\mathcal C''$, then $X''=\Proj R(X,K_X+B)$, and hence $c_X(E)\subseteq \Bs_+(K_X+B)$ by Lemma \ref{l_bplus}. This shows that $E$ is one of the valuations $E_1,\dots,E_m$. 

Furthermore, by Lemma \ref{lem:discrepbound} we have 
\begin{equation}\label{eq:a1}
0<a(E,X',B')=\mu_{E,B}-\langle\Sigma_{E,\mathcal{C}'},\mathbf b\rangle
\end{equation}
for some integer $0<\mu_{E,B}<\mu$. Since $b_i\geq\varepsilon$ for all $i$, we have $0\leq\mult_{E} S_i'<\mu/\varepsilon$ for all $i$, and in particular, $\Sigma_{E,\mathcal C'}\in\mathcal S$. 

Now, if $B\in\mathcal C'\cap \mathcal C''$, then by the Negativity lemma we have $a(E,X',B')=a(E,X'',B'')$. Together with \eqref{eq:a2}, \eqref{eq:a1} and the fact that none of the elements $\mathcal H$ intersects the interior of $\mathcal B_\nu$, this implies that $\Sigma_{E,\mathcal C'}=\Sigma_{E,\mathcal C''}$. On the other hand, if $B$ belongs to the interior of $\mathcal C''$, then the Negativity lemma again gives
$$a(E,X',B')< a(E,X'',B''),$$ 
and this together with \eqref{eq:a2} and \eqref{eq:a1} implies $\mu_{E,B}<1$, a contradiction. 
\end{proof}

We are now ready to give proofs of our main results.

\begin{proof}[Proof of Theorem \ref{t_main}]
The number of terminal chambers inside of the set 
$$\{\sum a_iS_i\mid a_i\in [\varepsilon/2,1-\varepsilon/2]\}$$
is bounded by a constant $N=N(p,\rho,\varepsilon/2)$ by Theorem \ref{l_disconnected}. We set $C=N$.
\end{proof}

\begin{proof}[Proof of Corollary \ref{corollary}]
It is clear that the total number of irreducible components of intersections of each two and each three of the components of $\Delta_0$ and $\Delta$ is the same under a homeomorphism which preserves the topological type of $(X,\Delta_0)$. Therefore, the result follows immediately from Theorem \ref{l_disconnected}.
\end{proof}

\bibliographystyle{amsalpha}

\bibliography{Library}

\end{document}